\newcommand{\ol}{\overline}
\newcommand{\ul}{\underline}
\newtheorem{theorem}{Theorem}[section]
\newtheorem{corollary}[theorem]{Corollary}
\newtheorem{lemma}[theorem]{Lemma}
\newtheorem{proposition}[theorem]{Proposition}
\newtheorem{question}[theorem]{Question}
\newtheorem{remark}[theorem]{Remark}
\theoremstyle{definition}
\newtheorem{definition}[theorem]{Definition}
\newtheorem{example}[theorem]{Example}
\newcommand{\N}{\mathbb{N}}
\newcommand{\R}{\mathbb{R}}
\newcommand{\F}{\mathbb{F}}
\newcommand{\T}{\mathbb{T}}
\newcommand{\cB}{\mathcal{B}}
\newcommand{\cC}{\mathcal{C}}
\newcommand{\cG}{\mathcal{G}}
\newcommand{\cR}{\mathrel{\mathcal{R}}}
\newcommand{\cL}{\mathrel{\mathcal{L}}}
\newcommand{\cH}{\mathrel{\mathcal{H}}}
\newcommand{\cD}{\mathrel{\mathcal{D}}}
\newcommand{\cJ}{\mathrel{\mathcal{J}}}
\newcommand{\cS}{\mathcal{S}}
\newcommand{\cZ}{\mathcal{Z}}
\title[Sch\"utzenberger groups of tropical matrices]{The Sch\"utzenberger groups and maximal subgroups of tropical matrices}
\begin{document}
\keywords{tropical matrices, maximal subgroups, Sch\"utzenberger groups}
\maketitle
\begin{center}
THOMAS AIRD\footnote{Email \texttt{Thomas.Aird@manchester.ac.uk}.}
\\ \ \\
Department of Mathematics, University of Manchester, \\
Manchester M13 9PL, UK. \\
Heilbronn Institute for Mathematical Research, Bristol, UK.
\end{center}
\begin{abstract}
    We classify the Sch\"utzenberger groups of the category of matrices over the tropical semiring, $M(\T)$, in doing so, we obtain a classification for the Sch\"utzenberger groups of the semigroupoid of matrices over the finitary tropical semiring, $M(\F\T)$.
    We then classify the maximal subgroups of the monoid of $n \times n$ matrices over the tropical semiring, $M_n(\T)$, for all $n \in \N$; generalising a result in the literature and correcting an erroneous proof. We proceed to show that for some $n \in \N$ there exists a group which appears as a Sch\"utzenberger group of $M_n(\T)$ but does not appear as a maximal subgroup.
\end{abstract}
\section{Introduction} 
The tropical semiring, $\T$, is the set of real numbers with a $-\infty$ element adjoined under the operations of maximum and addition. Research into tropical mathematics has received significant interest, with applications to many areas, including algebraic geometry \cite{MSGeometry}, optimisation \cite{BMaxLinear}, and combinatorics \cite{JCombinatorics}.
In recent years, there has been much interest in identifying which semigroups can be faithfully represented by matrices over the tropical semiring, with a particular focus on semigroups that can be represented by the submonoid of upper triangular matrices \cite{ARStylic,CJKMPlacticLikeRep,JKPlacticRep}. In addition to this, there has also been a concerted effort to further understand the semigroup theoretic properties of subsemigroups of the monoid of $n \times n$ matrices over the tropical semiring, $M_n(\T)$, \cite{GJNMatrixOverSemirings, GIMMUltimate,IJKDimPolytopes,STropical}.

\par Initially, the focus was on classifying the Green's relations of $M_n(\T)$ and its subsemigroups \cite{HKDuality,JKGreensJ}; for example, up to isomorphism, the maximal subgroups of $M_n(\F\T)$ were fully classified in \cite{IJKTropicalGroups}, where $\F\T = \T \setminus \{-\infty\}$ is the \emph{finitary tropical semiring}. However, there is a flaw in the proof of this result that is explained and corrected (via Theorem~\ref{thm:MaximalSubgroupsClassification}) in Section~\ref{sec:GroupCase}.

\par Rather than working with the monoid $M_n(\T)$ of $n \times n$ matrices, it is sometimes useful to study $M(\T)$, the category of all matrices of any size over $\T$, where multiplication is defined for matrices with compatible sizes.  
\par In this paper, we study the structure of $M(\T)$ as a category, focusing on the Sch\"utzenberger groups and the maximal subgroups of $M(\T)$. These groups are crucial to understanding the structure of semigroups, with Sch\"utzenberger groups acting as analogues of maximal subgroups in non-regular $\cD$-classes. 
The two main results of this paper are a complete classification of the Sch\"utzenberger groups and the maximal subgroups of $M(\T)$. From these classifications, we obtain the corresponding classifications for the semigroupoid $M(\F\T)$ and for the semigroups $M_n(\T)$ and $M_n(\F\T)$, thereby reproving the main result of \cite{IJKTropicalGroups}.

\par Including this introduction, this paper consists of seven sections. Section~\ref{sec:prelim} gives the required preliminaries, giving a background into tropical mathematics and the semigroupoid analogue to some classical semigroup theory results. In Section~\ref{sec:Schutz}, we prove that each Sch\"utzenberger group of $M(\T)$ is isomorphic to the automorphism group of the column space of any matrix in the corresponding $\cH$-class. 
In Section~\ref{sec:FullRank}, we show that each Sch\"utzenberger group embeds in the group of units of $M_n(\T)$ for some $n \in \N$, and show that we can reduce the problem to the submatrices corresponding to connected components in a bipartite graph formed from the matrix. 
In Section~\ref{sec:Eigenvalues}, we study the eigenvalues of the unit matrices corresponding to elements of Sch\"utzenberger groups to give a more detailed description of the groups, and in Section \ref{sec:SchuzClassification} we give the complete classification of the Sch\"utzenberger groups of $M(\T)$, and $M(\F\T)$. Finally, in Section~\ref{sec:GroupCase}, we discuss two errors in \cite{IJKTropicalGroups} providing a corrected proof of \cite[Theorem 5.10]{IJKTropicalGroups}, and a correction to the remark following Corollary~5.11 in \cite{IJKTropicalGroups}. Moreover, while correcting the proof of \cite[Theorem 5.10]{IJKTropicalGroups}, we generalise the result by completely classifying the maximal subgroup of $M_n(\T)$.

\par \textbf{Acknowledgements.} This work was supported by the Heilbronn Institute for Mathematical Research. The author thanks Mahah Javed for their help with \textsc{Magma} \cite{Magma}. 

\section{Preliminaries} \label{sec:prelim}
Let $\N$ denote the set of positive integers. For a finite set $\Omega$, let $\cS_\Omega$ denote the symmetric group on $\Omega$; we write $\cS_n$ when $\Omega = \{1,\dots,n\}$.
\subsection{Categories and semigroupoids}
Throughout, we refer to categories by their collection of arrows, that is, we denote a category with objects $S_0$ and arrows $S$, by $S$. 
A \emph{semigroupoid} is a category where it is not required that every object has an identity map, and we similarly refer to them by their collection of arrows.
\par Although, in this paper, we mainly work with the category $M(\T)$, we sometimes discuss the semigroupoid $M(\F\T)$, as such, we present the results in this section in the more general case of semigroupoids.

\par Given a semigroupoid $S$, let $S^1$ be the category obtained by adjoining an identity arrow to each object which does not already have an identity arrow. Remark that $S^1 = S$ for any category $S$. We can now define Green's relations for semigroupoids.
\begin{definition}
Let $S$ be a semigroupoid. For $a,b \in S$, we say that:
\begin{enumerate}[(i)]
\item $a \cR b$ if $aS^1 = bS^1$,
\item $a \cL b$ if $S^1a = S^1b$,
\item $a \cJ b$ if $S^1aS^1 = S^1bS^1$,
\item $a \cH b$ if $a \cL b$ and $a \cR b$,
\item $a \cD b$ if $a \cL z \cR b$ for some $z \in S^1$.
\end{enumerate}
\end{definition}
For $a \in S$, we let $H_a$ denote the $\cH$-class containing $a$.

\begin{definition}
Let $H$ be a $\cH$-class of a semigroupoid $S$. Suppose $s \in S^1$ such that $sh$ is defined for all $h \in H$ and $sH \subseteq H$, then, we define
\[ \phi_s: H \rightarrow H, \ a \mapsto sa 
\text{ and } \Gamma_H = \{\phi_s \ | \ s \in S^1, sH \subseteq H \}, \]
and call $\Gamma_H$ the \emph{(left) Sch\"utzenberger group of} $H$. Similarly, we define the \emph{right Sch\"utzenberger group of} $H$, ${}_H\Gamma$, to be
\[ {}_H\Gamma = \{\varphi_s: H \rightarrow H, \ a \mapsto as \ | \ s \in S^1, Hs \subseteq H \}. \]
\end{definition}

\begin{proposition} \label{SchutzProps}
    Let $H$ be a $\cH$-class of a semigroupoid $S$. Then, the Sch\"utzenberger group $\Gamma_H$
    \begin{enumerate}[(i)]
        \item is a group acting on $H$ by permutations;
        \item acts simply transitively on $H$, that is, for all $x,y \in H$ there exists a unique $g \in \Gamma_H$ with $gx = y$;
        \item is isomorphic to ${}_H\Gamma$ as an abstract groups;
        \item is isomorphic to $\Gamma_{H'}$ as a permutation group if $H'$ is a $\cH$-class in the same $\cD$-class as $H$;
        \item is isomorphic to $H$ as an abstract group if $H$ is a maximal subgroup of $S$.
    \end{enumerate}
\end{proposition}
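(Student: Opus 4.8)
The plan is to adapt the classical theory of Sch\"utzenberger groups to the semigroupoid setting, leaning on the semigroupoid analogues of Green's lemmas recorded in the preliminaries. The organising observation is that all elements of a single $\cH$-class $H$ are parallel arrows: $a \cR b$ forces a common target and $a \cL b$ a common source, so every element of $H$ has the same source $p$ and target $q$. Consequently the stabilising elements $s$ with $sH \subseteq H$ are loops at $q$, the right-stabilisers are loops at $p$, and every product appearing below is automatically defined. Modulo this bookkeeping the semigroup proofs carry over, and the argument rests on two elementary facts.

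The first is \emph{rigidity}: if $s,s' \in S^1$ with $sH, s'H \subseteq H$ agree at one point $a \in H$, then $\phi_s = \phi_{s'}$, since any $b \in H$ satisfies $b \cR a$, so $b = au$ and $sb = sau = s'au = s'b$. The second is \emph{transitivity}: for $x,y \in H$ we have $x \cL y$, so $y = sx$ for some $s \in S^1$, and using that $\cR$ is a left and $\cL$ a right congruence one checks $sH \subseteq H$, so $\phi_s \in \Gamma_H$ carries $x$ to $y$. Since $\phi_{ss'} = \phi_s \circ \phi_{s'}$ and $\phi_1 = \mathrm{id}_H$, the set $\Gamma_H$ is a composition-closed monoid of self-maps of $H$ containing $\mathrm{id}_H$; transitivity supplies, for each $\phi_s$, a map $\phi_{s'}$ with $\phi_{s'}\phi_s$ agreeing with $\mathrm{id}_H$ at a point, whence $\phi_{s'} = \phi_s^{-1}$ by rigidity. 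This makes every $\phi_s$ a permutation and $\Gamma_H$ a group, proving (i); and transitivity together with the uniqueness from rigidity gives the simply transitive action of (ii), so in particular $|\Gamma_H| = |H|$.

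For (iii) the key point is that left and right translations commute by associativity, $\phi_s\varphi_t(a) = s(at) = (sa)t = \varphi_t\phi_s(a)$, so ${}_H\Gamma$ lies in the centraliser of $\Gamma_H$ inside $\mathrm{Sym}(H)$; as both groups act regularly, the standard fact that the centraliser of a regular permutation group is regular and anti-isomorphic to it --- hence abstractly isomorphic, since a group is isomorphic to its opposite --- yields $\Gamma_H \cong {}_H\Gamma$. For (iv) I would connect $H$ and $H'$ by the Green's-lemma translations, writing a $\cD$-equivalence as an $\cR$-step followed by an $\cL$-step. If $H \cR H'$, right translation $\beta = \rho_u\colon x \mapsto xu$ is a bijection $H \to H'$ commuting with every left translation, so it intertwines $\Gamma_H$ with $\Gamma_{H'}$ via $\phi_s \mapsto \phi_s$. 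If $H \cL H'$, left translation $\beta = \lambda_u\colon x \mapsto ux$ is a bijection $H \to H'$, and conjugation sends $\phi_s$ to $y \mapsto usu'y$, which one checks stabilises $H'$; thus $\beta\Gamma_H\beta^{-1} = \Gamma_{H'}$ in both cases, giving the required isomorphism of permutation groups.

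Finally, for (v), if $H$ is a maximal subgroup with identity $e$ then $hH = H$ for every $h \in H$, so $h \mapsto \phi_h$ is a homomorphism $H \to \Gamma_H$; it is injective since $\phi_h(e) = he = h$, and surjective since any $\phi_s \in \Gamma_H$ agrees with $\phi_{se}$ at $e$ and so equals it by rigidity, with $se \in H$. I expect the main obstacle to be not any single deep step but the systematic verification that these classical arguments survive in a semigroupoid, i.e.\ that every composite written down is defined (guaranteed by the parallel-arrow observation) and that the needed semigroupoid forms of Green's lemma and of the congruence properties of $\cR$ and $\cL$ hold. The most delicate individual point is the $\cL$-case of (iv): unlike the $\cR$-case, the connecting translation does not commute with the left Sch\"utzenberger action, so one must check by direct computation that conjugation returns left multiplications stabilising $H'$ and hence carries $\Gamma_H$ onto $\Gamma_{H'}$.
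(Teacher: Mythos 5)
Your proof is correct, but it takes a genuinely different route from the paper. The paper disposes of the whole proposition in three lines: it forms the semigroup $T = S \cup \{0\}$ in which undefined products are sent to $0$, observes that the Green's relations of $T$ restrict to those of $S$ (up to the extra $\cJ$-class $\{0\}$), and then simply cites the classical semigroup results of Clifford and Preston. You instead re-derive the classical theory inside the semigroupoid, and the one genuinely new ingredient you supply --- that all elements of an $\cH$-class are parallel arrows, so the stabilisers are loops at a fixed object and every composite in the classical arguments is automatically defined --- is exactly the right observation to make that re-derivation legitimate; your rigidity/transitivity bookkeeping, the centraliser-of-a-regular-group argument for \textit{(iii)}, the $\cR$-step/$\cL$-step decomposition for \textit{(iv)}, and the evaluation-at-$e$ argument for \textit{(v)} are all the standard proofs and go through verbatim once definedness is secured. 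What the paper's reduction buys is brevity and a clean transfer of the entire classical package at once (including the congruence properties of $\cR$ and $\cL$ and Green's lemma, which you must quietly re-verify in the semigroupoid before using them); what your approach buys is self-containedness and an explicit explanation of \emph{why} the semigroup arguments survive, at the cost of considerably more verification. Both are valid; if you wanted the shortest complete write-up you could even combine them, using your parallel-arrow observation only to check that the zero-adjunction preserves Green's relations.
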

\begin{proof}
    Let $(T,\cdot)$ be the semigroup with underlying set $S \cup \{0\}$ where 0 is a multiplicative zero and for $x,y \in S$, $x \cdot y = xy$ if defined in $S$, and equal to 0 otherwise. 
    Note that $T$ has the same greens relations as $S$ apart from the addition of a $\cJ$-class containing only $0$, so as the proposition holds for semigroups by \cite[Section 2.4]{CPSemigroups}, we are done.
\end{proof}
Note that the dual to the above proposition holds for right Sch\"utzenberger groups.

\subsection{Tropical mathematics}
\par Recall that $\T = (\R \cup \{-\infty\}, \oplus, \otimes)$ is called the \emph{tropical semiring}, where $\oplus$ denotes maximum and $\otimes$ denotes addition. For $n \in \N$, we call $\T^n$ \emph{(affine) tropical $n$-space}, that is, the space consisting of $n$-tuples $x$ of $\T$; we write $x_i$ for the $i$th component of $x$. We define addition and scaling for $\T^n$ by $(x \oplus y)_i = x_i \oplus y_i$ and $(\lambda \otimes x)_i = \lambda \otimes x_i$ respectively. This gives $\T^n$ the structure of a $\T$-semimodule.

\par Let $M(\T)$ denote the set of all matrices of any size with entries in $\T$, and $M_{r \times c}(\T) \subseteq M(\T)$ denote the subset of matrices with $r$ rows and $c$ columns.
The operations $\oplus$ and $\otimes$ from $\T$ induce operations on $M(\T)$ when the addition and the product of two matrices are defined respectively. This gives $M(\T)$ the structure of the arrows of a category, where the objects are given by $\T^n$ for all $n \in \N$. We let $M_n(\T) \subseteq M(\T)$ denote the monoid of $n \times n$ matrices over $\T$.
Note that, there is a natural left action of $M(\T)$ on $\T^n$ by treating elements of $\T^n$ as column vectors and acting elements of $M(\T)$ by tropical matrix multiplication if defined. Similarly, there is a right action of $M(\T)$ on $\T^n$ by treating $\T^n$ as row vectors.

\par Subsemimodules of $\T^n$, that is, subsets closed under tropical addition and scaling are called \emph{tropical convex sets}, if a tropical convex set is \emph{finitely generated}, then it is called a \emph{tropical polytope}. 
\par For each $A \in M_{n \times m}(\T)$, let $C(A)$ be the \emph{column space} of $A$, that is, the tropical polytope in $\T^n$ generated by the columns of $A$. Similarly, we let $R(A)$ be the \emph{row space} of $A$, the tropical polytope in $\T^m$ generated by the rows of $A$.

\section{Green's relations and Sch\"utzenberger groups} \label{sec:Schutz}
In this section, we show that for each $\cH$-class of $M(\T)$, $H$, the Sch\"utzenberger group of $H$ is isomorphic to the automorphism group of the column space of any matrix in $H$.

\begin{proposition} \label{prop:Greens}
For all $A,B \in M(\T)$:
\begin{enumerate}[(i)]
\item $A \cR B$ if and only if $C(A) = C(B)$;
\item $A \cL B$ if and only if $R(A) = R(B)$;
\item $A \cH B$ if and only if $R(A) = R(B)$ and $C(A) = C(B)$;
\item $A \cD B$ if and only if $C(A)$ and $C(B)$ are isomorphic as $\T$-semimodules; 
\item $A \cD B$ if and only if $R(A)$ and $R(B)$ are isomorphic as $\T$-semimodules.
\end{enumerate}
\end{proposition}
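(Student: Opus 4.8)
The plan is to establish (i) and (ii) directly from how column and row spaces behave under one-sided multiplication, to read off (iii) as their conjunction, and to derive (iv) and (v) from the tropical row--column duality.

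For (i), the crucial observation is that every column of a product $AX$ is a tropical linear combination of the columns of $A$, so that $C(AX) \subseteq C(A)$; that is, right multiplication can only shrink the column space. Hence if $A \cR B$, then unwinding $AS^1 = BS^1$ gives $B = AP$ and $A = BQ$ for suitable matrices $P, Q$ (the adjoined identities covering the degenerate case $A = B$), whence $C(B) \subseteq C(A) \subseteq C(B)$ and so $C(A) = C(B)$. Conversely, if $C(A) = C(B)$ then $A$ and $B$ have the same number of rows and each column of $B$, lying in $C(A)$, is a tropical combination of the columns of $A$; assembling the coefficients into a matrix $P$ yields $B = AP$, and symmetrically $A = BQ$, so $A \cR B$. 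Part (ii) is the exact transpose-dual, using $R(XA) \subseteq R(A)$ in place of $C(AX) \subseteq C(A)$, and (iii) follows at once because $A \cH B$ means simultaneously $A \cR B$ and $A \cL B$.

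For (iv) and (v) I would invoke the tropical duality theorem \cite{HKDuality}: for every matrix $M$ the spaces $R(M)$ and $C(M)$ are isomorphic as $\T$-semimodules. Since $C(A) \cong R(A)$ and $C(B) \cong R(B)$ always hold, the conditions $C(A) \cong C(B)$ and $R(A) \cong R(B)$ are equivalent, so it suffices to treat (iv). Unwinding the definition of $\cD$, we have $A \cD B$ precisely when there is a matrix $z$ with $R(z) = R(A)$ and $C(z) = C(B)$, that is, with $A \cL z \cR B$. The forward implication is then immediate: given such a $z$, duality gives $C(A) \cong R(A) = R(z) \cong C(z) = C(B)$.

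The converse is the heart of the matter and the step I expect to be the main obstacle: from a merely abstract isomorphism $C(A) \cong C(B)$ I must manufacture a concrete connecting matrix $z$ with $R(z) = R(A)$ and $C(z) = C(B)$. Composing the given isomorphism with the duality isomorphism $R(A) \cong C(A)$ produces an isomorphism $\psi \colon R(A) \to C(B)$, and the goal becomes to realise $\psi$ by a matrix. Here the categorical bookkeeping is benign rather than obstructive: if $A$ has $c$ columns and $B$ has $s$ rows then $R(A) \subseteq \T^{c}$ and $C(B) \subseteq \T^{s}$, which forces $z$ to be an $s \times c$ matrix, exactly the shape whose rows live in $\T^{c}$ and columns in $\T^{s}$. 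The genuine difficulty is showing that $\psi$ is induced by a matrix at all; this is precisely the nontrivial direction of the Hollings--Kambites $\cD$-theorem, adapted from $M_n(\T)$ to the category, with the entries of $z$ recovered from $\psi$ through the residuation pairing $\langle x, y \rangle = \bigoplus_i (x_i \otimes y_i)$ on tropical space. Verifying that the matrix so produced has row space exactly $R(A)$ and column space exactly $C(B)$ (and that it realises $\psi$) is the technical core; once it is in hand, $A \cL z \cR B$ gives $A \cD B$ and completes both (iv) and (v).
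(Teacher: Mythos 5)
Your treatment of (i)--(iii) is correct and is exactly the standard argument (the paper simply cites the analogous \cite[Proposition 4.1]{HKDuality} and lets it generalise from $M_n(\T)$ to $M(\T)$); likewise, deferring the hard converse of (iv)--(v) to the Hollings--Kambites $\cD$-theorem is what the paper itself does, via \cite[Theorem 6.1]{HKDuality}. The genuine problem is your statement of the duality theorem: it is \emph{not} true that $R(M)$ and $C(M)$ are isomorphic as $\T$-semimodules for every $M$. The duality of Cohen--Gaubert--Quadrat/Develin--Sturmfels/Hollings--Kambites gives an \emph{anti}-isomorphism between $R(M)$ and $C(M)$ (an order-reversing bijection under which scaling by $\lambda$ corresponds to scaling by $-\lambda$), which is not a semimodule isomorphism. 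Indeed the claim is refutable by an isomorphism invariant: the minimal number of generators of $C(M)$ is the column rank and that of $R(M)$ is the row rank, and these can differ for tropical matrices --- a tropical polygon in $\T^3$ can have arbitrarily many extremal generators, so a $3\times 5$ matrix whose columns are the five vertices of a tropical pentagon has column rank $5$ but row rank at most $3$. The paper's own full-rank rectangular matrices (e.g.\ the $4\times 12$ matrix in the final corollary, or those produced by Lemma~\ref{lem:SchuzOneDirection} with $r\neq c$) are further counterexamples, and the fact that the paper states Corollary~\ref{cor:ColSameRowSame} and Corollary~\ref{cor:CRSameAut} as results requiring proof --- both would be trivial if $C(A)\cong R(A)$ held --- should have been a warning sign.

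Fortunately both places where you use the false claim can be repaired, because you always use it an even number of times. For the equivalence of the hypotheses in (iv) and (v): an anti-isomorphism $R(A)\to C(A)$, followed by an isomorphism $C(A)\to C(B)$, followed by an anti-isomorphism $C(B)\to R(B)$, composes to a genuine isomorphism $R(A)\cong R(B)$, and symmetrically. For the forward implication of (iv): from $A \cL z \cR B$ you get $C(A)$ anti-isomorphic to $R(A)=R(z)$, which is anti-isomorphic to $C(z)=C(B)$, and again the composite of the two anti-isomorphisms is an isomorphism $C(A)\cong C(B)$. This is precisely how \cite{HKDuality} runs the argument. With that correction, and granting the cited theorem for the converse direction (realising an abstract isomorphism $C(A)\cong C(B)$ by a connecting matrix $z$ via residuation, which you rightly identify as the technical core and which the paper also does not reprove), your proof is sound and follows the same route as the paper's.
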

\begin{proof}
\textit{(i--iii)} follow from a simple generalisation of the proof of \cite[Proposition 4.1]{HKDuality} and \textit{(iv)} and \textit{(v)} follow from the proof of \cite[Theorem 6.1]{HKDuality}.
\end{proof}

From the above proposition, we obtain the following two corollaries. 
\begin{corollary} \label{cor:hclasses}
Suppose $A \in M(\T)$ is $\cH$-related to $B \in M_{n \times m}(\T)$, then $A \in M_{n \times m}(\T)$.
\end{corollary}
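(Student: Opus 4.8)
The plan is to read this off directly from Proposition~\ref{prop:Greens}(iii), using only that a tropical polytope remembers the dimension of the ambient space it lives in. Since $A \cH B$, Proposition~\ref{prop:Greens}(iii) gives $C(A) = C(B)$ and $R(A) = R(B)$, where each equality is an equality of \emph{subsets} of a tropical space.

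First I would pin down the number of rows of $A$. Write $A \in M_{r \times c}(\T)$; then $C(A)$ is by definition a tropical polytope in $\T^r$, whereas $C(B)$ is a tropical polytope in $\T^n$. Because dimensions lie in $\N$, the matrix $A$ has at least one column, so $C(A)$ is nonempty; choose any $v \in C(A) = C(B)$. Viewed as an element of $C(A)$ the vector $v$ is a tuple in $\T^r$, while viewed as an element of $C(B)$ it is a tuple in $\T^n$. Since a tuple has a well-defined number of components, this forces $r = n$. An identical argument applied to the row spaces $R(A) \subseteq \T^c$ and $R(B) \subseteq \T^m$, using that $B$ has at least one row so that $R(B)$ is nonempty, yields $c = m$. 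Hence $A \in M_{n \times m}(\T)$.

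I do not expect any real obstacle here: the result is essentially immediate once one has Proposition~\ref{prop:Greens}(iii). The only point deserving a word of care is the nonemptiness of the column and row spaces, which is exactly what lets us compare ambient dimensions; this is guaranteed because every matrix in play has at least one row and at least one column. Everything else reduces to the observation that $C(A) = C(B)$ (respectively $R(A) = R(B)$) cannot hold unless the two polytopes sit in the same ambient space.
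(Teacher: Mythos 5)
Your argument is correct and matches the paper's intent: the corollary is stated there as an immediate consequence of Proposition~\ref{prop:Greens}, with no further proof given, and your route via part (iii) and comparison of the ambient spaces $\T^r$ versus $\T^n$ (together with the nonemptiness observation that makes the comparison legitimate) is exactly the reading the paper expects. No gaps.
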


\begin{corollary} \label{cor:ColSameRowSame} 
Let $A,B \in M(\T)$. Then, $C(A) \cong C(B)$ if and only if $R(A) \cong R(B)$.
\end{corollary}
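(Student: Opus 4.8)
The plan is to read this off directly from parts (iv) and (v) of Proposition~\ref{prop:Greens}, since both are characterisations of one and the same relation, namely $\cD$. Concretely, Proposition~\ref{prop:Greens}(iv) gives the biconditional $A \cD B \iff C(A) \cong C(B)$, while Proposition~\ref{prop:Greens}(v) gives $A \cD B \iff R(A) \cong R(B)$. Chaining these two equivalences through their common middle term $A \cD B$ yields $C(A) \cong C(B) \iff R(A) \cong R(B)$, which is precisely the claim. Throughout, I read $\cong$ as isomorphism of $\T$-semimodules, matching the wording of the proposition.

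In this sense there is essentially no obstacle: all of the substantive work has been absorbed into Proposition~\ref{prop:Greens}, whose parts (iv) and (v) are themselves imported from the duality results of \cite{HKDuality}. The only point worth checking is that $\cD$ is a genuine equivalence relation on all of $M(\T)$, so that the statement $A \cD B$ is a meaningful two-sided assertion for arbitrary $A,B$ of possibly different sizes; this holds because $\cD$ is an equivalence relation in any category, and it is exactly what licenses the chaining of the two biconditionals (in particular, transitivity of ``$\iff$'' is all that the argument uses).

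If one instead wanted a proof that bypasses the relation $\cD$ and argues directly at the level of semimodules, the real content would be to convert a given $\T$-semimodule isomorphism $C(A) \cong C(B)$ into an isomorphism $R(A) \cong R(B)$, and conversely. The hard part there would be to exploit the order-theoretic and projective duality between the row space and column space of a tropical matrix — the central theme of \cite{HKDuality} — in order to transport an isomorphism across this duality. Since Proposition~\ref{prop:Greens} is already available, however, the transitivity argument above renders such a direct construction unnecessary, and I would present only the one-line deduction.
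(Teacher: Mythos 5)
Your proof is correct and is exactly the argument the paper intends: the corollary is stated as an immediate consequence of Proposition~\ref{prop:Greens}, and chaining parts (iv) and (v) through the common equivalence $A \cD B$ is the whole content. Nothing further is needed.
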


\begin{theorem} \label{thm:auto}
Let $A \in M_{n \times m}(\T)$, $H = H_A$, and $\mathrm{Aut}(C(A))$ denote the group of $\T$-semimodule automorphisms of $C(A)$. Then
\begin{enumerate}[(i)]
\item for each $f \in \mathrm{Aut}(C(A))$ there exists $F \in M_n(\T)$ such that $\phi_F \in \Gamma_H$ maps $A$ to the matrix obtained by applying $f$ to the columns of $A$, and
\item the map $\psi\colon \mathrm{Aut}(C(A)) \rightarrow \Gamma_H, f \mapsto \phi_F$ is an isomorphism with inverse $\varphi \colon \Gamma_H \rightarrow \mathrm{Aut}(C(A))$, $\phi_X \mapsto (x \mapsto X \otimes x).$
\end{enumerate}
\end{theorem}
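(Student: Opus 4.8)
The plan is to take the map $\varphi$ of part (ii) as primary, prove it is a well-defined group homomorphism landing in $\mathrm{Aut}(C(A))$, and then obtain part (i) as the surjectivity of $\varphi$, with (ii) following formally once the two maps are seen to be mutually inverse. Writing $a_1,\dots,a_m$ for the columns of $A$, note that if $\phi_X\in\Gamma_H$ then $XA\in H$, so $C(XA)=C(A)$ by Proposition~\ref{prop:Greens}; since $C(XA)$ is exactly the image of $C(A)$ under $x\mapsto X\otimes x$, this map sends $C(A)$ onto $C(A)$. For injectivity I would use that $\Gamma_H$ is a group (Proposition~\ref{SchutzProps}): choosing $\phi_Y=\phi_X^{-1}$ gives $YXB=B$ for all $B\in H$, so $Y\otimes X$ fixes every column of $A$ and hence, being $\T$-linear, fixes all of $C(A)$; the same holds for $X\otimes Y$, so $x\mapsto X\otimes x$ restricts to an automorphism of $C(A)$. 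Well-definedness and the homomorphism property are then immediate: $\phi_X=\phi_{X'}$ forces $XA=X'A$, so $X$ and $X'$ agree on the generators and hence on $C(A)$, while $\phi_X\phi_{X'}=\phi_{XX'}$ and $(XX')\otimes x=X\otimes(X'\otimes x)$.

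For part (i), given $f\in\mathrm{Aut}(C(A))$ I would let $B$ be the matrix whose columns are $f(a_1),\dots,f(a_m)$ and aim to show $B\cH A$; granting this, $B\in H$ and Proposition~\ref{SchutzProps}(ii) supplies a unique $\phi_F\in\Gamma_H$ with $\phi_F(A)=B$, and since $FA=B$ means $F\otimes a_j=f(a_j)$ for all $j$, the map $x\mapsto F\otimes x$ agrees with $f$ on the generators and hence on $C(A)$, giving $\varphi(\phi_F)=f$. That $C(B)=C(A)$ (so $B\cR A$) is clear, since $f$ is a surjective homomorphism and therefore carries the generating set of $C(A)$ to a generating set of $f(C(A))=C(A)$. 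It remains to prove $R(B)=R(A)$, i.e.\ $B\cL A$. I would reduce this to a solvability statement: $B=SA$ has a solution $S\in M_n(\T)$ precisely when every row of $B$ lies in $R(A)$, that is when $R(B)\subseteq R(A)$; applying the same reduction to $f^{-1}$ and the equation $A=S'B$ yields the reverse inclusion, so it suffices to establish this containment for each of $f$ and $f^{-1}$.

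The heart of the argument, and the step I expect to be the main obstacle, is thus to show that the $\T$-semimodule homomorphism $f\colon C(A)\to\T^n$ is the restriction of a tropical linear map, equivalently that each functional $x\mapsto f(x)_i$ is a tropical combination $\bigoplus_k c_{ik}\otimes x_k$ of the coordinate functionals on $C(A)$; this is exactly the assertion $R(B)\subseteq R(A)$. The natural candidate for $F$ is the largest subsolution of $F\otimes A\le B$ obtained by residuation, and the problem becomes showing that this subsolution is an exact solution at each generator. I would establish this via the duality between column space and row space underlying Corollary~\ref{cor:ColSameRowSame}, so that the coordinate functionals generate the dual of $C(A)$, together with the closedness of the tropical polytope $C(A)$ and the surjectivity of $f$, which force the residuated bound to be attained. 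Once $R(B)\subseteq R(A)$ and, symmetrically, $R(A)\subseteq R(B)$ are in hand, we obtain $R(B)=R(A)$, hence $B\cH A$ and part (i).

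Finally, for part (ii) I would assemble the pieces: part (i) makes $\psi\colon f\mapsto\phi_F$ well defined, the computation above gives $\varphi\circ\psi=\mathrm{id}$ directly, and $\psi\circ\varphi=\mathrm{id}$ follows from the uniqueness in Proposition~\ref{SchutzProps}(ii), since both $\phi_X$ and $\psi(\varphi(\phi_X))$ send $A$ to $XA$. As $\varphi$ is a homomorphism that is bijective, it is an isomorphism with inverse $\psi$, completing the proof.
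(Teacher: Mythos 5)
Your overall architecture matches the paper's: both proofs establish part (i) by showing that the matrix $B$ whose columns are $f(A_1),\dots,f(A_m)$ satisfies $B \mathrel{\mathcal{H}} A$ and then invoking simple transitivity of $\Gamma_H$ (Proposition~\ref{SchutzProps}(ii)), and both obtain part (ii) by checking that $\psi$ and $\varphi$ are mutually inverse homomorphisms, with bijectivity coming from the group structure of $\Gamma_H$ and the fact that a map agreeing with $f$ on the columns of $A$ agrees with it on all of $C(A)$. Those computations are correct and essentially identical to the paper's.

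The gap is in the step you yourself flag as the main obstacle: proving $R(B)\subseteq R(A)$, i.e.\ that the abstract automorphism $f$ of $C(A)$ is induced by left multiplication by a matrix. Your reduction to solvability of $B=SA$ is correct, but the justification offered --- that ``the coordinate functionals generate the dual of $C(A)$'' and that ``closedness of $C(A)$ and the surjectivity of $f$ force the residuated bound to be attained'' --- is not an argument: the first assertion is essentially a restatement of the claim to be proved (that every $\T$-linear functional on $C(A)$ is the restriction of one given by a row vector), and surjectivity of $f$ is not what makes the principal subsolution of $SA\le B$ exact; what matters is that $f$ is linear on all of $C(A)$ and hence respects every tropical relation among the columns. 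Note also that Corollary~\ref{cor:ColSameRowSame} cannot substitute here, since it yields only the abstract isomorphism $R(A)\cong R(B)$, hence $A\mathrel{\mathcal{D}}B$, whereas you need the equality $R(A)=R(B)$, i.e.\ $A\mathrel{\mathcal{L}}B$. The fact you need is the exact duality theorem for tropical matrices, which is precisely what the paper imports at this point by citing \cite[Theorem 4.6 \& Corollary 5.3]{HKDuality} rather than reproving it. If you either cite that result or carry the residuation argument through in detail, the remainder of your proof goes through as written.
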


\begin{proof}
For $X \in M(\T)$, let $X_i$ denote the $i$th column of $X$. Then, let $f \in \mathrm{Aut}(C(A))$ and $B \in M_{n\times m}(\T)$ be the matrix obtained by applying $f$ to the columns of $A$, that is, $f(A_i) = B_i$.
Since $f$ is a surjective linear map, $C(A) = C(B)$, so $A \cR B$. Note that $f$ is a $\T$-linear isomorphism of column spaces, taking the $i$th column of $A$ to the $i$th column of $B$, so by \cite[Theorem 4.6 \& Corollary 5.3]{HKDuality} $R(A) = R(B)$, and hence $A \cL B$. Thus, $A\cH B$.
\par By Proposition~\ref{SchutzProps}\textit{(ii)}, $\Gamma_H$ acts simply transitively on $H$, so there exists a unique $\phi_F \in \Gamma_H$ such that $\phi_FA = B$. Moreover, $F \in M_n(\T)$ as $A,B \in M_{n \times m}(\T)$, so we have shown \textit{(i)}. 

\par The map $\psi$ is well-defined as $\Gamma_H$ acts simply transitively on $H$. 
Now, let $\psi(f) = \phi_F$ and $x \in C(A)$. Then, $x = \bigoplus_{i=1}^{m} (\lambda_i \otimes A_i)$ for some $\lambda_i \in \T$ and, by the linearity of $f$ and matrix multiplication,

\[f(x) = \bigoplus_{i=1}^{m} (\lambda_i \otimes f(A_i)) = \bigoplus_{i=1}^{m} (\lambda_i \otimes FA_i) = F \otimes x \]
for all $x \in C(A)$.
\par To show that $\psi$ is a homomorphism of groups, let $f,g \in \mathrm{Aut}(C(A))$, $\psi(f) = \phi_F$, and $\psi(g) = \phi_G$ such that $\phi_FA = B$ and $\phi_GB = C$. Then, $f(A_i) = B_i$ and $g(B_i) = C_i$ and $\psi(g \circ f) = \phi_Z$ for some $Z \in M_n(\T)$. Now,
\[(\phi_ZA)_i = ZA_i = (g \circ f)(A_i) = g(B_i) = C_i, \]
and hence $[\psi(g \circ f)](A) = C$. On the other hand,
\[ [\psi(g)\psi(f)](A) = \phi_G(\phi_FA) = \phi_G(B) = C.\]
Therefore, $[\psi(g \circ f)](A) = [\psi(g)\psi(f)](A)$ and, as $\Gamma_H$ acts simply transitively on $H$, $\psi(g \circ f) = \psi(g) \psi(f)$.

\par Let $f \in \mathrm{Aut}(C(X))$ such that $\psi(f) = \phi_{I_n}$. Then, as $\phi_{I_n}$ fixes $A$, $f$ fixes the columns of $A$. Thus, $f$ is a trivial automorphism and hence, $\psi$ has trivial kernel. Therefore, $\psi$ is injective.

\par To see that $\psi$ is surjective, let $\phi_F \in \Gamma_H$ and define $f\colon C(A) \rightarrow C(A)$ by $f(x) = F \otimes x$ for all $x \in C(A)$. Note that $\phi_F$ maps $H$ to $H$, so $f$ maps $C(A)$ to $C(A)$, and hence $F \otimes x \in C(A)$ for all $x \in C(A)$. 

\par Since $\Gamma_H$ is a group, there exists $\phi_{F'} \in \Gamma_H$ such that $\phi_F\phi_{F'} = \phi_{I_n} = \phi_{F'}\phi_F$. Now, define $f' \colon C(A) \rightarrow C(A)$ by $f'(x) = F' \otimes x$ for all $x \in C(A)$. Then, $f(f'(x)) = x = f'(f(x))$, and hence $f$ is a bijection with inverse $f'$. Thus, $f \in \mathrm{Aut}(C(A))$ and $\psi$ is a surjection as $\psi(f) = \phi_F$.
\par Finally, we show that $\varphi \circ \psi$ is the identity map on $\mathrm{Aut}(C(A))$. Let $x \in C(A)$ and $f \in \mathrm{Aut}(C(A))$ with $\psi(f) = \phi_F$. Then,
\[ [\varphi(\psi(f))](x) = [\varphi(\phi_F)](x) = F \otimes x = f(x)\]
Thus, $\varphi(\psi(f)) = f$, and so $\phi$ and $\varphi$ are mutually inverse group isomorphisms.
\end{proof}

By considering automorphisms of row spaces rather than column spaces, the above theorem can easily be adapted to show that $\mathrm{Aut}(R(A))$ is isomorphic to ${}_H\Gamma$, the right Sch\"utzenberger group of $H$.

\begin{corollary} \label{cor:CRSameAut} 
Let $A \in M_{n \times m}(\T)$. Then $\mathrm{Aut}(C(A)) \cong \mathrm{Aut}(R(A))$ as abstract groups.
\end{corollary}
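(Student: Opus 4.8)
The plan is to obtain the desired isomorphism by chaining together results already established in this section, rather than constructing a map between $\mathrm{Aut}(C(A))$ and $\mathrm{Aut}(R(A))$ directly. Set $H = H_A$. Theorem~\ref{thm:auto} identifies $\mathrm{Aut}(C(A))$ with the left Sch\"utzenberger group $\Gamma_H$, and the remark following that theorem identifies $\mathrm{Aut}(R(A))$ with the right Sch\"utzenberger group ${}_H\Gamma$ of the \emph{same} $\cH$-class $H$. Since Proposition~\ref{SchutzProps}\textit{(iii)} asserts that $\Gamma_H$ and ${}_H\Gamma$ are isomorphic as abstract groups, composing these isomorphisms yields
\[ \mathrm{Aut}(C(A)) \cong \Gamma_H \cong {}_H\Gamma \cong \mathrm{Aut}(R(A)), \]
which is precisely the claim.

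The only point requiring care is that both automorphism groups must be computed relative to one and the same $\cH$-class, so that Proposition~\ref{SchutzProps}\textit{(iii)} applies. This holds here because we fix a single matrix $A$: its column space $C(A)$ and its row space $R(A)$ are both attached to $A$, and the adapted form of Theorem~\ref{thm:auto} produces ${}_H\Gamma$ for exactly $H = H_A$, the same class whose left Sch\"utzenberger group is $\Gamma_H \cong \mathrm{Aut}(C(A))$. Thus no compatibility issue arises, and there is essentially no obstacle beyond invoking the correct prior results in the right order; the content of the corollary is really just the observation that $\mathrm{Aut}(C(A))$ and $\mathrm{Aut}(R(A))$ both encode the intrinsic group attached to a single $\cH$-class.

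As an alternative, one could attempt to build an explicit isomorphism $\mathrm{Aut}(C(A)) \to \mathrm{Aut}(R(A))$ via the order-theoretic duality between a tropical polytope and its dual underlying \cite{HKDuality}. This would make the correspondence concrete, but it is strictly more work than the three-step argument above, so I would present the chain of isomorphisms as the proof.
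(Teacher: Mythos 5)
Your proposal is correct and matches the paper's proof exactly: both chain $\mathrm{Aut}(C(A)) \cong \Gamma_{H_A} \cong {}_{H_A}\Gamma \cong \mathrm{Aut}(R(A))$ using Theorem~\ref{thm:auto}, its dual for row spaces, and Proposition~\ref{SchutzProps}\textit{(iii)}. No issues.
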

\begin{proof}
By Theorem \ref{thm:auto}, its dual statement for row spaces, and Proposition~\ref{SchutzProps}\textit{(iii)}, we have that, 
\[ \mathrm{Aut}(C(A)) \cong \Gamma_{H_A} \cong {}_{H_A}\Gamma \cong \mathrm{Aut}(R(A))\] 
as abstract groups.
\end{proof}

\section{Group of units embedding} \label{sec:FullRank}
In this section, we introduce the \emph{row and column rank} of a matrix and show that to identify the isomorphism types of Sch\"utzenberger groups of $M(\T)$, it suffices to consider full rank matrices. We then show that, given a full rank matrix $A \in M_{n \times m}(\T)$, that we can embed its Sch\"utzenberger group in the group of units of $M_n(\T)$. Using this embedding, we can realise each maximal subgroup as a direct product of semidirect products.

For $X \in M(\T)$, we define the \emph{column rank} and \emph{row rank} of $X$ to be the minimum cardinality of a generating set, under tropical scaling and addition, of $C(X)$ and $R(X)$ respectively. If $X \in M_{n \times m}(\T)$ has row rank $r$ and column rank $c$, then $0 \leq r \leq n$ and $0 \leq c \leq m$, and we say that $X$ has \emph{full row rank} if $r = n$ and \emph{full column rank} if $c = m$. If $X$ has full row and column rank, we say $X$ has \emph{full rank}.

\begin{lemma} \label{lem:reduced}
    Let $X \in M_{n \times m}(\T)$ have row rank $r$ and column rank $c$. Then, for every $k \geq r$ and $l\geq c$ there exists $Z \in M_{k\times l}(\T)$ such that $C(X) \cong C(Z)$, $R(X) \cong R(Z)$, $\Gamma_{H_X} \cong \Gamma_{H_Z}$ and ${}_{H_X}\Gamma \cong {}_{H_Z}\Gamma$.
\end{lemma}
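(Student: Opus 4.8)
My plan is to collapse all four required isomorphisms onto a single statement about column spaces and then to produce $Z$ by three elementary matrix moves. By Theorem~\ref{thm:auto} we have $\Gamma_{H_X} \cong \mathrm{Aut}(C(X))$, and the automorphism group of a $\T$-semimodule depends only on its isomorphism type; the analogous facts for row spaces and ${}_{H_X}\Gamma$ hold by the dual of Theorem~\ref{thm:auto}. Moreover, by Corollary~\ref{cor:ColSameRowSame}, $C(Z) \cong C(X)$ is equivalent to $R(Z) \cong R(X)$. Hence it suffices to construct $Z \in M_{k \times l}(\T)$ with $C(Z) \cong C(X)$: the three remaining isomorphisms $R(Z) \cong R(X)$, $\Gamma_{H_Z} \cong \Gamma_{H_X}$ and ${}_{H_Z}\Gamma \cong {}_{H_X}\Gamma$ then follow formally.

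To build such a $Z$ I would proceed in three steps, assuming $r,c \geq 1$ (the case $r=0$, equivalently $c=0$ by Corollary~\ref{cor:ColSameRowSame}, is the all-$(-\infty)$ matrix and is immediate). First, let $w_1,\dots,w_r \in \T^m$ be a generating set of $R(X)$ of minimal size $r$, which exists by the definition of row rank, and let $W \in M_{r \times m}(\T)$ have these as its rows. Then $R(W) = R(X)$, so Corollary~\ref{cor:ColSameRowSame} gives $C(W) \cong C(X)$. Second, since minimal generating-set size is an isomorphism invariant, $C(W) \subseteq \T^r$ has column rank $c$; choosing a size-$c$ generating set $u_1,\dots,u_c$ of $C(W)$ and forming the matrix $W' \in M_{r \times l}(\T)$ with columns $u_1,\dots,u_c,u_1,\dots,u_1$ ($l-c \geq 0$ repeated columns, valid as $l \geq c$) leaves the column space unchanged, so $C(W') = C(W)$. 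Third, to pass from $r$ to $k \geq r$ rows I repeatedly duplicate a coordinate: the map $\iota\colon \T^r \hookrightarrow \T^{r+1}$, $(x_1,\dots,x_r) \mapsto (x_1,x_1,x_2,\dots,x_r)$, is an injective $\T$-linear map carrying each column of a matrix $Y$ to the corresponding column of the matrix $Y^+$ obtained by duplicating its first row, and hence restricts to a semimodule isomorphism $C(Y) \to C(Y^+)$. Applying this $k-r$ times to $W'$ yields $Z \in M_{k \times l}(\T)$ with $C(Z) \cong C(W') = C(W) \cong C(X)$, as required.

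The only genuinely non-trivial move is the row direction, since altering the number of rows changes the ambient space and could a priori change the isomorphism type of the column space. The key leverage is Corollary~\ref{cor:ColSameRowSame}: instead of controlling $C(X)$ directly, I control it indirectly by rebuilding the row space from exactly $r$ generators, which forces the column space into $\T^r$ up to isomorphism, and I then enlarge the dimension harmlessly via the split embedding $\iota$. The column direction is routine, taking place within a fixed ambient space and needing only that a space of column rank $c$ admits a generating family of every size $\geq c$. I expect the main care to lie in verifying that $\iota$ genuinely restricts to an isomorphism of column spaces and in disposing of the degenerate zero-rank case.
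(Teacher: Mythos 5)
Your proof is correct and follows essentially the same route as the paper's: reduce all four isomorphisms to constructing $Z$ with $C(Z)\cong C(X)$ via Corollary~\ref{cor:ColSameRowSame}, and then obtain $Z$ by row and column surgery (pass to $r$ generators of the row space, pad to $k$ rows, then do the same for columns). The only, harmless, differences are that you take an arbitrary minimal generating set of $R(X)$ and realise the row padding as an explicit split embedding, where the paper instead cites the fact that $r$ of the rows of $X$ already generate $R(X)$ and simply duplicates rows, and that you deduce the Sch\"utzenberger-group isomorphisms from Theorem~\ref{thm:auto} rather than from $\cD$-class invariance via Propositions~\ref{prop:Greens}\textit{(iv)} and \ref{SchutzProps}\textit{(iv)}.
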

\begin{proof}
    By \cite[Corollary~20]{BSSGenExtemeBases}, $R(X)$ is generated by $r$ rows of $X$, as $X$ has row rank $r$. 
    Thus, by removing all rows except $r$ which generate $R(X)$, and then duplicating rows until there are $k$ rows, we obtain a $k \times m$ matrix $Y$ with $R(X) = R(Y)$ and, by Corollary~\ref{cor:ColSameRowSame}, $C(X) \cong C(Y)$. 
    Similarly, $Y$ has $c$ columns which generate $C(Y)$, so by removing any columns except $c$ which generate $C(Y)$, and then duplicating columns until there are $l$ columns, we obtain a $k \times l$ matrix $Z$ such that $C(Y) = C(Z)$. Thus, $C(X) \cong C(Z)$, and hence, by Proposition~\ref{prop:Greens}, $X \cD Z$ and $R(X) \cong R(Z)$. Therefore, $\Gamma_{H_X} \cong \Gamma_{H_Z}$ and ${}_{H_X}\Gamma \cong {}_{H_Z}\Gamma$ by Proposition~\ref{SchutzProps}\textit{(iv)} and its dual.
\end{proof}

\par We say that $P \in M(\T)$ is a \emph{unit} if and only if it is $\cH$-related to an identity matrix in $M(\T)$. By Corollary~\ref{cor:hclasses}, all units are square. Furthermore, it is well-known that $P \in M_n(\T)$ is a unit if and only if $P$ is a monomial matrix, that is, if every row and column contains exactly one finite entry. Thus, the unit matrices in $M(\T)$ are precisely the monomial matrices.
\par For a matrix $A \in M_{n\times m}(\T)$, define $G_A$ to be the set of all units $P \in M_n(\T)$ such that $PX \in H_A$ for all $X \in H_A$, or equivalently, that $PA \in H_A$. Note that $R(PA) = R(A)$ for all units $P \in M_n(\T)$, so $G_A$ is exactly the set of units such that $C(PA) = C(A)$ by Proposition~\ref{prop:Greens}.

\begin{theorem} \label{thm:units}
Let $A \in M_{n \times m}(\T)$ have full row rank and $H = H_A$. Then
\[ \gamma \colon G_A \rightarrow \Gamma_H, \ \gamma(P) = \phi_P \]
is a group isomorphism.
\end{theorem}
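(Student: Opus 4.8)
The plan is to verify that $\gamma$ is a well-defined, bijective homomorphism, with the group structure of $G_A$ inherited from $\Gamma_H$. Well-definedness is immediate from the definition of $G_A$: if $P \in G_A$ then $PX \in H$ for all $X \in H$, so $\phi_P$ is a genuine element of $\Gamma_H$. The homomorphism property is equally routine, since for any $X \in H$ we have $\phi_{PQ}(X) = (PQ)X = P(QX) = (\phi_P \circ \phi_Q)(X)$ by associativity of tropical matrix multiplication, whence $\gamma(PQ) = \gamma(P)\gamma(Q)$; that $G_A$ is closed under products and inverses (and hence is a subgroup of the unit group of $M_n(\T)$) follows from this together with the fact that $\Gamma_H$ is a group acting simply transitively on $H$.

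The two steps carrying real content are injectivity and surjectivity, and both reduce to the same structural fact: a minimal tropical generating set of a tropical polytope is unique up to permutation and tropical scaling of its elements. For injectivity, it suffices to show that if $R$ is a unit with $RA = A$, then $R = I_n$ (applying this to $R = Q^{-1}P$ when $\phi_P = \phi_Q$, noting that $\phi_P = \phi_Q$ forces $PA = QA$ and hence $(Q^{-1}P)A = A$). Writing $R$ as the monomial matrix with $R_{i,\tau(i)} = \mu_i$ for a permutation $\tau$ and scalars $\mu_i \in \R$, the equation $RA = A$ says precisely that the $i$th row of $A$ equals $\mu_i \otimes$ (the $\tau(i)$th row of $A$). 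Because $A$ has full row rank $n$, its $n$ rows form a minimal generating set of $R(A)$ and are therefore pairwise non-proportional; if $\tau(i) \neq i$ then rows $i$ and $\tau(i)$ would be proportional, a contradiction, so $\tau = \mathrm{id}$, and then $A_{i,\cdot} = \mu_i \otimes A_{i,\cdot}$ forces $\mu_i = 0$ for each $i$, i.e. $R = I_n$.

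For surjectivity, I would use simple transitivity to reduce to a single matrix: given $\phi_X \in \Gamma_H$, set $B = \phi_X(A) = XA \in H$; it suffices to produce a \emph{unit} $P \in G_A$ with $PA = B$, since then $\phi_P(A) = B = \phi_X(A)$ forces $\phi_P = \phi_X$ by simple transitivity. Since $A \cH B$, Proposition~\ref{prop:Greens} gives $R(A) = R(B)$, and because $B$ lies in the same $\cH$-class it also has full row rank $n$. Thus the rows of $A$ and the rows of $B$ are two minimal generating sets of the \emph{same} polytope $R(A) = R(B)$, each of size $n$; by uniqueness up to permutation and scaling there is a permutation $\tau \in \cS_n$ and scalars $\mu_i \in \R$ with $B_{i,\cdot} = \mu_i \otimes A_{\tau(i),\cdot}$. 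The monomial matrix $P$ with $P_{i,\tau(i)} = \mu_i$ is then a unit satisfying $PA = B$, which lies in $H$, so $P \in G_A$ and $\gamma(P) = \phi_X$.

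The main obstacle I anticipate is the surjectivity step, specifically justifying that the two generating sets are related by a monomial matrix. This rests on the uniqueness of the extreme generators of a tropical polytope (the analogue of the fact that the extreme rays of a classical polyhedral cone are determined up to positive scaling), which I would invoke from the theory of tropical convexity in the same spirit as the extremal-basis result \cite[Corollary~20]{BSSGenExtemeBases} already used in Lemma~\ref{lem:reduced}. Once that fact is in hand, both injectivity and surjectivity follow cleanly, and full row rank is exactly the hypothesis ensuring the rows form an honest (irredundant) minimal generating set, so that the uniqueness statement applies.
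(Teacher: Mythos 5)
Your proposal is correct and follows essentially the same route as the paper's proof: well-definedness and the homomorphism property are routine, injectivity uses full row rank to rule out any nontrivial permutation-and-scaling of the rows, and surjectivity uses the uniqueness of minimal generating sets of the row space (the extremal-basis result \cite[Corollary~20]{BSSGenExtemeBases}) together with simple transitivity of $\Gamma_H$ to replace an arbitrary $\phi_F$ by $\phi_P$ for a unit $P$. The only cosmetic difference is that you reduce injectivity to the case $RA = A$ via $R = Q^{-1}P$, whereas the paper works directly with the kernel condition $\phi_P = \phi_{I_n}$; the substance is identical.
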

\begin{proof}
First note that $\phi_P \in \Gamma_H$ for all $P \in G_A$. For $P,Q \in G_A$,
\[\gamma(PQ) = \phi_{PQ} = \phi_P\phi_Q = \gamma(P)\gamma(Q)\]
so, $\gamma$ is a homomorphism of groups.

\par For injectivity, suppose $\phi_P = \phi_{I_n}$ for some $P \in G_A$. 
Then, $PX = X$ for all $X \in H$.
As, $P$ is a monomial matrix, $PX$ is a scaling and permutation of the rows of $X$. However, $A$ has full row rank, so no row of $X$ is a scaling of another row. Thus, $P = I_n$ and hence, $\gamma$ is injective.

\par For surjectivity, let $\phi_F \in \Gamma_H$ and $B = FA \in H$. Since $A \cH B$, and $A$ has full row rank, it follows that the rows of $B$ provide a minimal generating set for the row space of $A$, and hence, by \cite[Corollary~20]{BSSGenExtemeBases}, each row of $B$ is a scaling of a different row in $A$. Thus, there exists a unit $P \in M_n(\T)$ such that $B = PA \in H$, and hence $P \in G_A$. Moreover, as $\Gamma_H$ acts simply transitively on $H$ by Proposition~\ref{SchutzProps}\textit{(ii)}, $\phi_P = \phi_F$.
\end{proof}

For any matrix $A \in M_{n \times m}(\T)$ with no row or column only containing $-\infty$, let $\cB_A$ be the coloured directed bipartite graph with vertex sets $\Omega = \{\omega_1, \dots, \omega_n\}$ and $\Theta = \{\theta_1,\dots,\theta_m\}$, with an edge from $\omega_i$ to $\theta_j$ if and only if $A_{ij} \neq -\infty$ where the edge has colour $A_{i,j}$ if it exists. Note that all edges in $\cB_A$ go from $\Omega$ to $\Theta$. We let $\cC_A$ be the set of connected components of $\cB_A$ and, for $X \in \cC_A$, let $A|_X$ be the submatrix of $A$ containing $A_{ij}$ if and only if $\omega_i,\theta_j \in X$. For convenience, we keep the labelling of the rows and columns of $A$ for $A|_X$.

\begin{example}
    Let $A = \left(\begin{smallmatrix}
        0 & 0 & -\infty & -\infty \\
        -\infty & 1 & -\infty & -\infty\\
        -\infty & -\infty & 1 & 0
    \end{smallmatrix}\right) \in M_{3 \times 4}(\T)$. Then, the coloured directed bipartite graph $\cB_A$ is given by
    \[\begin{tikzpicture}
       \node[circle,draw] (A) at (0.75,0){$\omega_1$};
        \node[circle,draw] (B) at (2.25,0){$\omega_2$};
        \node[circle,draw] (C) at (3.75,0){$\omega_3$};
        \node[circle,draw] (D) at (0,-1.5){$\theta_1$};
        \node[circle,draw] (E) at (1.5,-1.5){$\theta_2$};
        \node[circle,draw] (F) at (3,-1.5){$\theta_3$};
        \node[circle,draw] (G) at (4.5,-1.5){$\theta_4$};
        \draw[red, -stealth,] (A) to (D);
        \draw[red, -stealth] (A) to (E);
        \draw[blue, -stealth] (B) to (E);
        \draw[blue, -stealth] (C) to (F);
        \draw[red, -stealth] (C) to (G);
        \draw node at (0.2,-0.7) {0};
        \draw node at (1.3,-0.7) {0};
        \draw node at (2.1,-0.7) {1};
        \draw node at (3.2,-0.7) {1};
        \draw node at (4.3,-0.7) {0};
    \end{tikzpicture}\]
    where $\cC_A = \{\{\omega_1,\omega_2,\theta_1,\theta_2\},\{\omega_3,\theta_3,\theta_4\}\}$ and
    \[ A|_{\{\omega_1,\omega_2,\theta_1,\theta_2\}} = \begin{pmatrix}
        0 & 0 \\
        -\infty & 1
    \end{pmatrix} \text{ and } A|_{\{\omega_3,\theta_3,\theta_4\}} = \begin{pmatrix}
        1 & 0
    \end{pmatrix}\]
\end{example}

\begin{remark} \label{rem:reducedrank}
    Let $A \in M_{n \times m}(\T)$ have full row (resp. column) rank. Then, $A|_X$ has full row (resp. column) rank for all $X \in \cC_A$. 
\end{remark}
Note that if $A$ has full rank, then it does not have any row or column only containing $-\infty$, so $\cC_A$ is defined.

Analogously to $G_A$, for $A \in M_{n \times m}(\T)$, let ${}_AG$ be the set of all units $Q \in M_m(\T)$ such that $XQ \in H_A$ for all $X \in H_A$, or equivalently, that $AQ \in H_A$. Note that $C(A) = C(AQ)$ for all units $Q \in M_m(\T)$, so ${}_AG$ is exactly the set of units such that $R(AQ) = R(A)$ by Proposition~\ref{prop:Greens}.

\begin{lemma} \label{dual}
Let $A \in M_{r\times c}(\T)$ have full rank. Then, for all $P \in G_A$ there exists a unique $Q \in {}_AG$ such that $PA = AQ$.
\end{lemma}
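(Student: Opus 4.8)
The plan is to reduce the statement to the machinery already established for right Sch\"utzenberger groups. Since $A$ has full rank, it has full column rank, so the dual of Theorem~\ref{thm:units} applies: the map $\varphi\colon {}_AG \to {}_{H}\Gamma$, $Q \mapsto \varphi_Q$, is a group isomorphism, where $H = H_A$ and $\varphi_Q\colon H \to H$ is right multiplication by $Q$. By the dual of Proposition~\ref{SchutzProps}\textit{(ii)}, the group ${}_H\Gamma$ acts simply transitively on $H$ on the right. These two facts will give both existence and uniqueness of $Q$ essentially for free, once I observe that $PA$ lies in $H$ and can be reached from $A$ by right multiplication by a unit.

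For \emph{existence}, given $P \in G_A$ I would set $B = PA$, which lies in $H$ precisely because $P \in G_A$. Since $A \cH B$ and $A$ has full column rank, the columns of $B$ form a minimal generating set of $C(B) = C(A)$; by \cite[Corollary~20]{BSSGenExtemeBases} each column of $B$ is then a tropical scaling of a distinct column of $A$. Consequently there is a monomial (hence unit) matrix $Q \in M_c(\T)$ with $B = AQ$. To see $Q \in {}_AG$, note $R(AQ) = R(B) = R(A)$ because $B \in H$, so by Proposition~\ref{prop:Greens} indeed $AQ \in H_A$. This yields $PA = B = AQ$ with $Q \in {}_AG$, as required.

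For \emph{uniqueness}, suppose $AQ = PA = AQ'$ for $Q, Q' \in {}_AG$. I would run the dual of the injectivity argument from the proof of Theorem~\ref{thm:units}: since $Q$ and $Q'$ are monomial, the columns of $AQ$ and of $AQ'$ are scalings and permutations of the columns of $A$, and because $A$ has full column rank no column of $A$ is a scaling of another. Hence the permutation and the scaling factors encoded by $Q$ and $Q'$ must coincide, forcing $Q = Q'$. (Equivalently, $\varphi_Q(A) = \varphi_{Q'}(A)$ and simple transitivity of ${}_H\Gamma$ give $\varphi_Q = \varphi_{Q'}$, whence $Q = Q'$ by injectivity of $\varphi$.) The only genuinely load-bearing point is this last step, that full column rank pins down a unit acting on the right uniquely by its effect on $A$; but this is exactly the dual of the injectivity computation already carried out in Theorem~\ref{thm:units}, so I expect no real obstacle beyond transcribing that argument to columns.
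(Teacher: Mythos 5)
Your proposal is correct and takes essentially the same route as the paper: both arguments hinge on the fact that, because $A$ has full column rank, the columns of $PA$ must be scalings of a permutation of the columns of $A$ (the paper phrases this via the automorphism interpretation from Theorems~\ref{thm:auto} and \ref{thm:units}, you via \cite[Corollary~20]{BSSGenExtemeBases} applied to the minimal generating set of $C(PA)=C(A)$, mirroring the surjectivity step of Theorem~\ref{thm:units}), and both derive uniqueness from the observation that full column rank forces $AQ=AQ'$ to imply $Q=Q'$ for unit matrices.
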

\begin{proof}
By Theorems~\ref{thm:auto} and \ref{thm:units}, every $P \in G_A$ corresponds to an automorphism of the column space of $C(A)$. Moreover, as $A$ has full column rank, no column is a multiple of any other column, so $P \in G_A$ only permutes and scales the columns. Hence, $P \otimes A_i = \mu_{\tau^{-1}(i)} + A_{\tau^{-1}(i)}$ for some $\mu_{\tau^{-1}(i)} \in \F\T$ and $\tau \in \cS_c$. 
So, define $Q_{ij} = \mu_i$ if $j = \tau(i)$, and $-\infty$ otherwise. Then, $(AQ)_i = \mu_{\tau^{-1}(i)} + A_{\tau^{-1}(i)}$ and $PA = AQ$. For uniqueness, note that since $A$ has full column rank, if $AQ = AQ'$ for a unit matrix $Q'$ then $Q = Q'$.
\end{proof}

\begin{proposition} \label{prop:semidirectsamecol}
    Let $A \in M_{r \times c}(\T)$ have full rank, $\cC_A = \{C_{1},\dots,C_{n}\}$, and $\Omega_i = \Omega \cap C_i$. Suppose $C(A|_{C_i}) \cong C(A|_{C_j})$ for all $i,j$. Then, let $U^{1,1} = I_{|\Omega_1|}$ and, for each $j \neq 1$, let $U^{1,j}$ be a unit such that $C(U^{1,j}\cdot A|_{C_j}) = C(A|_{C_1})$. Define the groups,
    \begin{align*}
        G_{C_i} &= \{P \in G_A \colon P_{k,k} = 0 \text{ if } \omega_k \notin \Omega_i\} \cong G_{A|_{C_i}} \text{ and} \\
        S_n &= \{ P \in G_A \colon P|_{\Omega_i \times \Omega_j} = \{(-\infty),U^{i,j}\}\} \cong \cS_n,
    \end{align*}
    where $(-\infty)$ denotes a matrix only containing $-\infty$, $U^{i,j} = (U^{1,i})^{-1}U^{1,j}$, and $P|_{\Omega_i \times \Omega_j}$ is the submatrix of $P$ containing $P_{s,t}$ if and only if $\omega_s \in \Omega_i$ and $\omega_t \in \Omega_j$.
    Then, $G_A$ is the internal semidirect product of $\prod_{i=1}^nG_{C_i}$ and $S_n$.
\end{proposition}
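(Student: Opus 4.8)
The plan is to exhibit a homomorphism $\Phi\colon G_A \to \cS_n$ recording how each unit permutes the connected components, identify its kernel with $N := \prod_{i=1}^n G_{C_i}$, and verify that $S_n$ is a complement. Since the off-diagonal entries of $A$ (those $A_{ij}$ with $\omega_i,\theta_j$ in different components) are $-\infty$, the column space decomposes as a tropical direct sum $C(A) = \bigoplus_{i=1}^n C(A|_{C_i})$, with the $i$th summand supported on the coordinate block $\Omega_i$ and all blocks disjoint. This block structure is what $\Phi$ will track. Once $N = \ker\Phi$ is established (so $N$ is automatically normal and, since distinct blocks act on disjoint coordinates, $N \cong \prod_i G_{A|_{C_i}}$ is an internal direct product) and $S_n$ is shown to be a subgroup isomorphic to $\cS_n$ meeting $N$ trivially and surjecting onto $\cS_n$ under $\Phi$, the standard criterion for an internal semidirect product (namely $N \trianglelefteq G_A$, $S_n \cap N = \{I\}$, and $N S_n = G_A$) yields the result.

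\textbf{Defining and controlling $\Phi$.} This is the crux and the main obstacle. Fix $P \in G_A$. By Lemma~\ref{dual} (together with the full-column-rank hypothesis and Theorem~\ref{thm:units}), $P$ acts on the columns of $A$ by a permutation-and-scaling: there is $\tau \in \cS_c$ with $P \otimes A_i = \mu_i \otimes A_{\tau^{-1}(i)}$. Writing $\sigma$ for the row permutation underlying the monomial matrix $P$, the support of $P \otimes A_i$ is $\sigma^{-1}(\mathrm{supp}(A_i))$; comparing supports gives $\sigma^{-1}(\mathrm{supp}(A_i)) = \mathrm{supp}(A_{\tau^{-1}(i)})$, which lies in the single block $\Omega_{k}$ determined by the component of $\theta_{\tau^{-1}(i)}$. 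I would then run a connectivity argument inside a fixed component $C_k$: any two columns $\theta_i,\theta_{i'} \in \Theta_k$ whose supports share a row, and hence (transitively, by connectedness of $C_k$) all columns of $\Theta_k$, must be sent by $\sigma^{-1}$ into one and the same target block. Since no row of $A$ is identically $-\infty$ (full rank, Remark~\ref{rem:reducedrank}), the supports of the columns in $\Theta_k$ cover $\Omega_k$, so $\sigma^{-1}(\Omega_k) \subseteq \Omega_{\pi(k)}$ for a well-defined index $\pi(k)$. Because $C(A|_{C_i}) \cong C(A|_{C_j})$ forces equal row ranks and hence $|\Omega_i| = |\Omega_j|$ (via Corollary~\ref{cor:ColSameRowSame} and full row rank), the inclusions are equalities and $\pi \in \cS_n$. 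Setting $\Phi(P) = \pi$ gives a homomorphism, since composing monomial matrices composes the induced block permutations.

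\textbf{Kernel and complement.} An element $P \in \ker\Phi$ preserves each $\Omega_i$ setwise; block-diagonality of $A$ then makes the condition $C(PA)=C(A)$ split as $C\big(P|_{\Omega_i}\cdot A|_{C_i}\big) = C(A|_{C_i})$ for every $i$, i.e. $P|_{\Omega_i} \in G_{A|_{C_i}}$ with trivial off-diagonal blocks, so $\ker\Phi = \prod_{i=1}^n G_{C_i} = N$. For the complement, I would first check each $P_\pi \in S_n$ genuinely lies in $G_A$: its block in rows $\Omega_i$ is $U^{i,\pi^{-1}(i)}$, and using $U^{i,j} = (U^{1,i})^{-1}U^{1,j}$ together with $C(U^{1,j}\cdot A|_{C_j}) = C(A|_{C_1})$ one computes $C\big(U^{i,j}\cdot A|_{C_j}\big) = C(A|_{C_i})$, whence $C(P_\pi A) = \bigoplus_i C(A|_{C_i}) = C(A)$. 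The identity $U^{i,j}U^{j,k} = (U^{1,i})^{-1}U^{1,k} = U^{i,k}$ is the cocycle condition ensuring $P_\pi P_{\pi'} = P_{\pi\pi'}$, so $S_n$ is a subgroup and $\pi \mapsto P_\pi$ an isomorphism $\cS_n \to S_n$ with $\Phi(P_\pi)=\pi$; thus $\Phi|_{S_n}$ is onto. Finally $S_n \cap N = \{I\}$ because a block-preserving element of $S_n$ has diagonal blocks $U^{i,i}=I$ and $-\infty$ elsewhere, and for arbitrary $P \in G_A$ the element $P\,P_{\Phi(P)}^{-1}$ lies in $\ker\Phi = N$, giving $G_A = N S_n$. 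This establishes that $G_A$ is the internal semidirect product of $\prod_{i=1}^n G_{C_i}$ and $S_n$, with $S_n$ acting by permuting the direct factors.
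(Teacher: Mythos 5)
Your proposal is correct and follows essentially the same route as the paper: both hinge on Lemma~\ref{dual} to show that each $P \in G_A$ permutes the connected components, on the cocycle identity $U^{i,j}U^{j,k}=U^{i,k}$ to make $S_n$ a subgroup isomorphic to $\cS_n$, and on writing an arbitrary $P$ as a block-diagonal element times an element of $S_n$ to get $G_A = NS_n$. The only organizational difference is that you package the block permutation as a homomorphism $\Phi\colon G_A\to\cS_n$ with kernel $N$, so normality of $N$ comes for free, whereas the paper verifies $SDS^{-1}\in N$ directly by conjugation.
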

\begin{proof} 
\par By Corollary~\ref{cor:ColSameRowSame} and Remark~\ref{rem:reducedrank}, $R(A|_{C_j}) \cong R(A|_{C_1})$ and $A|_{C_j}$ has full rank for all $1 \leq j \leq n$. Thus, for all $j$, $|\Omega_j| = |\Omega_1|$, $|\Theta \cap C_j| = |\Theta \cap C_1|$, and by \cite[Theorem 102]{GLecturesMaxPlus}, there exists a unit $U^{1,j}$ such that $C(U^{1,j}\cdot A|_{C_j}) = C(A|_{C_1})$.

\par Note that if $P \in G_{C_i}$, then $P|_{\Omega_i \times \Omega_i} \in G_{A|_{C_i}}$ and $P|_{\Omega_j \times \Omega_j} = I_{|\Omega_j|}$ for any $j \neq i$. Thus, $G_{C_i} \cong G_{A|_{C_i}}$ for all $i$, and moreover, each $G_{C_i}$ is a subgroup of $G_A$ which intersects trivially and commutes with $G_{C_j}$ for any $j \neq i$. So, $N = \prod_{i=1}^n G_{C_i}$, the internal direct product of all the $G_{C_i}$, is a subgroup of $G_A$.

Let $\sigma, \tau \in \cS_n$ and define units $S,T$ such that $S|_{\Omega_i \times \Omega_{\sigma(i)}} = U^{i,\sigma(i)}$ and $T|_{\Omega_i \times \Omega_{\tau(i)}} = U^{i,\tau(i)}$ for all $i$. Then,
\[(SA)|_{\Omega_i \times \Theta_{\sigma(i)}} = S|_{\Omega_i \times \Omega_{\sigma(i)}}A|_{\Omega_{\sigma(i)} \times \Theta_{\sigma(i)}} = S|_{\Omega_i \times \Omega_{\sigma(i)}}A|_{C_{\sigma(i)}} = U^{i,\sigma(i)}A|_{C_{\sigma(i)}}\]
where $X|_{\Omega_i \times \Theta_j}$ is the submatrix of $X$ containing $X_{s,t}$ if and only if $\omega_s \in \Omega_i$ and $\theta_t \in \Theta_j$.
Thus, $C(SA) = C(A)$ as $C(U^{i,\sigma(i)}A|_{C_{\sigma(i)}}) = C(A|_{C_i})$. Therefore, $S \in G_A$ and hence, $S \in S_n$.
To see that $S^{-1} \in S_n$ note that, for all $i$,
\[ S^{-1}|_{\Omega_{\sigma(i)} \times \Omega_i} = (U^{i,\sigma(i)})^{-1} = ((U^{1,i})^{-1}U^{1,\sigma(i)})^{-1} = (U^{1,\sigma(i)})^{-1}U^{1,i} = U^{\sigma(i),i}.\]
Similarly, to see that $ST \in S_n$ note that, for all $i$ and $j$,
\[(ST)|_{\Omega_i \times \Omega_{\tau(j)}} =
\begin{cases}
    U^{i,\sigma(i)}U^{j,\tau(j)} = U^{i,\tau(j)} &\text{if } j = \sigma(i), \\
    (-\infty) &\text{otherwise.}
\end{cases} \]
Thus, $S_n \cong \cS_n$ and $S_n$ is a subgroup of $G_A$. We now aim to show that $G_A$ is the internal semidirect product of $N$ and $S_n$

First note that $N \cap S_n = I_r$. We now show that $NS_n = G_A$. Let $P \in G_A$ with $P_{i_1,j_1} \neq -\infty \neq P_{i_2,j_2}$. Then, by Lemma~\ref{dual}, there exist $Q \in {}_AG$ such that $PA = AQ$. 

\par Right multiplying by $Q \in {}_AG$ only permutes the columns of $A$, so $\omega_s$ and $\omega_t$ are connected in $\cB_A$ if and only $\omega_s$ and $\omega_t$ are connected in $\cB_{AQ} = \cB_{PA}$. Then, by the definition of $P$, the $j_1$-th and $j_2$-th rows of $A$ are scalings of the $i_1$-th and $i_2$-th rows of $PA$ respectively.
Thus, $\omega_{i_1}$ and $\omega_{i_2}$ are connected in $\cB_A$ if and only if $\omega_{j_1}$ and $\omega_{j_2}$ are connected in $\cB_A$. Therefore, $P$ maps connected components to connected components.

\par Suppose $P \in G_A$, then there exists $\sigma \in \cS_n$ and units $P_i$ such that $P|_{\Omega_i \times \Omega_{\sigma(i)}} = P_i$ for all $i$. Now, let $D$ be the diagonal matrix such that $D|_{\Omega_i \times \Omega_i} = P_iU^{\sigma(i),i}$, and let $S \in S_n$ with $S|_{\Omega_i \times \Omega_{\sigma(i)}} = U^{i,\sigma(i)}$. Then, $P = DS$ as $U^{\sigma(i),i}U^{i,\sigma(i)} = I_{|\Omega_{\sigma(i)}|}$. 
Moreover, $P_i$ maps $C(A|_{C_i})$ to $C(A|_{C_{\sigma(i)}})$, so $P_iU^{\sigma(i),i} \in G_{A|_{C_i}}$ and hence, $D \in N$. Thus, $G_A = NS_n$.

\par Finally, we show $N$ is normal in $G_A$. Let $D \in N$ and $S \in S_n$ with $S|_{\Omega_i \times \Omega_{\sigma(i)}} = U^{i,\sigma(i)}$ for some $\sigma \in \cS_n$. It suffices to show $SDS^{-1}$ maps each connected component to itself. So, for all $i$,
\[(SDS^{-1})|_{\Omega_i \times \Omega_i} = U^{i,\sigma(i)}D|_{\Omega_{\sigma(i)} \times \Omega_{\sigma(i)}}U^{\sigma(i),i} \neq (-\infty)\] 
as $D|_{\Omega_{\sigma(i)} \times \Omega_{\sigma(i)}} \neq (-\infty)$ for all $i$. Hence, $SDS^{-1} \in N$ and $N$ is normal in $G_A$. Therefore, $G_A$ is the internal semidirect product of $N$ and $S_n$.
\end{proof}

\begin{theorem} \label{embeddable}
Let $A \in M_{r \times c}(\T)$ have full rank and $\cup_{i=1}^k \cC_i$ be a partition of $\cC_A$ such that $X,Y \in \cC_i$ if and only if $C(A|_X) \cong C(A|_Y)$. For each $1 \leq i \leq k$, let $\cC_i = \{C_{i,1},\dots,C_{i,n_i}\}$ and 
\[G_{\cC_i} = \{P \in G_A \colon P_{j,j} = 0 \text{ if } \omega_j \notin \cup_{m=1}^{n_i} C_{i,m} \}.\]
Then, $G_A$ is the internal direct product of $G_{\cC_1}, \dots, G_{\cC_{k-1}}$, and $G_{\cC_k}$, that is, $G_A = \prod_{i=1}^k G_{\cC_i}$.
\end{theorem}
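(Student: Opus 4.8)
The plan is to verify directly the defining conditions for $G_A$ to be the internal direct product of the subgroups $G_{\cC_1},\dots,G_{\cC_k}$: that each $G_{\cC_i}$ is a subgroup, that elements of distinct factors commute, that the factors generate $G_A$, and that each factor meets the product of the others only in $I_r$. The observation that makes all the ``easy'' conditions transparent is that the defining constraint $P_{j,j}=0$ for $\omega_j\notin\bigcup_m C_{i,m}$ forces every $P\in G_{\cC_i}$ to be the identity on all rows outside $\Omega_{(i)}:=\bigcup_{m=1}^{n_i}(\Omega\cap C_{i,m})$, since a monomial matrix whose row $j$ has a $0$ on the diagonal has its unique finite entry there. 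Consequently, for $i\neq j$, an element of $G_{\cC_i}$ and an element of $G_{\cC_j}$ act nontrivially only on the disjoint row sets $\Omega_{(i)}$ and $\Omega_{(j)}$ and hence commute; and since every element of $G_{\cC_j}$ with $j\neq i$ is the identity on $\Omega_{(i)}$ while every element of $G_{\cC_i}$ is the identity off $\Omega_{(i)}$, the intersection $G_{\cC_i}\cap\prod_{j\neq i}G_{\cC_j}$ is trivial. That each $G_{\cC_i}$ is a subgroup is immediate, since the support constraint is preserved under products and inverses.

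The substance of the proof is showing that the $G_{\cC_i}$ generate $G_A$, that is, that every $P\in G_A$ factors as $P=\prod_{i=1}^k P^{(i)}$ with $P^{(i)}\in G_{\cC_i}$. Here I would reuse the analysis in the proof of Proposition~\ref{prop:semidirectsamecol}: any $P\in G_A$ maps connected components to connected components, inducing a permutation $\sigma$ of $\cC_A$ whose diagonal block carries $C(A|_C)$ isomorphically onto $C(A|_{\sigma(C)})$ for each component $C$. Thus $C(A|_C)\cong C(A|_{\sigma(C)})$, so $\sigma$ preserves the partition of $\cC_A$ into the isomorphism classes $\cC_1,\dots,\cC_k$; in particular $\sigma(\Omega_{(i)})=\Omega_{(i)}$ for every $i$. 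Defining $P^{(i)}$ to agree with $P$ on the blocks indexed by $\Omega_{(i)}$ and to be the identity elsewhere, each $P^{(i)}$ is a genuine unit and $P=\prod_i P^{(i)}$ as matrices.

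It remains -- and this is the main obstacle -- to show each $P^{(i)}$ actually lies in $G_A$, equivalently that $C(P^{(i)}A)=C(A)$. I would exploit that $A$ is block diagonal with respect to the components, and hence with respect to the coarser class partition $\Omega=\bigsqcup_i\Omega_{(i)}$, $\Theta=\bigsqcup_i\Theta_{(i)}$ (with $\Theta_{(i)}$ the analogous union of column sets): writing $A_{(i)}$ for the class-$i$ diagonal block, the column space splits as a product $C(A)=\prod_{i=1}^k C(A_{(i)})$ under the identification $\T^r=\prod_i\T^{\Omega_{(i)}}$, each factor being a subsemimodule containing the all-$(-\infty)$ vector since $C(A)$ is closed under scaling by $-\infty$. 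Establishing this product decomposition carefully is the technical heart of the argument. Granting it, $P$ and each $P^{(i)}$ are block diagonal for the class partition, so $PA$ and $P^{(i)}A$ are again class-block diagonal, whence $C(PA)=\prod_i C(\bar P^{(i)}A_{(i)})$, where $\bar P^{(i)}$ is the restriction of $P$ to $\Omega_{(i)}$.

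Since $C(PA)=C(A)=\prod_i C(A_{(i)})$ and the decomposition is coordinatewise, this forces $C(\bar P^{(i)}A_{(i)})=C(A_{(i)})$ for each $i$; substituting back into the product then gives $C(P^{(i)}A)=C(A)$. As $P^{(i)}$ is a unit we also have $R(P^{(i)}A)=R(A)$, so $P^{(i)}A\cH A$ by Proposition~\ref{prop:Greens} and therefore $P^{(i)}\in G_A$, indeed $P^{(i)}\in G_{\cC_i}$. Combining this factorisation with the commuting and trivial-intersection properties established at the outset shows that $G_A=\prod_{i=1}^k G_{\cC_i}$ is an internal direct product.
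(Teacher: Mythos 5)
Your proof is correct and follows essentially the same route as the paper: establish trivial intersections and commutativity from the disjoint supports, then use the fact that any $P\in G_A$ permutes connected components and (by the column-space isomorphism argument from Proposition~\ref{prop:semidirectsamecol}) preserves the isomorphism classes $\cC_i$, so that $P$ is block diagonal for the class partition. The one place you go beyond the paper is in explicitly verifying that each factor $P^{(i)}$ lies in $G_A$ via the coordinatewise decomposition $C(A)=\prod_i C(A_{(i)})$ --- a step the paper leaves implicit --- and that verification is sound.
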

\begin{proof}
\par By the definition of $G_{\cC_i}$, $G_{\cC_i} \cap G_{\cC_j} = \{I_r\}$ and $G_{\cC_i}$ and $G_{\cC_j}$ commute for all $i \neq j$. Thus, $\prod_{i=1}^k G_{\cC_i} \leq G_A$. 

\par Let $P \in G_A$. By a similar argument to the proof of Proposition~\ref{prop:semidirectsamecol}, $P$ maps connected components to connected components. So, suppose $P_{i,j} \neq -\infty$ for some $\omega_i \in C_{s,p}$ and $\omega_j \in C_{t,q}$. Then, as $A$ has full rank, by Theorem~\ref{thm:auto} and \ref{thm:units}, $P$ is an isomorphism of $C(A)$, and hence $C(A|_{C_{s,p}}) \cong C(A|_{C_{t,q}})$. Thus, $s = t$ and $G_A = \prod_{i=1}^k G_{\cC_i}$.
\end{proof}

\section{Eigenvalues} \label{sec:Eigenvalues}
In the previous section, we showed that when $A$ has full rank, $G_A$ is a direct product of semidirect products. We now further investigate the structure of $G_A$ by studying the eigenvalues of its elements when $A$ has full rank and the associated bipartite graph $\cB_A$ is connected.

\par For any square matrix $A \in M_n(\T)$, we can construct a weighted directed graph $\cG_A$ on $n$ points, with an edge from $i$ to $j$ if and only if $A_{i,j} \neq -\infty$, where the edge has weight $A_{i,j}$ if it exists. It is well known that $(A^t)_{i,j}$ is the maximum weighted path of length $t$ from $i$ to $j$ in $\cG_M$.
Moreover, when $A \in M_n(\T)$ is a unit, every cycle mean of $A$, the average weight of a path from a node to itself, is an eigenvalue of $A$ \cite[Chapter 4]{BMaxLinear}.

\par We begin by showing that when $A$ has full rank and $\cB_A$ is connected, $G_A$ is isomorphic to $\R \times \Sigma$, where $\Sigma$ is a finite group. To establish this result, we require the following two lemmas concerning the eigenvalues of the elements in $G_A$.

\begin{lemma} \label{lem:1eigen}
Let $A \in M_{r \times c}(\T)$ have full rank with $\cB_A$ connected. Then, each $P \in G_A$ has exactly one eigenvalue.
\end{lemma}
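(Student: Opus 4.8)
The plan is to describe each $P \in G_A$ explicitly as a monomial matrix and to translate the membership $P \in G_A$, via Lemma~\ref{dual}, into a colour‑preserving symmetry of the bipartite graph $\cB_A$; the single‑eigenvalue conclusion will then fall out of connectivity. First I would fix $P \in G_A$ and write $P_{i,\pi(i)} = \lambda_i \in \R$ (all other entries $-\infty$) for some $\pi \in \cS_r$, so that $\cG_P$ is the disjoint union of the cycles of $\pi$ and the cycle means of $P$ are the numbers $m_\pi(i) = \tfrac{1}{a_i}\sum_{s=0}^{a_i-1}\lambda_{\pi^s(i)}$, where $a_i$ is the length of the $\pi$‑cycle through $i$. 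Since $\cG_P$ is a disjoint union of cycles, the eigenvalues of $P$ are exactly these cycle means, so the lemma reduces to showing that all the $m_\pi(i)$ coincide.

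By Lemma~\ref{dual} there is a unique $Q \in {}_AG$, say with $Q_{j,\tau(j)} = \nu_j$ for some $\tau \in \cS_c$, such that $PA = AQ$. Comparing the $(i,\tau(j))$‑entries of both sides gives the key identity
\[
A_{\pi(i),\tau(j)} = A_{i,j} + \nu_j - \lambda_i \qquad \text{whenever } A_{i,j} \neq -\infty,
\]
together with the support condition that $A_{i,j} \neq -\infty$ if and only if $A_{\pi(i),\tau(j)} \neq -\infty$; in particular $(\pi,\tau)$ is an automorphism of $\cB_A$. An easy induction on this identity then yields, for every edge $\omega_i \to \theta_j$ of $\cB_A$ and every $t \geq 1$,
\[
A_{\pi^t(i),\tau^t(j)} = A_{i,j} + \sum_{s=0}^{t-1}\bigl(\nu_{\tau^s(j)} - \lambda_{\pi^s(i)}\bigr).
\]

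Taking $t = \mathrm{lcm}(a_i,b_j)$, where $b_j$ is the length of the $\tau$‑cycle through $j$, returns both indices to their starting points, so the left‑hand side equals $A_{i,j}$ and the two partial sums must cancel. Because the $\lambda$‑sum over these $t$ steps is $t\,m_\pi(i)$ and the $\nu$‑sum is $t\,m_\tau(j)$ (each traverses its cycle a whole number of times), this forces $m_\pi(i) = m_\tau(j)$: along every edge of $\cB_A$ the row cycle mean agrees with the column cycle mean. I would then define $v\colon V(\cB_A) \to \R$ by $v(\omega_i) = m_\pi(i)$ and $v(\theta_j) = m_\tau(j)$. The previous step says $v$ is constant across each edge, and since $A$ has full rank it has no empty row or column, so $\cB_A$ is a genuine graph on all of $\Omega \cup \Theta$ and, being connected, forces $v$ to be constant. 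In particular all the $m_\pi(i)$ are equal, so $P$ has a single cycle mean and hence exactly one eigenvalue.

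The main obstacle I anticipate is the bookkeeping in the iterated identity and the $\mathrm{lcm}$ cancellation — in particular making precise that each traversal contributes exactly the cycle mean so that the cancellation reads off as $m_\pi(i) = m_\tau(j)$. Once this \emph{cycle means match along edges} statement is established, the connectivity of $\cB_A$ finishes the argument immediately.
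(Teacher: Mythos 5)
Your proof is correct, but it takes a genuinely different route from the paper's. The paper argues by contradiction: if $P$ had two distinct eigenvalues, then some power $P^k$ would be a diagonal but non-scalar matrix; connectivity of $\cB_A$ then yields a column of $A$ with two finite entries in rows where the diagonal entries of $P^k$ differ, so iterating $P^k$ moves that column off $C(A)$, contradicting full column rank and $P^k \in G_A$. You instead work directly: you invoke Lemma~\ref{dual} to get $Q \in {}_AG$ with $PA = AQ$, read off the entrywise identity $\lambda_i + A_{\pi(i),\tau(j)} = A_{i,j} + \nu_j$, iterate it around the $\pi$- and $\tau$-cycles, and conclude that the row cycle mean and column cycle mean agree across every edge of $\cB_A$, whence connectivity forces all cycle means of $P$ to coincide. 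Both arguments are sound (your use of Lemma~\ref{dual} is legitimate here since it precedes this lemma and requires only the full-rank hypothesis already assumed, and your identification of the eigenvalues of a monomial matrix with its cycle means is the same fact the paper relies on). The paper's proof is shorter and needs only full column rank plus connectivity; yours is constructive rather than by contradiction and yields strictly more information --- namely that the cycle means of $P$ and of the paired $Q$ all agree --- which anticipates the kind of left/right eigenstructure the paper later develops in Lemma~\ref{lem:eigenforall} and Corollary~\ref{cor:eigenvec0}. The one point you should make fully explicit is the converse direction of ``eigenvalues are exactly the cycle means'' for monomial matrices (the cited reference gives only that every cycle mean is an eigenvalue), but for a monomial matrix this is routine: if all cycle means equal $\lambda$ then $P^k = (k\lambda)\otimes I$ for $k$ the lcm of the cycle lengths, so any eigenvalue must equal $\lambda$.
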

\begin{proof}
Clear for $r = 1$, so let $r > 1$. Suppose for a contradiction that $P$ has multiple distinct eigenvalues. Then, as $P$ is a monomial, $P^k$ is a diagonal matrix for some $k$. Moreover, as $P$ has distinct eigenvalues so does $P^k$, so $P^k$ is not a scalar matrix. 

As $\cB_A$ is connected, there exists a column $X$ of $A$ with $X_i,X_j \neq -\infty$ and $(P^k)_{i,i} \neq (P^k)_{j,j}$. Thus, $C((P^k)^nX) \neq C((P^k)^mX)$ for any $n \neq m$, and, as $A$ has full column rank, $C(A) \neq C(P^{k}A)$. Therefore, $P^k \notin G_A$ and hence $P \notin G_A$.
\end{proof}

\begin{lemma} \label{0cycle}
Let $A \in M_{r\times c}(\T)$ have full rank with $\cB_A$ connected. Suppose $P, Q \in G_A$ have 0 as an eigenvalue. Then, 0 is an eigenvalue of $PQ$.
\end{lemma}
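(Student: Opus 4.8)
The plan is to exploit the fact, guaranteed by Lemma~\ref{lem:1eigen}, that every element of $G_A$ has a single eigenvalue, and to identify this eigenvalue explicitly as the normalised sum of the finite entries of the monomial matrix. Write $r$ for the common size of the matrices in $G_A \subseteq M_r(\T)$. Since each $P \in G_A$ is a unit it is monomial, so there is a permutation $\pi \in \cS_r$ with a single finite entry $p_i := P_{i,\pi(i)} \in \R$ in each row $i$. Reading off the eigenvalue condition $P \otimes v = \lambda \otimes v$ along a cycle $(i, \pi(i), \pi^2(i), \dots)$ of $\pi$ forces $\lambda$ to be the mean of the entries $p_i$ around that cycle; conversely each such cycle mean is an eigenvalue, as recalled before the lemma. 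By Lemma~\ref{lem:1eigen} there is only one eigenvalue, so all cycle means coincide, and summing the identity (cycle length) $\times$ (cycle mean) over all cycles of $\pi$ gives $r \cdot \lambda(P) = \sum_{i=1}^r p_i$. In other words, on $G_A$ the unique eigenvalue is $\lambda(P) = \tfrac{1}{r}\sum_{i=1}^r p_i$.

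The second, purely combinatorial, step is to show that the total $\Sigma(P) := \sum_{i=1}^r p_i$ of the finite entries is additive under multiplication. If $P$ has permutation $\pi$ and entries $p_i$, and $Q \in G_A$ has permutation $\rho$ and entries $q_j := Q_{j,\rho(j)}$, then $PQ$ is again monomial, with permutation $\rho\pi$ and finite entry $p_i + q_{\pi(i)}$ in row $i$. Summing over $i$ and using that $\pi$ is a bijection yields $\Sigma(PQ) = \Sigma(P) + \Sigma(Q)$. Since $G_A$ is a group, $PQ \in G_A$, so Lemma~\ref{lem:1eigen} applies to $PQ$ as well and its unique eigenvalue is again $\tfrac{1}{r}\Sigma(PQ)$. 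Combining the two steps gives $\lambda(PQ) = \lambda(P) + \lambda(Q)$; with $\lambda(P) = \lambda(Q) = 0$ we conclude $\lambda(PQ) = 0$.

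Conceptually this says that $P \mapsto \lambda(P)$ is a homomorphism from $G_A$ to $(\R, +)$, so the elements with eigenvalue $0$ form its kernel and are in particular closed under products, which is exactly the assertion of the lemma. The only genuinely delicate point is the first step: justifying that the single eigenvalue supplied by Lemma~\ref{lem:1eigen} really is the average $\tfrac{1}{r}\Sigma(P)$ of the finite entries rather than some other cycle mean. This is where the hypothesis matters, since it is the collapse of all cycle means to a common value that lets the weighted average over cycles be rewritten as $\tfrac{1}{r}\Sigma(P)$; once this identification is in place, the remainder is routine bookkeeping with the monomial structure.
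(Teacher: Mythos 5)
Your proposal is correct and follows essentially the same route as the paper: both arguments use Lemma~\ref{lem:1eigen} to identify the unique eigenvalue of any element of $G_A$ with the normalised sum of its finite entries (since the total is the cycle-length-weighted sum of the cycle means, which all coincide), observe that this total is additive under multiplication of monomial matrices, and then apply Lemma~\ref{lem:1eigen} again to $PQ$. Your reformulation of this as the statement that $P \mapsto \lambda(P)$ is a homomorphism $G_A \to (\R,+)$ is a nice way to package the same computation.
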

\begin{proof}
By Lemma~\ref{lem:1eigen}, every cycle in $\cG_P$ and $\cG_Q$ has mean weight 0, and hence the (classical) sum of the finite entries in $P$ is 0, and the sum of the finite entries in $Q$ is also 0. 
Thus, the sum of the finite entries in $PQ$ is 0 as it is equal to the sum of the finite entries of $P$ and $Q$.
Further, $PQ \in G_A$ so applying Lemma \ref{lem:1eigen} again yields that every cycle in $\cG_{PQ}$ has the same mean weight. Since the sum of the entries is a weighted sum of cycle means, and the cycle means are all the same, we deduce that the cycle means are all 0. Hence, 0 is an eigenvalue of $PQ$.
\end{proof}

The following proof is similar to that of \cite[Theorem~5.8]{IJKTropicalGroups}, but is given here for completeness.
\begin{theorem} \label{thm:embeddablesingle}
Let $A \in M_{r \times c}(\T)$ have full rank with $\cB_A$ connected. Let $R = \{ \lambda \otimes I_r \colon \lambda \in \F\T\} \cong \R$, and
    \[\Sigma = \{ P \in G_A \colon P \text{ has eigenvalue 0}\}.\]
Then, each $G_A$ is the internal direct product of $R$ and $\Sigma$, where $\Sigma$ is a finite group embeddable in $\cS_r$.
\end{theorem}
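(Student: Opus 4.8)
The plan is to use the unique eigenvalue guaranteed by Lemma~\ref{lem:1eigen} as a coordinate that splits off the scalar part. First I would record that every $\lambda \otimes I_r$ is a unit whose left multiplication scales all columns of $A$ by $\lambda$, so that $C((\lambda \otimes I_r) A) = C(A)$ and hence $\lambda \otimes I_r \in G_A$; thus $R \le G_A$, and clearly $R \cong \R$ via $\lambda \otimes I_r \mapsto \lambda$. Since scalar matrices satisfy $(\lambda \otimes I_r) P = \lambda \otimes P = P(\lambda \otimes I_r)$ for every $P$, the subgroup $R$ is central in $G_A$. This centrality is exactly what will upgrade the decomposition $G_A = R\Sigma$ into an internal direct product, since it makes $\Sigma$ normal automatically once the other conditions hold.

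Next I would verify that $\Sigma$ is a subgroup. It contains $I_r$, and closure under multiplication is precisely Lemma~\ref{0cycle}. For inverses, note that if $P \in G_A$ has all cycle means equal to its unique eigenvalue $0$ (by Lemma~\ref{lem:1eigen}), then $P^{-1}$ is the monomial matrix for the inverse permutation with all finite entries negated, so each cycle of $\cG_{P^{-1}}$ has mean the negative of the corresponding cycle mean of $\cG_P$, hence $0$; thus $P^{-1} \in \Sigma$. The trivial intersection $R \cap \Sigma = \{I_r\}$ follows because the single eigenvalue of $\lambda \otimes I_r$ is $\lambda$ (the common self-loop mean), so membership in $\Sigma$ forces $\lambda = 0$.

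To see $G_A = R\Sigma$, take $P \in G_A$ with unique eigenvalue $\lambda$ from Lemma~\ref{lem:1eigen}. Then $(-\lambda) \otimes P = ((-\lambda) \otimes I_r) P$ lowers every cycle mean of $\cG_P$ by $\lambda$, so all cycle means of the result are $0$ and it lies in $\Sigma$; writing $P = (\lambda \otimes I_r)((-\lambda) \otimes P)$ exhibits $P \in R\Sigma$. Combined with centrality of $R$ and $R \cap \Sigma = \{I_r\}$, this shows $G_A$ is the internal direct product of $R$ and $\Sigma$. Finally, for the embedding into $\cS_r$ I would use the homomorphism $G_A \to \cS_r$ sending each monomial matrix to its underlying permutation; its kernel is the set of diagonal matrices in $G_A$, and by Lemma~\ref{lem:1eigen} such a matrix has all equal diagonal entries, i.e. lies in $R$. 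Restricting to $\Sigma$ and using $R \cap \Sigma = \{I_r\}$ shows this map is injective on $\Sigma$, so $\Sigma \hookrightarrow \cS_r$ and in particular $\Sigma$ is finite.

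The main obstacle is the closure of $\Sigma$ under products, but this is exactly the content of Lemma~\ref{0cycle}; once that is available the remaining steps are structural. The only other point requiring care is confirming that the scalar factor extracted in $P = (\lambda \otimes I_r)((-\lambda) \otimes P)$ annihilates the eigenvalue uniformly across all cycles, which again rests on Lemma~\ref{lem:1eigen} forcing all cycle means of a matrix in $G_A$ to coincide.
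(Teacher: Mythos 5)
Your proof is correct and follows essentially the same route as the paper's: the scalar factor $R$ is split off using the unique eigenvalue from Lemma~\ref{lem:1eigen}, closure of $\Sigma$ under products is delegated to Lemma~\ref{0cycle}, and the embedding $\Sigma \hookrightarrow \cS_r$ comes from the underlying-permutation homomorphism whose kernel is $R$. The only differences are cosmetic — you spell out the inverse-closure of $\Sigma$ and the centrality of $R$ slightly more explicitly than the paper does.
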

\begin{proof}
First note that $R \leq G_A$. For $A, B \in \Sigma$, clearly $A^{-1} \in \Sigma$ and, by Lemma \ref{0cycle}, $AB \in \Sigma$, so $\Sigma \leq G_A$.
By Lemma~\ref{lem:1eigen}, every diagonal matrix in $G_A$ is in $R$, and $I_r$ is the only matrix in $R$ with 0 as its eigenvalue. Thus, $R \cap \Sigma = \{ I_r\}$.

\par Let $P \in G_A$, by Lemma~\ref{lem:1eigen}, $P$ has an exactly one eigenvalue, $\lambda$. Then, we may write $P = (\lambda \otimes I_r)((-\lambda \otimes I_r)P)$, where $\lambda \otimes I_r \in R$ and $(-\lambda \otimes I_r)P \in \Sigma$.
Thus, as this holds for all $P \in G_A$, we get that $G_A = R\Sigma$. Furthermore, every element of $R$ commutes with every element of $\Sigma$, so $G_A$ is the internal direct product of $R$ and $\Sigma$.

\par Let $\theta: G_A \rightarrow \cS_r$ be the map that sends a monomial matrix to its underlying permutation. Then, by the definition of matrix multiplication, $\theta$ is a homomorphism of groups, with kernel $R$. Hence,
\[ \Sigma \cong G_A/R \cong \mathrm{Im}(\theta) \leq \cS_r. \qedhere\]
\end{proof}

Using the above theorem, we now describe the structure of $G_A$ up to isomorphism.
\begin{corollary} \label{cor:embeddableisomorphism}
    Let $A \in M_{r \times c}(\T)$ have full rank, and $\cup_{i=1}^k \cC_i$ be a partition of $\cC_A$ such that $X,Y \in \cC_i$ if and only if $C(A|_X) \cong C(A|_Y)$. For each $1 \leq i \leq k$, let $\cC_i = \{C_{i,1}, C_{i,2},\dots,C_{i,n_i}\}$. Then,
\[ G_A  \cong \prod_{i=1}^k( (\R \times G_i) \wr \cS_{n_i})\]
where $G_i= \{ P \in G_{A|_{C_{i,1}}} \colon P \text{ has the eigenvalue 0}\}$ are finite groups embeddable in the symmetric group $\cS_{r_i}$, where $r_i$ is the number of rows in $C_i$. 
\end{corollary}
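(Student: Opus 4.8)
The plan is to chain together the three structural results already established --- Theorem~\ref{embeddable}, Proposition~\ref{prop:semidirectsamecol}, and Theorem~\ref{thm:embeddablesingle} --- and then to recognise the resulting iterated semidirect product as a wreath product. First I would invoke Theorem~\ref{embeddable} to write $G_A = \prod_{i=1}^k G_{\cC_i}$, reducing the problem to identifying each factor $G_{\cC_i}$. Setting $B_i = A|_{\cup_{m=1}^{n_i} C_{i,m}}$, I would argue that $G_{\cC_i} \cong G_{B_i}$: an element of $G_{\cC_i}$ is block-diagonal with respect to the splitting of $\cC_i$ from its complement and acts as the identity on the complement, so it is determined by, and determines, a unit in $G_{B_i}$. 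Since $\cC_{B_i} = \{C_{i,1},\dots,C_{i,n_i}\}$ and these components have pairwise isomorphic column spaces by the definition of the partition, $B_i$ satisfies the hypotheses of Proposition~\ref{prop:semidirectsamecol}, giving $G_{B_i} \cong \bigl(\prod_{m=1}^{n_i} G_{A|_{C_{i,m}}}\bigr) \rtimes \cS_{n_i}$.

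Next I would analyse each factor $G_{A|_{C_{i,m}}}$. Since $C_{i,m}$ is a connected component of $\cB_A$, the graph $\cB_{A|_{C_{i,m}}}$ is connected, and $A|_{C_{i,m}}$ has full rank by Remark~\ref{rem:reducedrank}; hence Theorem~\ref{thm:embeddablesingle} applies and yields $G_{A|_{C_{i,m}}} \cong \R \times \Sigma_{i,m}$, with $\Sigma_{i,m} = \{P \in G_{A|_{C_{i,m}}} \colon P \text{ has eigenvalue } 0\}$ finite and embeddable in $\cS_{r_i}$. Because all components in $\cC_i$ share an isomorphic column space, the chain $G_{A|_{C_{i,m}}} \cong \Gamma_{H_{A|_{C_{i,m}}}} \cong \mathrm{Aut}(C(A|_{C_{i,m}}))$ coming from Theorems~\ref{thm:auto} and~\ref{thm:units} shows all these groups are isomorphic, so I may write each as $\R \times G_i$ with $G_i = \Sigma_{i,1}$, matching the definition in the statement. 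This gives $G_{\cC_i} \cong (\R \times G_i)^{n_i} \rtimes \cS_{n_i}$.

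The crux of the argument --- and the step I expect to demand the most care --- is upgrading this abstract semidirect product to the wreath product $(\R \times G_i) \wr \cS_{n_i}$; that is, checking that, after identifying all $n_i$ factors with the single group $\R \times G_i$, the conjugation action of the $\cS_{n_i}$-complement is precisely the coordinate-permutation action. I would use the explicit block formula from the proof of Proposition~\ref{prop:semidirectsamecol}: for $S$ corresponding to $\sigma \in \cS_{n_i}$ and $D \in N$, the relevant block satisfies $(SDS^{-1})|_{\Omega_i \times \Omega_i} = U^{i,\sigma(i)}\, D|_{\Omega_{\sigma(i)} \times \Omega_{\sigma(i)}}\, U^{\sigma(i),i}$. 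Transporting this through the fixed isomorphisms $\iota_m \colon G_{A|_{C_{i,m}}} \to \R \times G_i$, $P \mapsto U^{1,m} P (U^{1,m})^{-1}$, and applying the cocycle identity $U^{i,j} = (U^{1,i})^{-1}U^{1,j}$ together with $U^{1,i}U^{i,1} = I$, the conjugated block lands at coordinate $\sigma(i)$ with no residual twisting, so the induced action is pure permutation of the coordinates. This identifies $G_{\cC_i}$ with $(\R \times G_i)\wr \cS_{n_i}$, and assembling over $i$ delivers $G_A \cong \prod_{i=1}^k\bigl((\R \times G_i)\wr \cS_{n_i}\bigr)$. The finiteness of each $G_i$ and its embedding in $\cS_{r_i}$ are exactly the corresponding assertions of Theorem~\ref{thm:embeddablesingle}.
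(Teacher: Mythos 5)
Your proof is correct and follows essentially the same route as the paper, whose argument is exactly the chain Theorem~\ref{embeddable} $\to$ Proposition~\ref{prop:semidirectsamecol} $\to$ Theorem~\ref{thm:embeddablesingle} (together with Theorems~\ref{thm:auto} and~\ref{thm:units} to see that $G_{A|_X} \cong G_{A|_Y}$ for $X,Y \in \cC_i$). Your careful verification that the $\cS_{n_i}$-conjugation action is pure coordinate permutation via the cocycle identity $U^{i,j} = (U^{1,i})^{-1}U^{1,j}$ is a detail the paper leaves implicit, and it checks out.
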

\begin{proof}
    By Theorems~\ref{thm:auto} and \ref{thm:units}, $G_{A|_X} \cong G_{A|_Y}$ for any $X,Y \in \cC_i$. Thus, by Proposition~\ref{prop:semidirectsamecol} and Theorems~\ref{embeddable} and \ref{thm:embeddablesingle}, we are done.
\end{proof}

\begin{lemma} \label{lem:eigenforall}
    Let $A \in M_{r \times c}(\T)$ have full rank with $\cB_A$ connected. Then, there exist vectors $u \in \F\T^r$ and $v \in \F\T^c$ such that $u$ is a (right) eigenvector for all $P \in G_A$ and $v$ is a (left) eigenvector for all $Q \in {}_AG$.
\end{lemma}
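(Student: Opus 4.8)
The plan is to exploit the decomposition $G_A \cong \R \times \Sigma$ furnished by Theorem~\ref{thm:embeddablesingle}, where $\Sigma$ is a \emph{finite} group of monomial matrices each having eigenvalue $0$, together with a tropical group-averaging construction. The crucial feature is that $\Sigma$ is finite: a tropical sum (maximum) over $\Sigma$ stays finitary, whereas summing over the infinite scalar factor $R \cong \R$ would force $+\infty$ entries. Splitting off $R$ in Theorem~\ref{thm:embeddablesingle} is exactly what makes the averaging trick viable, and this is the real crux of the argument; the rest is routine verification using distributivity of $\otimes$ over $\oplus$ and the group (bijection) property.

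First I would construct the candidate eigenvector. Fix any finitary vector $w \in \F\T^r$ (for concreteness the all-zeros vector) and set
\[ u = \bigoplus_{P \in \Sigma} P \otimes w. \]
Since each $P \in \Sigma$ is monomial, $P \otimes w$ is finitary, and as $\Sigma$ is finite this is a finite maximum of finitary vectors, so $u \in \F\T^r$; in particular $u$ is not the $-\infty$ vector, so it is a genuine candidate eigenvector. Next I would verify invariance under $\Sigma$: for any $Q \in \Sigma$, using distributivity of tropical matrix multiplication over $\oplus$ and associativity,
\[ Q \otimes u = \bigoplus_{P \in \Sigma} (QP) \otimes w = \bigoplus_{P' \in \Sigma} P' \otimes w = u, \]
where the middle equality holds because $P \mapsto QP$ is a bijection of $\Sigma$. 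Hence $Q \otimes u = 0 \otimes u$, so $u$ is an eigenvector of every element of $\Sigma$ with eigenvalue $0$.

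Finally I would extend to all of $G_A$. By Theorem~\ref{thm:embeddablesingle} each $P \in G_A$ factors as $P = (\lambda \otimes I_r) S$ with $\lambda \in \F\T$ and $S \in \Sigma$, whence
\[ P \otimes u = (\lambda \otimes I_r) \otimes (S \otimes u) = (\lambda \otimes I_r) \otimes u = \lambda \otimes u, \]
so $u$ is a common (right) eigenvector for all of $G_A$ (with $P$-dependent eigenvalue $\lambda$, which by Lemma~\ref{lem:1eigen} is the unique eigenvalue of $P$). The statement for $v$ and ${}_AG$ then follows by the dual argument: applying the dual of Theorem~\ref{thm:embeddablesingle} to the row space (equivalently, running the above on transposes) gives ${}_AG \cong \R \times \Sigma'$ with $\Sigma'$ finite, and averaging a finitary row vector over $\Sigma'$ on the right produces the desired common left eigenvector $v \in \F\T^c$.

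I do not expect a serious obstacle beyond bookkeeping, provided the finiteness of $\Sigma$ is invoked exactly where needed to keep $u$ (and $v$) inside $\F\T^r$ (resp.\ $\F\T^c$); the only point requiring a word of care is confirming that $u$ is nonzero, which is immediate since $u$ is finitary.
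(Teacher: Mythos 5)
Your proof is correct, but it takes a genuinely different route from the paper's. The paper first establishes a rigidity property of $\Sigma = \{P \in G_A : P \text{ has eigenvalue } 0\}$ — namely that if $P, P' \in \Sigma$ both have a finite entry in position $(i,j)$ then $P_{i,j} = P'_{i,j}$, proved by observing that otherwise $P'P^{-1} \in \Sigma$ would have a cycle mean strictly greater than $0$ — and then builds $u$ recursively, reading its entries off the finite entries of the matrices in $\Sigma$; the eigenvector identity is then verified coordinate-by-coordinate using this rigidity. You instead use a tropical group-averaging construction, $u = \bigoplus_{P \in \Sigma} P \otimes w$ with $w$ the all-zeros vector, and the invariance $Q \otimes u = u$ follows at once from distributivity of $\otimes$ over $\oplus$ and the fact that $P \mapsto QP$ permutes $\Sigma$. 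Both arguments lean on Theorem~\ref{thm:embeddablesingle} (the paper for the subgroup structure of $\Sigma$ and the factorisation $P = (\lambda \otimes I_r)S$; you additionally for the finiteness of $\Sigma$, which is what keeps the tropical average inside $\F\T^r$), and both dispose of $v$ and ${}_AG$ by duality. Your argument is shorter and avoids the entry-level bookkeeping; the paper's yields a more explicit $u$ whose coordinates are literally entries of matrices in $\Sigma$, though nothing downstream (e.g.\ Corollary~\ref{cor:eigenvec0}) appears to need that explicitness. I see no gap in your version.
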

\begin{proof}
    We show that $u$ exists for $G_A$, with the case for $v$ and ${}_AG$ being dual. By Lemma~\ref{lem:1eigen}, each $P \in G_A$ has exactly one eigenvalue, so define
    \[\Sigma = \{ P \in G_A \colon P \text{ has eigenvalue } 0 \}.\]
    By Theorem~\ref{thm:embeddablesingle}, $\Sigma$ is a subgroup $G_A$ and for each $P \in G_A$ there exists $\lambda \in \F\T$ such that $(\lambda \otimes I_r)P \in \Sigma$.

    \par Let $P,P' \in \Sigma$ with $P_{i,j}, P'_{i,j} \neq -\infty$. Without loss of generality, suppose $P_{i,j} < P'_{i,j}$, then $(P'P^{-1})_{i,i} = P'_{i,j}P^{-1}_{j,i} > 0$. Thus, we obtain a contradiction as $P'P^{-1} \in \Sigma$ has a cycle mean, and hence an eigenvalue, greater than 0. Therefore, for $P,P' \in \Sigma$, if $P_{i,j}, P'_{i,j} \neq -\infty$ for some $i$ and $j$, then $P_{i,j} = P'_{i,j}$.
    \par We define $u$ recursively. Let $u_1 = 0$, then for all $P \in \Sigma$ such that $P_{t,1} \neq -\infty$, let $u_t = P_{t,1}$. Then, let $k$ be the least such that $u_k$ is not yet defined, and let $u_k = 0$, and then for all $P \in \Sigma$ such that $P_{t,k} \neq -\infty$, let $u_t = P_{t,k}$. Repeat this until $u$ is fully defined. By the previous paragraph, $u$ is well-defined.

    \par If $Q \in G_A$, then $Q = (\lambda \otimes I_r)P$ for some $\lambda \in \F\T$ and $P \in \Sigma$. Thus, to show that $u$ is an eigenvector for all $Q \in G_A$, it suffices to show $u$ is an eigenvector for all $P \in \Sigma$.
    So, let $P \in \Sigma$ and $\sigma \in \cS_r$ such that $P_{i,\sigma(i)} \neq -\infty$ for all $i$. Then, 
    \[ (P \otimes u)_i =  P_{i,\sigma(i)} + u_{\sigma(i)} \text{ for all } i.\]
    Let $1 \leq i \leq r$ and $k$ be the least such that there exists an $R \in \Sigma$ with $R_{i,k} \neq -\infty$. Then, $u_i  = R_{i,k}$, by the definition of $u$. Putting this together with the above equality we obtain 
    \begin{align*}
        (P \otimes u)_i &= (R_{i,k} + (R^{-1})_{k,i}) + P_{i,\sigma(i)} + u_{\sigma(i)}, \\
        &= u_i + (R^{-1}P)_{k,\sigma(i)} + u_{\sigma(i)}, \\
        &= u_i - (P^{-1}R)_{\sigma(i),k} + u_{\sigma(i)}, \\
        &= u_i - u_{\sigma(i)} + u_{\sigma(i)}, \\
        &= u_i,
    \end{align*}
    as $R_{i,k} + (R^{-1})_{k,i} = 0$. Thus, $u$ is an eigenvector for all $P \in \Sigma$, and hence for all $Q \in G_A$.
\end{proof}

\begin{corollary} \label{cor:eigenvec0}
    Let $A \in M_{r \times c}(\T)$ have full rank with $\cB_A$ connected. Then, there exists $B \in M_{r \times c}(\T)$ with $B = UAV$ for some diagonal unit matrices $U,V$ such that $(0,\dots, 0)^T$ is a (right) eigenvector for all $P \in G_B$ and $(0, \dots, 0)$ is a (left) eigenvector for all $Q \in {}_BG$.   
\end{corollary}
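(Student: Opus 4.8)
The plan is to take the eigenvectors supplied by Lemma~\ref{lem:eigenforall} and then rescale the rows and columns of $A$ by diagonal units so that these eigenvectors become the all-zero vectors. Concretely, I would first apply Lemma~\ref{lem:eigenforall} to obtain $u \in \F\T^r$ and $v \in \F\T^c$, where $u$ is a right eigenvector for every $P \in G_A$ and $v$ is a left eigenvector for every $Q \in {}_AG$. Since every entry of $u$ and $v$ is finite, I can define diagonal unit matrices $U \in M_r(\T)$ and $V \in M_c(\T)$ by $U_{ii} = -u_i$ and $V_{jj} = -v_j$, and set $B = UAV$. Note that diagonal scaling does not change which entries are $-\infty$, so $B$ again has full rank with $\cB_B$ connected, and $G_B$, ${}_BG$ behave as in the previous lemmas.

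The key step is to relate the groups $G_B$ and ${}_BG$ to $G_A$ and ${}_AG$ by conjugation. For a unit $P$, left multiplication preserves row spaces, so membership in $G_B$ is governed entirely by column spaces; using the identities $C(UX) = U \otimes C(X)$ and $C(XV) = C(X)$ for diagonal units $U,V$, one computes $C\bigl((UPU^{-1})B\bigr) = U \otimes C(PA)$ while $C(B) = U \otimes C(A)$. Since $x \mapsto U \otimes x$ is a bijection, $UPU^{-1} \in G_B$ if and only if $P \in G_A$; that is, $G_B = U G_A U^{-1}$. The dual computation with row spaces and right scaling by $V$ gives ${}_BG = V^{-1}\,({}_AG)\,V$.

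Finally I would transfer the eigenvectors. If $P \otimes u = \lambda_P \otimes u$, then for $P' = UPU^{-1} \in G_B$ we have $P' \otimes (U \otimes u) = U \otimes P \otimes u = \lambda_P \otimes (U \otimes u)$, so $U \otimes u$ is a right eigenvector for every element of $G_B$; by the choice of $U$, $(U \otimes u)_i = -u_i + u_i = 0$, so this eigenvector is $(0,\dots,0)^T$. The symmetric argument shows that $v \otimes V = (0,\dots,0)$ is a left eigenvector for every element of ${}_BG$. I expect the only real care to be needed in the bookkeeping of the conjugation relations $G_B = U G_A U^{-1}$ and ${}_BG = V^{-1}({}_AG)V$, in particular tracking which side the scaling acts on and checking that the transformation preserves eigenvalues; once those relations are in hand, the rescaling that zeroes out $u$ and $v$ is immediate.
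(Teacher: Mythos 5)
Your proposal is correct and follows essentially the same route as the paper: both take $U,V$ diagonal with $-u,-v$ on the diagonal, set $B=UAV$, identify $G_B = UG_AU^{-1}$ and ${}_BG = V^{-1}({}_AG)V$, and transfer the eigenvectors, which become $(0,\dots,0)$ by construction. The only cosmetic difference is that the paper establishes the conjugation correspondence via Lemma~\ref{dual} (writing $UPU^{-1}B = BV^{-1}QV$), whereas you argue directly with $C(UX)=U\otimes C(X)$ and $C(XV)=C(X)$; both are valid.
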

\begin{proof}
By Lemma~\ref{lem:eigenforall}, there exists $u \in \F\T^r$, a (right) eigenvector for all $P \in G_A$, and $v \in \F\T^c$, a (left) eigenvector for all $Q \in {}_AG$. Let $U \in M_r(\T)$ and $V \in M_c(\T)$ be the diagonal matrices with $-u$ and $-v$ on the diagonal respectively. Define $B = UAV$, and note that as $U$ and $V$ are unit matrices, $B$ has full rank and $C(A) \cong C(B)$ by Proposition~\ref{prop:Greens}\textit{(iv)}.

\par Let $P \in G_A$. By Lemma~\ref{dual}, there exists a unique $Q \in {}_AG$ such that $PA = AQ$. Thus,
\[ UPU^{-1}B = UPAV = UAQV = BV^{-1}QV.  \]
Thus, as $G_A \cong G_B$, $P \in G_A$ if and only if $UPU^{-1} \in G_B$, and similarly, $Q \in {}_AG$ if and only if $V^{-1}QV \in {}_BG$.
\par We now show $\ul{0} = (0,\dots, 0)^T$ is a (right) eigenvector for all $P \in G_B$, with $(0,\dots,0)$ and ${}_BG$ being dual. 
Let $P = URU^{-1} \in G_B$ and $\sigma \in \cS_r$ such that $R_{i,\sigma(i)} \neq -\infty$ for all $i$. Note that $R \in G_A$, and, as $u$ is an eigenvector for $R$, 
$(R\otimes u)_i = R_{i,\sigma(i)} + u_{\sigma(i)} = \lambda + u_i,$
for some $\lambda \in \F\T$. Thus,
\[ (P \otimes \ul{0})_i = (URU^{-1} \otimes \ul{0})_i = - u_i + R_{i,\sigma(i)} + u_{\sigma(i)} = \lambda,\]
and hence, $\ul{0}$ is an eigenvector for all $P \in G_B$.
\end{proof}

\section{Classification of the Sch\"utzenberger groups} \label{sec:SchuzClassification}

In this section, we provide an exact characterisation of the Sch\"utzenberger groups of $M(\T)$. We do this by classifying the Sch\"utzenberger groups of $\cH$-classes corresponding to matrices with $n$ rows and $m$ columns for each $n,m \in \N$. We begin by presenting the following well-known result.

\begin{lemma} \label{lem:trivialautlem}
    For $1 \leq i \leq 3$, let $X_i \in M_{n_i \times m_i}(\T)$ such that $\mathrm{Aut}(C(X_i)) \cong \R$ and $k_i = \min(n_i,m_i)$. Then, 
    \begin{enumerate}[(i)]
        \item $C(X_1) \cong C(X_2)$ if $k_1 = k_2 = 1$, and
        \item $C(X_i) \cong C(X_j)$ for some $i \neq j$ if $k_1,k_2,k_3 \leq 2$.
    \end{enumerate}
\end{lemma}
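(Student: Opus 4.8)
The plan is to show that, once we impose $\mathrm{Aut}(C(X_i)) \cong \R$, the bound $\min(n_i,m_i) \le 2$ confines $C(X_i)$ to one of only \emph{two} isomorphism classes; part (i) identifies the class occurring when the minimum is $1$, and part (ii) is then a pigeonhole statement about three objects distributed into two classes. As a uniform first move I would apply Lemma~\ref{lem:reduced} to replace each $X_i$ by a full-rank $Z_i \in M_{r_i \times c_i}(\T)$ with $C(X_i) \cong C(Z_i)$; since $r_i \le n_i$ and $c_i \le m_i$ this preserves the bound $\min(r_i,c_i) \le \min(n_i,m_i) \le 2$, and by Theorem~\ref{thm:auto} we still have $\mathrm{Aut}(C(Z_i)) \cong \R$.

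For part (i), if $k_i = 1$ then $X_i$ is a single row or a single column. In either case $C(X_i)$ is generated by a single vector, which must have a finite entry (otherwise $\mathrm{Aut}(C(X_i))$ would be trivial, contradicting $\mathrm{Aut} \cong \R$); hence $C(X_i)$ is the free rank-one semimodule and $C(X_1) \cong \T \cong C(X_2)$.

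For part (ii) the substantive case is $\min(r_i,c_i) = 2$, where one of $r_i,c_i$ equals $2$. If $r_i = 2$ then $C(Z_i) \subseteq \T^2$; if instead $c_i = 2$, then $R(Z_i) \subseteq \T^2$, and by Corollaries~\ref{cor:CRSameAut} and \ref{cor:ColSameRowSame} the isomorphism type of $C(Z_i)$ is determined by that of $R(Z_i)$. Either way the problem reduces to a rank-two polytope $W \subseteq \T^2$. Projectivising by the scaling action identifies $\T^2$ with the tropical projective line $[-\infty,+\infty]$ and $W$ with a nondegenerate closed subinterval $[a,b]$. Reading automorphisms as monomial matrices via Theorems~\ref{thm:auto} and \ref{thm:units}, and using Corollary~\ref{cor:embeddableisomorphism} together with Theorem~\ref{thm:embeddablesingle}, the hypothesis $\mathrm{Aut}(W) \cong \R$ forces the associated bipartite graph to be connected and the finite part $\Sigma \hookrightarrow \cS_2$ to be trivial; equivalently, no anti-diagonal monomial preserves $W$. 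Since diagonal monomials act on $[-\infty,+\infty]$ as translations and anti-diagonal ones as reflections, this says exactly that $[a,b]$ admits no reflection symmetry, which for a nondegenerate interval holds precisely when $[a,b]$ is half-infinite. All half-infinite intervals lie in a single orbit under translations and reflections, so their polytopes are mutually isomorphic through monomial matrices; fixing one representative $\cV$, we obtain $W \cong \cV$ and hence $C(Z_i) \cong \cV$. With the class $\T$ from part (i), every admissible $C(X_i)$ is isomorphic to $\T$ or $\cV$, so among $X_1,X_2,X_3$ two share a class, giving $C(X_i) \cong C(X_j)$ for some $i \neq j$.

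The main obstacle I anticipate is the rank-two analysis and the verification that exactly two classes survive. One must check that $W$ is precisely the cone over its projective interval, so that scaling-closedness makes the orbit count of intervals rigorous; that the excluded bounded and bi-infinite intervals genuinely fail $\mathrm{Aut} \cong \R$ (the midpoint reflection squares to a scaling and so contributes $2$-torsion, while the bi-infinite case yields a non-abelian group, in line with $\Sigma \not\cong \{e\}$); and that the $c_i = 2$ subcase reduces faithfully to the $r_i = 2$ analysis via the row-space duality. Ruling out any third rank-two type is the crux of the argument.
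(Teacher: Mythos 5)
Your proposal is correct in substance and follows the same global strategy as the paper: show that the hypotheses confine $C(X_i)$ to exactly two isomorphism classes --- $\T$ (the $k_i=1$ class) and the ``half-infinite interval'' polytope, which in the paper is $C(B)$ for $B=\left(\begin{smallmatrix}0&0\\ -\infty&0\end{smallmatrix}\right)$ --- and then pigeonhole three objects into two classes. Where you differ is in how the rank-two classification is executed. You projectivise a full-rank two-row polytope to a closed subinterval of the tropical projective line, read diagonal units as translations and antidiagonal units as reflections, and conclude that $\mathrm{Aut}\cong\R$ forces the interval to be half-infinite (bounded intervals carry a midpoint reflection, the bi-infinite interval is all of $\T^2$ with $\mathrm{Aut}\cong\R\wr\cS_2$); the paper instead argues entry-by-entry, showing the matrix has at most one $-\infty$, normalising the first row to zero, and reducing to either $Y=\left(\begin{smallmatrix}0&y\\ y&0\end{smallmatrix}\right)$ (excluded via \cite[Theorem 4.4]{JK2by2}, your symmetric bounded interval) or $B$. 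Your geometric route is arguably more transparent about \emph{why} only one rank-two class survives, at the cost of having to justify that a column-rank-two polytope in $\T^2$ is faithfully encoded by its projective interval and is generated by the two extreme columns (which also yields the paper's observation that full column rank forces $c_i\le 2$); the paper's computation gets both facts for free from the explicit normal form. The reductions you use --- Lemma~\ref{lem:reduced} to pass to full rank, Corollaries~\ref{cor:CRSameAut} and \ref{cor:ColSameRowSame} to transpose the $c_i=2$ case, and Theorems~\ref{thm:units} and \ref{thm:embeddablesingle} to read automorphisms as monomial matrices with $\Sigma\le\cS_2$ --- are all legitimately available and are essentially the same inputs the paper draws on. The remaining details you flag are real but routine; none of them threatens the argument.
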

\begin{proof}
\par Let $A = (0)$, $B = \left(\begin{smallmatrix}
    0 & 0 \\
    -\infty & 0
\end{smallmatrix} \right)$, and $X \in M_{n \times m}(\T)$ have full rank with $\mathrm{Aut}(C(X)) \cong \R$. By \cite[Theorem 4.4]{JK2by2}, $\mathrm{Aut}(C(A)) \cong \R \cong \mathrm{Aut}(C(B))$. Without loss of generality suppose $n \leq m$, and note that, as $X$ has full column rank, it has no columns only containing $-\infty$. Thus, if $n = 1$, then $C(X) = C(A)$. 

\par So, suppose $n = 2$. If there exists $i,j,s,t$  with $j \neq t$ such that $X_{i,j},X_{s,t} = -\infty$, then either the $j$-th column is a scaling of the $t$-th column, or every column is the span of the $j$-th and $t$-th column. In the former case, we get a contradiction as $X$ has full column rank and, in the latter case, we get a contradiction as $C(X) = C(I_2)$ and, by Theorem~\ref{thm:units} and Corollary~\ref{cor:embeddableisomorphism}, $\mathrm{Aut}(C(I_2)) \cong G_{I_2} \cong \R \wr \cS_2$.
\par Thus, there is at most one $-\infty$ entry in $X$. Without loss of generality, suppose no $-\infty$ entries are in the first row. By scaling columns, we may assume $X_{1,j} = 0$ for all $j$; note that this does not change the column space. Let $i,j$ be such that $X_{2,i} = \min_l(X_{2,l})$ and $X_{2,j} = \max_l(X_{2,l})$, then every $x \in C(X)$ is a linear combination of the $i$-th and $j$-th column of $X$, and hence they generate $C(X)$. Thus, $X \in M_2(\T)$ as $X$ has full column rank.
If $X_{2,i} \neq -\infty$, then $X \cD Y$ for some $Y = \left( \begin{smallmatrix}
    0 & y \\
    y & 0
\end{smallmatrix}\right)$ with $y < 0$ as we can obtain $Y$ by scaling and permuting rows and columns of $X$. 
By \cite[Theorem~4.4]{JK2by2}, $\mathrm{Aut}(C(Y)) \cong \cS_2 \times \R$, giving a contradiction as $C(X) \cong C(Y)$.
If $X_{2,i} = -\infty$ then $B$ can be obtained by scaling the second row of $X$. Thus, $X \cD B$ and $C(X) \cong C(B)$.

\par Therefore, if $k_1 = k_2 = 1$, then $C(X_1) \cong C(A) \cong C(X_2)$. If $k_1,k_2,k_3 \leq 2$, then there exists $i \neq j$ such that $X_i$ and $X_j$ has the same row rank $r \in \{1,2\}$. If $r = 1$, then $C(X_i) \cong C(A) \cong C(X_j)$, and if $r = 2$, then $C(X_i) \cong C(B) \cong C(X_j)$.
\end{proof}


We say that a coloured bipartite directed graph $G = (U, V, E)$ is \emph{reducible} if there exists a disconnected node or there exist distinct nodes $x,y \in G$ such that 
\begin{align*}
    \text{for all } u \in U, \ (u,x) \in E &\text{ if and only if } (u,y) \in E, \\
    \text{for all } v \in V, \ (x,v) \in E &\text{ if and only if } (y,v) \in E,
\end{align*}
and, if $(u,x) \in E$ (resp. $(x,v) \in E$), then $(u,x)$ and $(u,y)$ (resp. $(x,v)$ and $(y,v))$ are the same colour. We say $G$ is \emph{irreducible} if it is not reducible.

\par Let $G$ be a finite group with faithful actions on sets $\Omega$ and $\Gamma$. We view $G$ as a subgroup $\cS_\Omega \times \cS_\Gamma$ under the diagonal action of these two actions.
We say that $G$ is \emph{paired 2-closed} on $\Omega$ and $\Gamma$ if $G$ contains every element of $\cS_\Omega \times \cS_\Gamma$ which preserves the orbits of pairs in $\Omega \times \Gamma$. If $|\Omega| = n$ and $|\Gamma| = m$, we say that $G$ is paired 2-closed on $n$ and $m$ points.

\par Equivalently, a group $G$ is paired 2-closed on $\Omega$ and $\Gamma$ if and only if $G$ is the full automorphism group an irreducible coloured bipartite directed graph on $\Omega$ and $\Gamma$ such that all the edges go from $\Omega$ to $\Gamma$.

\par Now, to classify the Sch\"utzenberger groups of $M_n(\T)$, we show that given a group $G$ that is paired 2-closed on $n$ and $m$, then we are able to construct $A \in M_{n \times m}(\F\T)$ with $\Gamma_{H_A} \cong \R \times G$.
\begin{lemma} \label{lem:SchuzOneDirection}
    Let $G$ be a group that is paired 2-closed on $n$ and $m$ points such that either $G$ is non-trivial or $n,m > 2$. Then, there exists $A \in M_{r \times c}(\F\T)$ such that,
    \begin{enumerate}[(i)]
        \item $r \leq n$ and $c \leq m$,
        \item $\Gamma_{H_A} \cong \R \times G$, 
        \item $A$ has full rank, and
        \item no row or column only contains a single repeated entry.
    \end{enumerate}
\end{lemma}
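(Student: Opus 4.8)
The plan is to realise $G$ as the colour‑preserving automorphism group of a \emph{connected} coloured bipartite graph and then read off the Sch\"utzenberger group from Theorems~\ref{thm:auto}, \ref{thm:units} and~\ref{thm:embeddablesingle}. First I would take the complete bipartite graph on $\Omega$ and $\Gamma$, so that the associated matrix has no $-\infty$ entries and $\cB_A$ is connected, and colour the edge from $\omega$ to $\gamma$ by the $G$‑orbit of the pair $(\omega,\gamma)\in\Omega\times\Gamma$, assigning a distinct real number to each orbit. Letting $A$ be the matrix whose $(i,j)$ entry is this colour gives $A\in M_{n\times m}(\F\T)$ with $\cB_A$ connected. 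Since $G$ is paired $2$-closed, an element of $\cS_\Omega\times\cS_\Gamma$ preserves the orbit partition---equivalently, preserves the colouring exactly---if and only if it lies in $G$; and since $G$ acts faithfully on $\Omega$ (resp.\ $\Gamma$), it contains no nonidentity element permuting only $\Gamma$ (resp.\ only $\Omega$), so $\cB_A$ has no twin columns or rows and is irreducible. Faithfulness likewise forbids a point‑stabiliser from acting transitively on the other side, so no row or column is forced to be monochromatic.

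The values attached to the orbits remain free, and the heart of the argument is to choose them so that three non‑degeneracy conditions hold at once: (a) $A$ has full rank; (b) no row or column is constant, giving~(iv); and (c) every permutation pair $(\pi,\kappa)\in\cS_\Omega\times\cS_\Gamma$ realised by some $P\in G_A$ is already colour‑exact, hence lies in $G$. For a fixed $(\pi,\kappa)\notin G$, requiring that $\pi,\kappa$ together with row and column scalings preserve $C(A)$ amounts to the solvability of an over‑determined system in the scalings whose data are differences of orbit‑values; thus each of (a), (b), (c) fails only for colourings satisfying one of finitely many nontrivial equalities among the orbit‑values, and avoiding these loci leaves a valid choice. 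Granting such a choice, Theorems~\ref{thm:auto} and~\ref{thm:units} give $\Gamma_{H_A}\cong G_A$, and, as $\cB_A$ is connected and $A$ has full rank, Theorem~\ref{thm:embeddablesingle} gives $G_A\cong\R\times\Sigma$ with $\Sigma$ the image in $\cS_\Omega$ of the row‑permutation map $\theta$. Condition (c) forces $\Sigma$ to be exactly the $\Omega$‑component of $G$, and faithfulness on $\Omega$ identifies this with $G$; hence $\Gamma_{H_A}\cong\R\times G$, with $r=n$ and $c=m$. Should the natural colouring fail to be full rank, I would instead pass to a full‑rank matrix via Lemma~\ref{lem:reduced}, which preserves $\Gamma_{H_A}$ and returns sizes $r\le n$, $c\le m$, yielding (i)--(iii).

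The main obstacle is precisely the combination of (a) and (c): forcing entries to agree along $G$-orbits strips away much of the freedom one wants for a genericity argument, so I must verify that the orbit‑constrained family of colourings still meets every open condition rather than lying inside a bad locus. I expect to control this by analysing, for each $(\pi,\kappa)\notin G$, the associated scaling system and showing it is inconsistent for generic orbit‑values, using that $(\pi,\kappa)$ moves at least one orbit and that the chosen orbit‑values are algebraically independent. The situations in which this genericity genuinely degenerates are exactly those with $G$ trivial and $\min(n,m)\le 2$, where distinct column spaces with automorphism group $\R$ collapse onto one another; these are excluded by the hypothesis that $G$ is nontrivial or $n,m>2$, and in the borderline small cases one instead appeals to Lemma~\ref{lem:trivialautlem}, which pins down all matrices whose column‑space automorphism group is merely $\R$.
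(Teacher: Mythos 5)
Your overall architecture is the same as the paper's: realise $G$ as the colour automorphism group of a coloured complete bipartite graph, take entry values indexed by orbits and forming a free abelian group (algebraically independent), and show that any $P \in G_A$ induces an orbit-preserving pair and hence a scaled permutation matrix from $G$. The genericity argument for your condition (c) is essentially the argument the paper carries out (it is long, but it does reduce to nontrivial additive relations among the orbit values). However, there is a genuine gap in your treatment of full rank, and it is not a technicality.

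First, tropical row/column rank deficiency is \emph{not} cut out by finitely many equalities among the entries: a column lies in the span of the others if and only if a certain system of \emph{inequalities} holds, so the set of rank-deficient matrices has nonempty interior. For instance, every $2\times 3$ matrix over $\F\T$ has column rank at most $2$ (the ``middle'' column is always in the span of the outer two), so no choice of values is full rank. Worse, for some paired 2-closed $G$ your orbit colouring of the \emph{complete} bipartite graph can never be full rank: take $G=\cS_2$ acting on $\Omega=\{\omega_1,\omega_2,\omega_3\}$ by swapping $\omega_1,\omega_2$ and on $\Theta=\{\theta_1,\theta_2\}$ by swapping; this is paired 2-closed on $3$ and $2$ points, and the orbit colouring forces $A=\left(\begin{smallmatrix} a&b\\ b&a\\ c&c\end{smallmatrix}\right)$, whose third row is constant (so your condition (iv) fails, contrary to your faithfulness argument) and is always a tropical combination of the first two (so full row rank fails for every choice of $a,b,c$). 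This is exactly why the paper's proof first \emph{deletes} nodes fixed by $G$ and orbits dominated under a preorder $\leq$ before assigning values --- and why the conclusion only asserts $r\leq n$, $c\leq m$ --- and then makes a highly non-generic choice of the $z_\gamma$ in nested intervals satisfying explicit inequalities to force full rank of what remains. Second, your fallback ``if full rank fails, apply Lemma~\ref{lem:reduced}'' does not close the gap: Lemma~\ref{lem:reduced} preserves $\Gamma_{H_A}$, but your identification $\Gamma_{H_A}\cong G_A\cong \R\times G$ (via Theorem~\ref{thm:units}, Lemma~\ref{dual} and Theorem~\ref{thm:embeddablesingle}) is only valid when $A$ already has full rank; without it, automorphisms of $C(A)$ need not permute the columns of $A$ at all, so the whole ``realisable permutation pairs'' computation is unavailable, and you have no control over the Sch\"utzenberger group of the reduced matrix. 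To repair the proof you need the paper's reduction step on the graph before choosing values, together with a non-generic value assignment that provably yields full rank.
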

\begin{proof}
\par As $G$ is paired 2-closed on $n$ and $m$ points, let $G = \mathrm{Aut}(B')$ where $B'$ is an irreducible coloured bipartite directed graph on $\Omega'$ and $\Theta'$ such that $|\Omega'| = n$ and $|\Theta'| = m$ with all edges going from $\Omega'$ to $\Theta'$.
We may assume that there is an edge from each $\omega \in \Omega'$ to each $\theta \in \Theta'$ as adding all missing edges and colouring them $\delta$, where $\delta$ is not the colour of any edge in $B'$, does not change the automorphism group of the graph.

\par Let $\Delta_{1},\dots,\Delta_{t}$ be the orbits of $\Omega'$ and $\Delta'_{1}, \dots, \Delta'_{t'}$ be the orbits of $\Theta'$ under the action of $G$ on $B'$. 
Next, further colour each edge, in $B'$, to indicate the orbits of the nodes the edge started and ended at, that is, for $x \in \Delta_{i}$ and $y \in \Delta'_{j}$, if the edge from $x$ to $y$ is coloured by $\gamma$ then instead colour it by $\gamma_{i,j}$. Note that this does not change the automorphism group of $B'$ as automorphisms preserve the orbits of the nodes.

\par For $u,v \in \Omega'$, we say that $u \leq v$ if whenever $(u,s)$ and $(u,t)$ are the same colour for some $s,t \in \Theta'$, we have that $(v,s)$ and $(v,t)$ are the same colour. 
Similarly, for $s,t \in \Theta'$, we say that $s \leq t$ if whenever $(u,s)$ and $(v,s)$ are the same colour for some $u,v \in \Omega'$, we have that $(u,t)$ and $(v,t)$ are the same colour. 

\par Now, if $G$ is non-trivial, remove all nodes of $B'$ which are fixed by $G$, and if there exists $i \neq j$ with $u \in \Delta_i$ and $v \in \Delta_j$ such that $u \leq v$, then remove all nodes in $\Delta_j$, and similarly if there exists $i \neq j$ with $u \in \Delta_i'$ and $v \in \Delta_j'$ such that $u \leq v$, then remove all nodes in $\Delta_j'$. Repeat this until no more nodes are removed.

The result of this process, $B$, is a coloured bipartite graph on $\Omega = \{\omega_{1},\dots,\omega_{r}\}$ and $\Theta = \{\theta_{1},\dots,\theta_{c}\}$ for some $r \leq n$ and $c \leq m$ such that there is an edge from each $\omega \in \Omega$ to each $\theta \in \Theta$. 
Moreover, $B$ is irreducible as if there exists $x,y \in B$ such that  $(v,x)$ and $(v,y)$ (resp. $(x,v)$ and $(y,v)$) are the same colour for all $v$, then $x \leq y$ and $y \leq x$, and hence either $x$ or $y$ would be removed.

Moreover, if $x \leq y$ for $x \in \Delta_i$ and $y \in \Delta_j$, then there exists $f\colon \Delta_i \to \Delta_j$ mapping $g \cdot x$ to $g \cdot y$ for $g \in \mathrm{Aut}(B')$. The map is well-defined and surjective as $x \leq y$. Thus, if $g\cdot x = g' \cdot x$ for some $g,g' \in \mathrm{Aut}(B')$, then $g \cdot y = g' \cdot y$. So, removing $\Delta_j$ does not, up to isomorphism, change the automorphism group of $B'$. Therefore, by applying the same logic for $x \in \Delta_i'$ and $y \in \Delta_j'$, we obtain that $\mathrm{Aut}(B) \cong G$.
Note that neither $\Omega$ or $\Theta$ are empty, and if $G$ is trivial, then $r = n$ and $c = m$.

Let $\Delta_{1},\dots,\Delta_{k}$ be the orbits of $\Omega$ and $\Delta'_{1}, \dots, \Delta'_{k'}$ be the orbits of $\Theta$ under the action of $\mathrm{Aut}(B) \cong G$ on $B$. Without loss of generality suppose $k \leq k'$, we may suppose this by considering the bipartite graph obtained by reversing the arrows and taking the transpose of the final matrix.

\par For each colour in $B$, $\gamma$, define $z_{\gamma} \in \F\T$ to be distinct such that together they form a basis for a free abelian group under addition.
Now, define $\cZ_{i,j}$ be the set of colours in $B$ from $\Delta_i$ to $\Delta_j'$, and $Z_{i,j} = \{ z_\gamma \colon \gamma \in \cZ_{i,j}\}$. Solely to guarantee that we construct a matrix with full rank, we have the following additional conditions on our choices for each $z_{\gamma}$, depending on the number of orbits.

\par If $k = 2$, then for each $\gamma \in \cZ_{1,j}$, let $z_{\gamma} \in [-0.1,0.1]$  such that for all $i < j$ with $|Z_{1,i}|,|Z_{1,j}| > 1$, we have that
\begin{equation}
\label{1stRowInequality} \min\{x-y\colon x \neq y \in Z_{1,i}\} > \max\{x-y\colon x \neq y \in Z_{1,j}\},
\end{equation} 
and for each $\gamma \in \cZ_{2,j}$, then let $z_{\gamma} \in [-j - 0.1, -j + 0.1]$ such that for $i < j$ with $|Z_{2,i}|,|Z_{2,j}| > 1$, we have that
\begin{equation}
\label{2ndRowInequality} \max\{x-y\colon x \neq y \in Z_{2,i}\} < \min\{x-y\colon x \neq y \in Z_{2,j}\}. 
\end{equation} 

\par If $k \neq 2$, then for each $\Delta_j'$ choose a node $\theta_{p_j} \in \Delta_j'$ and let $P_j = \cup_{t=1}^{m_j} Q_{j,t}$ be the partition of $\Delta_1$ such that $\omega_x,\omega_y \in Q_{j,t}$ if and only if $(\omega_x,\theta_{p_j})$ and $(\omega_y,\theta_{p_j})$ are the same colour. Define $N_{j,t} = \sum_{i=1}^{t-1} |Q_{j,i}|$.

\par Then, if $\gamma$ is the colour of the edge $(\omega_x,\theta_{p_j})$ for some $\omega_x \in Q_{j,t}$, let  $z_{\gamma} \in [-N_{j,t} -0.1, -N_{j,t} + 0.1]$.
Note that, all nodes in $\Delta_j'$ have the same colour edges entering them, so we have defined $z_\gamma$ for each $\gamma \in \cZ_{1,j}$.
If $\gamma \in \cZ_{i,j}$ for some $i > 1$, then, if $j = 1$, let $z_{\gamma} \in [-0.1,0.1]$ and, if $j > 1$, let
\[  (-1)^{i+j}(i+j)|\Delta_1| - 0.1 \leq z_{\gamma} \leq (-1)^{i+j}(i+j)|\Delta_1| + 0.1.\]
Now, define $A \in M_{r \times c}(\F\T)$ such that $A_{s,t} = z_{\gamma}$ if the edge $(\omega_{s},\theta_{t})$ in $B$ is coloured $\gamma$. 

Note that $A$ has no row or column containing only a single repeated entry, as the corresponding node in $B$ would be fixed by $G$, as $B$ is irreducible.
Moreover, if $\{\omega_i,\omega_j\} \subseteq \Delta_l$ for some $l$, then the $i$-th and $j$-th row contain the same entries, but if $\{\omega_i,\omega_j\} \not\subseteq \Delta_l$ for any $l$, then they have no entries in common. Dually, if $\{\theta_i,\theta_j\} \subseteq \Delta_{l}'$ for some $l$, then the $i$-th and $j$-th column contain the same entries, but if $\{\theta_i,\theta_j\} \not\subseteq \Delta_{l}'$ for any $l$, then they have no entries in common.

We aim to show that $A$ has full rank. 
Let $A_i$ be the $i$-th column of $A$. For a contradiction, suppose there exists $1 \leq t \leq c$ such that
\[A_t = \bigoplus_{j \neq t} \lambda_j \otimes A_j.\]
for some $\lambda_j \in \T$.
For $1 \leq l \leq k$, let $\omega_{s_l} \in \Delta_l$ such that $A_{s_l,t} \geq A_{i,t}$ for all $\omega_i \in \Delta_l$, and let $1 \leq q_l \leq c$ with $q_l \neq t$ such that $A_{s_l,t} = \lambda_{q_l} + A_{s_l,q_l}$.

Now, if $\{\theta_{q_l},\theta_t\} \subseteq \Delta_j'$ for some $l$ and $j$, then $\lambda_{q_l} \geq 0$ as $A_{s_l,t} \geq A_{i,t}$ for all $\omega_i \in \Delta_l$, and $A_{q_l}$ and $A_{t}$ restricted to $\Delta_l$ contain the same entries, so $A_{s_l,t} \geq A_{s_l,q_l}$. Moreover, $A_{q_l}$ and $A_t$ share the same entries and there are no repeated columns, so there exists $i$ such that $A_{i,q_l} > A_{i,t}$ giving a contradiction as $\lambda_{q_l} + A_{i,q_l} > A_{i,t}$.
So, suppose $\theta_t \in \Delta_u'$ and, for each $l$, suppose $\theta_{q_l} \in \Delta_{v_l}'$ for some $v_l \neq u$. 

For the $k \neq 2$ case, let $v = v_1$, $q = q_1$, and $s = s_1$. By relabelling nodes and changing the sets in $P_u$ and $P_v$, we may assume $\theta_t = \theta_{p_u}$ and $\theta_q = \theta_{p_v}$. 

Note that $\lambda_{q} \geq -0.2$ as $A_{s,t} \geq -0.1$ and $A_{s,q} \leq 0.1$. Next, suppose $N_{v,i} < N_{u,j} < N_{v,i+1}$ for some $i$ and $j$, then $A_t$ restricted to $\Delta_1$ contains $N_{u,j}$ elements greater than $-N_{u,j} + 0.1$ and hence at most $N_{u,j}$ elements greater than $-N_{v,i} -0.9$, as $N_{v,i}$ and $N_{u,j}$ are both integers. However, $A_q$ restricted to $\Delta_1$ contains $N_{v,i+1}$ elements greater than or equal to $-N_{v,i}-0.1$. 
Thus, we get a contradiction as $A_q + \lambda_q$ restricted to $\Delta_1$ has more entries greater than or equal to $-N_{v,i}-0.3$ than $A_t$ restricted to $\Delta_1$. Hence, if $N_{v,i} < N_{u,j}$, then $N_{v,i+1} \leq N_{u,j}$. 

As $A_{i,q} -0.2 \leq A_{i,q} + \lambda_q \leq A_{i,t}$, if $\omega_i \in Q_{u,t_u} \cap Q_{v,t_v}$ then we require that $N_{v,t_v} \leq N_{u,t_u}$. Thus, if $N_{v,j} = N_{u,i}$ and $N_{v,j+h} = N_{u,i+1}$, then $\cup_{b=0}^{h-1} Q_{v,j+b} = Q_{u,i}$. Hence, $\theta_q \leq \theta_t$, giving a contradiction, as we removed all nodes which were greater than any other node.

For the $k = 2$ case, note that $\lambda_{q_1} \geq -0.2$ as $A_{s_1,t},A_{s_1,q_1} \in [-0.1,0.1]$, and similarly $\lambda_{q_2} \geq v_2-u-0.2$ as $A_{s_2,t} \geq - u - 0.1$ and $A_{s_2,q_2} \leq - v_2  + 0.1$.
Furthermore, $A_{s_2,q_1} \geq -v_1 - 0.1$ and $A_{s_2,t} \leq -u + 0.1$ so, when $u > v_1$, we get a contradiction as $A_{s_2,q_1} + \lambda_{q_1} > A_{s_2,t}$. Similarly, $A_{s_1,q_2},A_{s_1,t} \in [-0.1,0.1]$ so, when $v_2 > u$, we get a contradiction as $A_{s_1,q_2} + \lambda_{q_2} > A_{s_1,t}$. Therefore, $v_2 < u < v_1$.

If $|Z_{1,u}| > 1$, then there exists $\omega_i \in \Delta_1$ such that $A_{i,t} < A_{s_1,t}$. Moreover, by (\ref{1stRowInequality}), $A_{s_1,t} - A_{i,t} > A_{s_1,q_1} - A_{i,q_1}$, and hence $A_{i,q_1} > A_{s_1,q_1} - A_{s_1,t} + A_{i,t}$.
Therefore, $\lambda_{q_1} + A_{i,q_1} > A_{i,t}$, as $\lambda_{q_1} + A_{s_1,q_1} = A_{s_1,t}$, giving a contradiction, so we may suppose $|Z_{1,u}| = 1$. Similarly, by (\ref{2ndRowInequality}), $|Z_{2,u}| = 1$.
However, this implies that $\theta_t$ is fixed by the automorphism group of $B$, giving a contradiction. 
Thus, in any case, $A$ has full column rank.

\par Let $R_i$ denote the $i$-th row of $A$.
For a contradiction, suppose there exists $1 \leq s \leq r$ such that
\[R_s = \bigoplus_{i \neq s} \lambda_i \otimes R_i.\]
for some $\lambda_i \in \T$.
For $1 \leq l \leq k'$, let $\theta_{t_l} \in \Delta_l'$ such that $A_{s,t_l} \geq A_{s,j}$ for all $\theta_j \in \Delta_l'$, and let $1 \leq p_l \leq r$ with $p_l \neq s$ such that $A_{s,t_l} = \lambda_{p_l} + A_{p_l,t_l}$.

If $\{\omega_{p_l},\omega_s\} \subseteq \Delta_i$ for some $l$ and $i$, then $\lambda_{p_l} \geq 0$ as $A_{s,t_l} \geq A_{s,j}$ for all $\theta_j \in \Delta_l'$, and $R_{p_l}$ and $R_s$ restricted to $\Delta_l'$ contain the same entries, so $A_{s,t_l} \geq A_{p_l,t_l}$. Moreover, $R_p$ and $R_s$ share the same entries and there are no repeated rows, so there exists $j$ such that $A_{p_l,j} > A_{s,j}$ giving a contradiction as $\lambda_{p_l} + A_{p_l,j} > A_{s,j}$. So, suppose $\{\omega_{p_l},\omega_s\} \not\subseteq \Delta_i$ for any $l$ and $i$, and note that we have shown the $k = 1$ case.

For the $k = 2$ case, if $\omega_s \in \Delta_1$ then $\omega_{p_2} \in \Delta_2$ and hence, $\lambda_{p_2} \geq 1.8$ as $A_{s,t_2} \geq -0.1$ and $A_{p_2,t_2} \leq -1.9$. For $j \in \Delta_1'$, $A_{p_2,j} \geq -1.1$ and $A_{s,j} \leq 0.1$, so we obtain a contradiction as $A_{p_2,j} + \lambda_{p_2} > A_{s,j}$. Similarly, if $\omega_s \in \Delta_2$ then $\omega_{p_1} \in \Delta_1$ and hence, $\lambda_{p_1} \geq -1.2$ as $A_{s,t_1} \geq -1.1$ and $A_{p_1,t_1} \leq 0.1$. For $j \in \Delta_2'$, $A_{p_1,j} \geq -0.1$ and $A_{s,j} \leq -1.9$, so we obtain a contradiction as $A_{p_1,j} + \lambda_{p_1} > A_{s,j}$.

For the $k > 2$ case, note that $\lambda_{p_1} \geq 0.8 - |\Delta_1|$ as $A_{s,t_1},A_{p_1,t_1} \in [0.9-|\Delta_1|,0.1]$. Then, as $\{\omega_{p_1},\omega_s\} \not\subseteq \Delta_i$ for any $i$, there exists $\theta_j \in \Delta_2' \cup \Delta_3'$ such that $A_{p_1,j} > A_{s,j}  + |\Delta_1| - 0.2$ which implies that $A_{p_1,j} + \lambda_{p_1} > A_{s,j}$ giving a contradiction. Thus, $A$ has full rank.
We now aim to show that $G_A \cong G \times \R$. 

\par If $r,c \leq 2$, then $G \cong \cS_2$ and $A = \left(\begin{smallmatrix}
        a & b \\
        b & a
    \end{smallmatrix}\right)$ for some $a,b \in \F\T$ with $a \neq b$. Then, $G_A \cong G \times \R$ by \cite[Theorem~4.4]{JK2by2} as $A \cH \left(\begin{smallmatrix}
        0 & -|b-a| \\
        -|b-a| & 0
    \end{smallmatrix}\right)$.
    So, suppose that $\max(r,c) > 2$.
    
    \par We now show that $G_A$ consists of exactly the scalings of permutation matrices corresponding to permutations of $\Omega$ under the action of $G$ on $B$.

    \par For $\nu \in \cS_\Omega \times \cS_\Theta$, we say $\nu = (\sigma,\tau)$ for $\sigma \in \cS_{r}$ and $\tau \in \cS_{c}$ if $\nu$ maps $\omega_i$ to $\omega_{\sigma(i)}$ and $\theta_j$ to $\theta_{\tau(j)}$.
    Now, let $P$ and $Q$ be the permutation matrices such that $P_{i,\sigma(i)} = 0$ and $Q_{i,\tau(i)} = 0$ for all $i$. 
    By the definition of $A$ and $B$, if $\nu = (\sigma,\tau) \in G$, then $PAQ^{-1} = A$, or equivalently, $PA = AQ$.
    Thus, if $\nu \in G$, then $(\lambda \otimes P)A \in H_A$, and hence $\lambda \otimes P \in G_A$ for all $\lambda \in \F\T$.

    Suppose $P \in G_A$ and $\sigma \in \cS_r$ such that $P_{i,\sigma(i)} = \lambda_i \in \F\T$ for all $1 \leq i \leq r$.
    Then, there exists $Q \in {}_AG$ and $\tau \in \cS_c$ such that $PA = AQ$ and $Q_{j,\tau(j)} = \mu_j \in \F\T$ for all $1 \leq j \leq c$.
    Thus, for all $i,j$,
    \begin{equation} \label{eq:entriesofunits}
        \lambda_i + A_{\sigma(i),\tau(j)} = (PA)_{i,\tau(j)} = (AQ)_{i,\tau(j)} = A_{i,j} + \mu_j.
    \end{equation}
    \par Let $\nu = (\sigma,\tau)$. If $\nu \in G$, then the edges $(\omega_i,\theta_j)$ and $(\omega_{\sigma(i)}, \theta_{\tau(j)})$ have the same colour, and hence $A_{i,j} = A_{\sigma(i),\tau(j)}$ for all $i$ and $j$. 
    Therefore, the above equation gives that $\lambda_i = \mu_j$ for all $i$ and $j$, so $P$ is a scaling of the permutation matrix corresponding to $\sigma$.
    If $\nu \notin G$, then there exist $\omega_s \in \Omega$ and $\theta_t \in \Theta$ such that the edges $(\omega_s,\theta_t)$ and $(\omega_{\sigma(s)},\theta_{\tau(t)})$ are different colours in $B$. Thus, by the definition of $A$, $A_{s,t} \neq A_{\sigma(s),\tau(t)}$, so let $A_{s,t} = a$ and $A_{\sigma(s),\tau(t)} = b$.

    \par Suppose $\{\omega_s,\omega_{\sigma(s)}\} \subseteq \Delta_l$ for some $l$. Then, the rows $s$ and $\sigma(s)$ contain the same entries.
    Thus, as $A_{\sigma(s),\tau(t)} = b$ but $A_{s,t} \neq b$, there exists $1 \leq q \leq c$ such that $A_{s,q} = b$ and $A_{\sigma(s),\tau(q)} \neq b$. By equation (\ref{eq:entriesofunits}), $\lambda_i = A_{i,t} - A_{\sigma(i),\tau(t)} + \mu_t$ for all $i$. So, again by equation (\ref{eq:entriesofunits}), when $j = q$,
    \begin{align*}
        (A_{i,t} - A_{\sigma(i),\tau(t)} + \mu_t) +  A_{\sigma(i),\tau(q)} &= A_{i,q} + \mu_q &&\text{for all } i \text{, and } \\
        (a - b + \mu_t) + A_{\sigma(s),\tau(q)} &= b + \mu_q,
    \end{align*}
    where the second equation is obtained when $i = s$.
    Putting these equations together, by equating $\mu_q$, we obtain that
    \[ a +  A_{\sigma(s),\tau(q)} + A_{\sigma(i),\tau(t)} + A_{i,q} = b + b + A_{i,t} + A_{\sigma(i),\tau(q)} \text{ for all } i. \]
    Therefore, as $A_{\sigma(s),\tau(q)} \neq b \neq a$ and the entries of $A$ form a basis for a free abelian group, we get that $A_{\sigma(i),\tau(t)} = b$ for all $i$, and hence the $\tau(t)$ column only contains $b$, giving a contradiction as $A$ contains no columns only containing a single repeated entry.

    \par Similarly, if $\{\theta_t,\theta_{\tau(t)}\} \subseteq \Delta'_{l'}$ for some $1 \leq l' \leq k'$, then this implies that the $\sigma(s)$ row only contains $b$, again giving a contradiction. 
    
    Thus, $\{\omega_s,\omega_{\sigma(s)}\} \not\subseteq \Delta_l$ and $\{\theta_t,\theta_{\tau(t)}\} \not\subseteq \Delta'_{l'}$ for any $1 \leq l \leq k$ and $1 \leq l' \leq k'$, and hence $d := A_{\sigma(s),t} \neq a,b$ and $e := A_{s,\tau(t)} \neq a,b,d$.
    Then, by (\ref{eq:entriesofunits}), when $i \in \{\sigma(s),s\}$ and $j = \tau(t)$, 
    \begin{align*}
    (d - A_{\sigma^2(s),\tau(t)} + \mu_t) +  A_{\sigma^2(s),\tau^2(t)} &= b + \mu_{\tau(t)} \text{, and} \\
    (a - b + \mu_t) + A_{\sigma(s),\tau^2(t)} &= e + \mu_{\tau(t)}.
    \end{align*}
    as $\lambda_i = A_{i,t} - A_{\sigma(i),\tau(t)} + \mu_t$ for all $i$ by (\ref{eq:entriesofunits}). Putting these equations together, by equating $\mu_{\tau(t)}$, we obtain that
    \[ a + A_{\sigma(s),\tau^2(t)} + A_{\sigma^2(s),\tau(t)} = d + A_{\sigma^2(s),\tau^2(t)} + e. \]
    As $\theta_t$ and $\theta_{\tau(t)}$ are in different $\Delta'$, we have that $d \neq A_{\sigma^2(s),\tau(t)}$. Thus, $d = A_{\sigma(s),\tau^2(t)}$ as the entries of $A$ form a basis of a free abelian group. Hence, $\mu_{\tau(t)} = a - b + \mu_t + d - e$. Then, by equation (\ref{eq:entriesofunits}), when $j = \tau(t)$, we get that, for all $i$, 
    \[ (A_{i,t} - A_{\sigma(i),\tau(t)} + \mu_t) + A_{\sigma(i),\tau^2(t)} = A_{i,\tau(t)} + (a - b + \mu_t + d - e). \]
    Rearranging, we obtain that, for all $i$,
    \[ A_{i,t} + A_{\sigma(i),\tau^2(t)} + b + e = a + d + A_{\sigma(i),\tau(t)} + A_{i,\tau(t)}.\]
    Thus, as the entries of $A$ are a basis for a free abelian group, 
    \[ \{A_{i,t},A_{\sigma(i),\tau^2(t)}\} = \{a,d\} \text{ and }
    \{A_{i,\tau(t)},A_{\sigma(i),\tau(t)}\} = \{b,e\} \text{ for all }i.\] However, since $\omega_s$ and $\omega_{\sigma(s)}$ are in different $\Delta$, $a$ and $d$ do not appear in the same row, and neither do $b$ and $e$. Thus, $A_{i,t} = A_{i,\tau^2(t)}$ for all $i$ and, as $B$ is irreducible, $t = \tau^2(t)$. Now, a dual argument shows that
    \[ \{A_{s,j},A_{\sigma^2(s),\tau(j)}\} = \{a,e\} \text{ and } \{A_{\sigma(s),j},A_{\sigma(s),\tau(j)}\} = \{b,d\} \text{ for all }j. \]
    and hence that $\sigma^2(s) = s$.

    \par Then, as each row either contains an $a$ or $d$ and each column either contains an $a$ or $e$, we obtain that $k = k' = 2$, that is, under the action of $G$ on $B$, the orbits of $\Omega$ are $\Delta_1$ and $\Delta_2$, and the orbits of $\Theta$ are $\Delta'_1$ and $\Delta'_2$.
    Moreover, as we know the content of the $s$ and $\sigma(s)$ row and the $t$ and $\tau(t)$ columns, we can see that each $A_{i,j}$ is exactly determined by which orbits $\omega_i$ and $\theta_j$ are contained in. 
    Thus, if any orbit is not a singleton, then $A$ contains either a repeated row or column. Hence, as $\max(r,c) > 2$, we obtain a contradiction.
    
    Therefore, $P \in G_A$ if and only if $P$ is a scaling of a tropical permutation matrix corresponding to a permutation of $\Omega$ under the action of $G$ on $B$, and hence $G_A \cong G \times \R$.
\end{proof}

With the above lemma, we are now able to exactly classify the groups which, up to isomorphism, arise as a Sch\"utzenberger group of $M_{n \times m}(\T)$.

\begin{theorem} \label{thm:SchuzClassification}
The Sch\"utzenberger groups of $M_{n \times m}(\T)$ are, up to isomorphism, exactly groups of the form $\prod_{\alpha=1}^z((\R \times G_\alpha) \wr \cS_{h_\alpha})$ where, for each $1 \leq \alpha \leq z$, there exists $n_\alpha,m_\alpha \in \N$ such that
\begin{enumerate}[(i)]
    \item $\sum_{\alpha=1}^z n_{\alpha}h_{\alpha} \leq n$,
    \item $\sum_{{\alpha}=1}^z m_{\alpha}h_{\alpha} \leq m$,
    \item each $G_{\alpha}$ is paired 2-closed on $n_{\alpha}$ and $m_{\alpha}$ points,
    \item at most one $\alpha$ has $\min(n_\alpha,m_\alpha) = 1$, and
    \item if $G_{\alpha_1},G_{\alpha_2},G_{\alpha_3}$ are trivial for distinct $\alpha_i$, then $\min(n_{\alpha_i},m_{\alpha_i}) > 2$ for some $i$.
\end{enumerate}
\end{theorem}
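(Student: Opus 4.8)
The plan is to prove the two inclusions separately, in each case passing to a full rank representative and reading off the structure from Corollary~\ref{cor:embeddableisomorphism}. For the \emph{necessity} direction, I would begin with a Sch\"utzenberger group $\Gamma_{H_A}$ for some $A \in M_{n \times m}(\T)$. By Lemma~\ref{lem:reduced} there is a full rank matrix $A' \in M_{r \times c}(\T)$ with $r \leq n$ and $c \leq m$ and $\Gamma_{H_A} \cong \Gamma_{H_{A'}}$, and by Theorems~\ref{thm:auto} and~\ref{thm:units} the latter is isomorphic to $G_{A'}$. Partitioning $\cC_{A'}$ into classes $\cC_1, \dots, \cC_z$ according to the isomorphism type of the column spaces $C(A'|_X)$ and applying Corollary~\ref{cor:embeddableisomorphism} directly expresses $G_{A'}$ as $\prod_{\alpha=1}^z((\R \times G_\alpha) \wr \cS_{h_\alpha})$, where $h_\alpha = |\cC_\alpha|$ and $G_\alpha$ is the eigenvalue-$0$ subgroup of $G_{A'|_{C_{\alpha,1}}}$. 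Letting $n_\alpha$ and $m_\alpha$ be the number of rows and columns of a representative component $C_{\alpha,1}$, the identities $\sum_\alpha n_\alpha h_\alpha = r$ and $\sum_\alpha m_\alpha h_\alpha = c$ give conditions (i) and (ii).

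It then remains to check (iii)--(v). For (iii) I would show that the coloured directed bipartite graph $\cB_{A'|_{C_{\alpha,1}}}$ of a connected full rank component, after colouring in its missing edges, is irreducible with automorphism group $G_\alpha$ on $n_\alpha$ and $m_\alpha$ points; by the stated equivalence between paired $2$-closure and automorphism groups of irreducible coloured bipartite graphs, this says exactly that $G_\alpha$ is paired $2$-closed on $n_\alpha$ and $m_\alpha$ points. Conditions (iv) and (v) are then forced by Lemma~\ref{lem:trivialautlem}: since the $\cC_\alpha$ are by construction pairwise non-isomorphic column-space types, a factor with $\min(n_\alpha,m_\alpha) = 1$ reduces to a $1 \times 1$ block and hence has $G_\alpha$ trivial, so part~(i) of that lemma shows at most one such $\alpha$ can occur; and three distinct factors with trivial $G_\alpha$ and $\min(n_\alpha,m_\alpha) \leq 2$ would, by part~(ii), contain two with isomorphic column spaces, contradicting distinctness.

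For the \emph{sufficiency} direction I would, given data $(G_\alpha,n_\alpha,m_\alpha,h_\alpha)$ satisfying (i)--(v), realise the group by a single block matrix. For each $\alpha$ with $G_\alpha$ nontrivial or $n_\alpha,m_\alpha > 2$, Lemma~\ref{lem:SchuzOneDirection} supplies a full rank, irreducible $A_\alpha \in M_{r_\alpha \times c_\alpha}(\F\T)$ with $r_\alpha \leq n_\alpha$, $c_\alpha \leq m_\alpha$, connected $\cB_{A_\alpha}$, and $\Gamma_{H_{A_\alpha}} \cong \R \times G_\alpha$; for the remaining degenerate factors (trivial $G_\alpha$ with $\min(n_\alpha,m_\alpha) \leq 2$) I would take instead the explicit matrices $(0)$ and $\left(\begin{smallmatrix} 0 & 0 \\ -\infty & 0 \end{smallmatrix}\right)$ from the proof of Lemma~\ref{lem:trivialautlem}, whose column spaces realise the two isomorphism types permitted in that range. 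I would then form the block-diagonal matrix $\tilde A$ built from $h_\alpha$ copies of each $A_\alpha$ (off-diagonal blocks being $-\infty$), so that $\cC_{\tilde A}$ consists of exactly these copies; by (i) and (ii) its size $r \times c$ satisfies $r \leq n$ and $c \leq m$, so Lemma~\ref{lem:reduced} lifts $\Gamma_{H_{\tilde A}}$ to a Sch\"utzenberger group of $M_{n \times m}(\T)$. Provided the chosen $A_\alpha$ have pairwise non-isomorphic column spaces, the isomorphism-type partition of $\cC_{\tilde A}$ groups the $h_\alpha$ copies of $A_\alpha$ together and keeps distinct $\alpha$ apart, so Corollary~\ref{cor:embeddableisomorphism} returns precisely $\prod_\alpha((\R \times G_\alpha) \wr \cS_{h_\alpha})$.

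The main obstacle is this last proviso: ensuring that the representatives realising distinct factors have pairwise \emph{non-isomorphic} column spaces, since any accidental isomorphism would merge two wreath factors $(\R \times G)\wr\cS_{h_\alpha}$ and $(\R\times G)\wr\cS_{h_\beta}$ into a single $(\R \times G)\wr\cS_{h_\alpha+h_\beta}$ and change the group. For factors with non-isomorphic $G_\alpha$ this is automatic, since by Theorem~\ref{thm:auto} the Sch\"utzenberger group of a component is the automorphism group of its column space and isomorphic semimodules have isomorphic automorphism groups; the delicate cases are equal $G_\alpha$ and the trivial-automorphism blocks, where I would exploit the freedom in choosing the free-abelian basis $\{z_\gamma\}$ (and, where needed, distinct point-sets $n_\alpha,m_\alpha$) in Lemma~\ref{lem:SchuzOneDirection}, with conditions (iv) and (v) guaranteeing that the finitely many degenerate small blocks can all be chosen distinct. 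Verifying in the necessity direction that $\cB_{A'|_{C_{\alpha,1}}}$ is genuinely irreducible with automorphism group exactly $G_\alpha$ rather than something larger is the second point requiring care, and I would handle it using Corollary~\ref{cor:ColSameRowSame} together with the structure of monomial units established in Section~\ref{sec:FullRank}.
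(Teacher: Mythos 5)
Your overall route coincides with the paper's: reduce to a full rank representative via Lemma~\ref{lem:reduced}, read off the direct product of wreath products from Theorem~\ref{thm:units} and Corollary~\ref{cor:embeddableisomorphism}, deduce (iv) and (v) from Lemma~\ref{lem:trivialautlem}, and realise a prescribed group by a block-diagonal matrix assembled from Lemma~\ref{lem:SchuzOneDirection} together with the degenerate blocks $(0)$ and $\left(\begin{smallmatrix}0&0\\-\infty&0\end{smallmatrix}\right)$. There is, however, a concrete gap in your plan for condition (iii). You propose to show that $\cB_{A'|_{C_{\alpha,1}}}$ is irreducible with automorphism group exactly $G_\alpha$; for a general full-rank representative this is false as stated. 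An element of $G_\alpha$ is a monomial matrix with eigenvalue $0$, which need not be a genuine permutation matrix (its finite entries need only sum to zero, e.g.\ scalings $+1$ and $-1$), and the induced pair $(\sigma,\tau)$ with $PA=AQ$ then satisfies $\lambda_i + A_{\sigma(i),\tau(j)} = A_{i,j}+\mu_j$ rather than $A_{\sigma(i),\tau(j)}=A_{i,j}$, so it does not preserve the edge colouring; consequently $\mathrm{Aut}(\cB_{A'|_{C_{\alpha,1}}})$ may be a proper subgroup of the permutation image of $G_\alpha$. The missing ingredient is the normalisation of Lemma~\ref{lem:eigenforall} and Corollary~\ref{cor:eigenvec0}: one must first replace $A'|_{C_{\alpha,1}}$ by $U A'|_{C_{\alpha,1}} V$ for suitable diagonal units so that $(0,\dots,0)^T$ is a common right eigenvector for $G_\alpha$ (and dually for ${}_\alpha G$), which forces every eigenvalue-$0$ element to be an honest permutation matrix; only then does the equivalence $(\sigma,\tau)\in\mathrm{Aut}(\cB_{A'|_{C_{\alpha,1}}})\Leftrightarrow PA'|_{C_{\alpha,1}}=A'|_{C_{\alpha,1}}Q$ hold and yield the paired $2$-closed representation on $n_\alpha$ and $m_\alpha$ points. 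Your appeal to Corollary~\ref{cor:ColSameRowSame} and ``the structure of monomial units'' does not supply this step.

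A secondary, lesser issue: in the converse direction you correctly identify the obstacle (pairwise non-isomorphic column spaces of the blocks) and the right tool, namely choosing the entries across \emph{all} blocks to be jointly a basis of a free abelian group, but you do not carry out the verification. The paper completes it by deriving, from a hypothetical $PA_\alpha = A_\beta Q$ supplied by \cite[Theorem 102]{GLecturesMaxPlus}, the relation $\alpha_{\sigma(1),\tau(1)}-\alpha_{\sigma(1),\tau(j)}-\alpha_{\sigma(k),\tau(1)}+\alpha_{\sigma(k),\tau(j)}=\beta_{1,1}-\beta_{1,j}-\beta_{k,1}+\beta_{k,j}$, forcing both sides to vanish by independence and contradicting the fact (Lemma~\ref{lem:SchuzOneDirection}(iv)) that no row or column of a block is constant. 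This is a routine completion rather than a missing idea; the point genuinely needing repair is the eigenvector normalisation above.
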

\begin{proof}
    As left and right Sch\"utzenberger groups are isomorphic, it suffices to show the result for left Sch\"utzenberger groups. Let $X \in M_{n \times m}(\T)$ with row rank $r$ and column rank $c$. By Lemma~\ref{lem:reduced}, there exists $A \in M_{r\times c}(\T)$ with full rank such that $\Gamma_{H_X} \cong \Gamma_{H_A}$, and, by Theorem~\ref{thm:units}, $\Gamma_{H_A} \cong G_A$.

\par Consider the coloured bipartite graph $\cB_A$, and let $\cup_{\alpha=1}^z \cC_\alpha$ be a partition of $\cC_A$ such that $X,Y \in \cC_\alpha$ if and only if $C(A|_X) \cong C(A|_Y)$. For each $1 \leq \alpha \leq z$, define $\cC_\alpha = \{C_{\alpha,1}, C_{\alpha,2},\dots,C_{\alpha,h_\alpha}\}$, 
\begin{align*}
G_\alpha &= \{ P \in G_{A|_{C_{\alpha,1}}} \colon P \text{ has eigenvalue 0 }\} \leq G_{A|_{C_{\alpha,1}}}, \text{ and } \\
{}_\alpha G &= \{ Q \in {}_{A|_{C_{\alpha,1}}}G \colon Q \text{ has eigenvalue 0}\} \leq {}_{A|_{C_{\alpha,1}}}G.
\end{align*}
Then, by Corollary~\ref{cor:embeddableisomorphism},
\[G_A \cong \prod_{{\alpha}=1}^z( (\R \times G_\alpha) \wr \cS_{h_{\alpha}}). \]
\par Without loss of generality, we may suppose $(0,\dots,0)^T$ is a common eigenvector for all $P \in G_{\alpha}$, and $(0,\dots,0)$ is a common eigenvector for all $Q \in {}_\alpha G$ as, by Corollary~\ref{cor:eigenvec0}, for each $C_{\alpha,1}$, there exist diagonal unit matrices $U_\alpha$ and $V_\alpha$ such that  $(0,\dots,0)^T$ is a common right eigenvector for all $G_{U_\alpha A|_{C_{\alpha,1}} V_\alpha}$ and $(0,\dots,0)$ is a common left eigenvector for ${}_{U_\alpha A|_{C_{\alpha,1}} V_\alpha} G$. So by replacing each $A|_{C_{\alpha,1}}$ in $A$ with $U_\alpha A|_{C_{\alpha,1}} V_\alpha$, we obtain a matrix with isomorphic column space with the desired property.

\par Recall $\Omega$ and $\Theta$ are the vertex sets of $\cB_A$. Now, let $\Omega_{C_{\alpha,1}} = \Omega \cap C_{\alpha,1}$ with $n_\alpha = |\Omega_{C_{\alpha,1}}|$ and $\Theta_{C_{\alpha,1}} = \Theta \cap C_{\alpha,1}$ with $m_\alpha = |\Theta_{C_{\alpha,1}}|$. Then, the matrices in $G_\alpha$ are monomial matrices which fix the eigenvector of length $n_{\alpha}$, $(0,\dots,0)^T$, and hence are tropical permutation matrices, inducing a permutation on $n_{\alpha}$ points. Similarly, matrices in ${}_\alpha G$ are tropical permutation matrices that induce a permutation on $m_\alpha$ points.

\par Note that $\cB_{A|_{C_{\alpha,1}}}$ is the subgraph of $\cB_A$ only containing the connected component $C_{\alpha,1}$. For $\nu \in \cS_{\Omega_{C_{\alpha,1}}} \times \cS_{\Theta_{C_{\alpha,1}}}$, we say $\nu = (\sigma,\tau)$ for $\sigma \in \cS_{n_{\alpha}}$ and $\tau \in \cS_{m_{\alpha}}$ if $\nu$ maps $\omega_i$ to $\omega_{\sigma(i)}$ and $\theta_i$ to $\theta_{\tau(i)}$.
Now, let $P$ and $Q$ be the permutation matrix of $\sigma$ and $\tau$ respectively, that is, units such that $P_{i,\sigma(i)} = 0$ and $Q_{i,\tau(i)} = 0$ for all $i$. Then,
\begin{align*}
(\sigma,\tau) \in \mathrm{Aut}(\cB_{A|_{C_{\alpha,1}}}) &\Leftrightarrow (A|_{C_{\alpha,1}})_{\sigma(s),\tau(t)} = (A|_{C_{\alpha,1}})_{s,t} \text{ for all } s \text{ and } t \\
&\Leftrightarrow PA|_{C_{\alpha,1}}Q^{-1} = A|_{C_{\alpha,1}}. \\
&\Leftrightarrow PA|_{C_{\alpha,1}} = A|_{C_{\alpha,1}}Q.
\end{align*}
Thus, $\mathrm{Aut}(\cB_{A|_{C_{\alpha,1}}})$ can be expressed as a diagonal action of an action of $G_\alpha$ on $\Omega_{C_{\alpha,1}}$ and an action of ${}_{\alpha}G \cong G_\alpha$ on $\Theta_{C_{\alpha,1}}$. Hence, $G_\alpha$ has a paired 2-closed representation on $n_{\alpha}$ and $m_{\alpha}$ points. Therefore,
\[\Gamma_A \cong \prod_{{\alpha}=1}^z( (\R \times G_\alpha) \wr \cS_{h_{\alpha}})\]
where each $G_\alpha$ has a paired 2-closed representation on $n_{\alpha}$ and $m_{\alpha}$ points. 
Note that, $\sum_{{\alpha}=1}^z n_{\alpha}h_{\alpha} = r \leq n$ and $\sum_{{\alpha}=1}^z m_{\alpha}h_{\alpha} = c \leq m$. By Lemma~\ref{lem:trivialautlem}, at most one $\alpha$ has $\min(n_\alpha,m_\alpha) = 1$ and if $G_{\alpha_1},G_{\alpha_2},G_{\alpha_3}$ are trivial for distinct $\alpha_i$, then $\min(n_{\alpha_i},m_{\alpha_i}) > 2$ for some $i$.
Thus, we have shown that all Sch\"utzenberger groups are of the given form.

\par Now, let $z \in \N$, and for each $1 \leq \alpha \leq z$, let $n_\alpha,m_\alpha,h_\alpha \in \N$ be such that $\sum_{{\alpha}=1}^z n_{\alpha}h_{\alpha} \leq n$ and $\sum_{{\alpha}=1}^z m_{\alpha}h_{\alpha} \leq m$. Then, let each $G_\alpha$ be a group with paired 2-closed permutation group on $n_{\alpha}$ and $m_{\alpha}$ points such that at most one $\alpha$ has $\min(n_\alpha,m_\alpha) = 1$ and if $G_{\alpha_1},G_{\alpha_2},G_{\alpha_3}$ are trivial for distinct $\alpha_i$, then $\min(n_{\alpha_i},m_{\alpha_i}) > 2$ for some $i$.

By Lemma~\ref{lem:SchuzOneDirection}, for each $\alpha$ with $G_\alpha$ non-trivial or $n_\alpha,m_\alpha > 2$, there exists $A_\alpha \in M_{n_\alpha \times m_\alpha}(\F\T)$ such that $A_\alpha$ has full rank and $G_{A_\alpha} \cong G_\alpha \times \R$. Note that, as $A_\alpha \in M_{n_\alpha \times m_\alpha}(\F\T)$, each $\cB_{A_\alpha}$ is connected, and by the proof of the lemma, we can choose the entries so the set of entries in all the $A_\alpha$ form a basis for a free abelian group.

If there exists $i$ such that $G_{i}$ is trivial with $\min(n_{i},m_{i}) = 2$, let $A_{i} = \left( \begin{smallmatrix}
    0 & 0 \\
    -\infty & 0
\end{smallmatrix}\right)\in M_2(\T)$, and if there exists $j \neq i$ (or no such $i$ existed) such that $G_{j}$ is trivial with $\min(n_{j},m_{j}) \leq 2$, let $A_{j} = (0) \in M_1(\T)$. 
Note that $A_i$ and $A_j$ (if they exist) have full rank.

Let $A \in M_{r \times c}(\T)$ be a block diagonal matrix containing $h_\alpha$ copies of $A_\alpha$ for each $\alpha$. Remark that we allow the blocks of the $A$ to be rectangular, and hence, for $A$ to be rectangular.

Note that, by Corollary~\ref{cor:embeddableisomorphism}, to show $\Gamma_{H_A}$ is isomorphic to $\prod_{{\alpha}=1}^z ((\R \times G_{\alpha}) \wr \cS_{h_{\alpha}})$ it suffices to show that $C(A_\alpha) \not \cong C(A_\beta)$ for any $\alpha \neq \beta$.

    Now, if $G_{\beta}$ is trivial and $\min(n_\beta,m_\beta) \leq 2$, then $C(A_{\alpha}) \not\cong C(A_{\beta})$ for any $\alpha \neq \beta$. So suppose otherwise for both $\alpha \neq \beta$, then $A_{\alpha}$ and $A_{\beta}$ have at least two rows and columns. 
    For a contradiction, suppose $C(A_\alpha) \cong C(A_\beta)$ then, by \cite[Theorem 102]{GLecturesMaxPlus}, there exist units $P,Q$ and permutations $\sigma$ and $\tau$ such that $P_{i,\sigma(i)}, Q_{i,\tau(i)} \neq -\infty$ for all $i$ and $PA_\alpha = A_\beta Q$.
    Then, letting $(A_\alpha)_{i,j} = \alpha_{i,j}$ and $(A_\beta)_{i,j} = \beta_{i,j}$ note that
    \[ P_{i,\sigma(i)} + \alpha_{\sigma(i),\tau(j)} = (PA_\alpha)_{i,\tau(j)} = (A_\beta Q)_{i,\tau(j)} = \beta_{i,j} + Q_{j,\tau(j)}.\]
    for all $i,j$. No column of $A_\beta$ only contains a single value, so there exists $k$ such that $\beta_{1,1} \neq \beta_{k,i}$. Thus, from the above equation, we obtain that,
    \[ \alpha_{\sigma(1),\tau(1)} - \alpha_{\sigma(1),\tau(j)} - \alpha_{\sigma(k),\tau(1)} + \alpha_{\sigma(k),\tau(j)} = \beta_{1,1} - \beta_{1,j} - \beta_{k,1} + \beta_{k,j}\]
    for all $j$. As the entries form a basis for a free abelian group and $A_\alpha$ and $A_\beta$ share no entries, the above equation must be 0 for all $j$. Thus, $\beta_{1,1} = \beta_{1,j}$ for all $j$ as $\beta_{1,1} \neq \beta_{k,1}$, giving a contradiction as no row only contains a single value. 
    Therefore, $C(A_\alpha) \not\cong C(A_\beta)$ for all $\alpha \neq \beta$ and hence $\Gamma_{H_A} \cong \prod_{{\alpha}=1}^z (G_{A_\alpha} \wr \cS_{h_{\alpha}}) \cong \prod_{{\alpha}=1}^z ((\R \times G_{\alpha}) \wr \cS_{h_{\alpha}})$ by Theorem~\ref{thm:embeddablesingle}, Corollary~\ref{cor:embeddableisomorphism}, and the construction of $A$.
\end{proof}

A straightforward adaptation of \cite[Theorem 4.6]{HKDuality} to non-square matrices gives that, for $A \in M_{r\times c}(\F\T)$, the $\cH$-class of $A$ in $M(\T)$ and the $\cH$-class of $A$ in the semigroupoid $M(\F\T)$ are equal.
Thus, we obtain a classification of the Sch\"utzenberger groups of $M(\F\T)$.

\begin{corollary}
The Sch\"utzenberger groups of $M_{n \times m}(\F\T)$, are, up to isomorphism, exactly groups of the form $G \times \R$ where $G \leq \cS_n$ is a finite group with a paired 2-closed permutation representation on $n$ and $m$ points.
\end{corollary}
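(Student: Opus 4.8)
The plan is to read off the corollary from Theorem~\ref{thm:SchuzClassification} by isolating the $\cH$-classes that are supported on a single connected component. The first step is to exploit the $\cH$-class equality noted just before the statement: for $A\in M_{r\times c}(\F\T)$ the $\cH$-class of $A$ in $M(\T)$ agrees with its $\cH$-class in $M(\F\T)$, so $\Gamma_{H_A}$ is the same whether computed in the category or in the semigroupoid. Hence it suffices to determine, up to isomorphism, the Sch\"utzenberger groups of those $\cH$-classes of $M(\T)$ that contain a finitary matrix, and to check that these are exactly the groups $G\times\R$ with $G$ paired $2$-closed on $n$ and $m$ points.

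For necessity I would begin with $X\in M_{n\times m}(\F\T)$. Since every entry of $X$ is finite, the bipartite graph $\cB_X$ is complete, hence connected. Applying Lemma~\ref{lem:reduced} produces a full-rank $A\in M_{r\times c}(\T)$ with $r\le n$, $c\le m$ and $\Gamma_{H_X}\cong\Gamma_{H_A}$; because the reduction only deletes and duplicates rows and columns, $A$ may be taken finitary, so $\cB_A$ is again connected. Specialising the proof of Theorem~\ref{thm:SchuzClassification} to the single-component case $z=h_1=1$ (equivalently, combining Theorem~\ref{thm:units} with Theorem~\ref{thm:embeddablesingle}) then yields $\Gamma_{H_X}\cong\R\times G$ with $G$ paired $2$-closed on $r$ and $c$ points.

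The step I expect to be the main obstacle is upgrading ``paired $2$-closed on $r$ and $c$'' to ``paired $2$-closed on $n$ and $m$'', i.e.\ the bookkeeping between the reduced ranks and the ambient dimensions. Writing $G=\mathrm{Aut}(B)$ for an irreducible complete coloured bipartite graph $B$ on $\Omega\cup\Theta$ with $|\Omega|=r$ and $|\Theta|=c$, I would adjoin $n-r$ new $\Omega$-vertices and $m-c$ new $\Theta$-vertices, each joined to every vertex of the opposite part, colouring the edges at a given new vertex by a fresh colour that is constant on the $G$-orbits of the opposite part and is distinct from every colour used elsewhere. Each new vertex then has a neighbourhood unlike any other and is fixed by every automorphism, while on the old vertices the fresh colours (being constant on orbits) impose no new constraint; so the enlarged graph is irreducible with automorphism group still $G$, exhibiting $G\le\cS_n$ as paired $2$-closed on $n$ and $m$ points.

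For sufficiency I would reverse this. Given $G$ paired $2$-closed on $n$ and $m$ points with $G$ non-trivial or $n,m>2$, Lemma~\ref{lem:SchuzOneDirection} supplies a full-rank $A_0\in M_{r\times c}(\F\T)$ with $r\le n$, $c\le m$ and $\Gamma_{H_{A_0}}\cong\R\times G$; padding $A_0$ back up to size $n\times m$ by duplicating rows and columns (Lemma~\ref{lem:reduced}) keeps it finitary and preserves the Sch\"utzenberger group, so $\R\times G$ occurs in $M_{n\times m}(\F\T)$. The boundary cases excluded by Lemma~\ref{lem:SchuzOneDirection}, namely $G$ trivial with $\min(n,m)\le 2$, I would settle by hand: the finitary matrix $(0)$ and its duplications realise $\R=\R\times\{1\}$, and the trivial group is readily seen to be paired $2$-closed on every $n$ and $m$ by colouring an irreducible graph so as to destroy all symmetry. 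Apart from this dimension reconciliation and the small exceptional cases, every step is a direct specialisation of the results already established.
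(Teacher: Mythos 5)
Your proposal is correct and follows essentially the same route as the paper: connectivity of $\cB_A$ for finitary matrices together with Corollary~\ref{cor:embeddableisomorphism} and Theorem~\ref{thm:SchuzClassification} for necessity, and Lemmas~\ref{lem:reduced} and~\ref{lem:SchuzOneDirection} (with the all-zero matrix covering the trivial-group small cases) for sufficiency. The one place you go beyond the paper is in explicitly upgrading the paired $2$-closed representation from $r$ and $c$ points (the ranks) to $n$ and $m$ points by adjoining fixed vertices with fresh orbit-constant colours; the paper leaves this bookkeeping implicit, and your argument for it is sound.
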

\begin{proof}
    For any matrix $A \in M_{n \times m}(\F\T)$, $\cB_A$ contains exactly one connected component, so by Corollary~\ref{cor:embeddableisomorphism}, $\Gamma_{H_A} \cong \R \times G$ for some finite group $G$. By Theorem~\ref{thm:SchuzClassification}, $G$ has a paired 2-closed permutation representation on $n$ and $m$ points.
    
    \par Now, note that $\Gamma_{H_A} \cong \R$ for any $A \in M_1(\F\T)$. So, by Lemmas~\ref{lem:reduced} and \ref{lem:SchuzOneDirection}, every group of the form $G \times \R$ where $G \leq \cS_n$ is a finite group with a paired 2-closed permutation representation on $n$ and $m$ points appears as a Sch\"utzenberger group of $M_{n\times m}(\F\T)$.
\end{proof}

\section{Classification of the maximal subgroups} \label{sec:GroupCase}
In this section, we classify the maximal subgroups of $M(\T)$. It is well known that the maximal subgroups are exactly the $\cH$-classes containing idempotents. Thus, as all idempotent matrices are square, classifying the maximal subgroups of $M(\T)$ is equivalent to classifying the maximal subgroups of $M_n(\T)$ for all $n \in \N$. 

The \emph{topological dimension of a subset} $X \subseteq \T^n$ is the largest $k$ for which there exists a dimension $k$ affine space $K \subseteq \T^n$, such that $X \cap K$ has a non-empty relative interior in $K$. For $A \in M_{n \times m}(\T)$, we say the \emph{tropical rank} of $A$ is the topological dimension of $C(A)$.

\begin{lemma}[{\cite[Proposition~3.8 and Lemma~5.6]{GIMMUltimate}}] \label{lem:IdemRank}
    Let $E \in M_n(\T)$ be an idempotent. Then, the row, column and tropical rank coincide.
\end{lemma}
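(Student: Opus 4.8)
The plan is to establish the two inequalities relating the tropical rank to the combinatorial ranks and to identify all three quantities with a single transpose-invariant number read off from the cycle structure of $E$. Throughout I would work with the weighted digraph $\cG_E$ and exploit that, since $E^2 = E$, we have $E^m = E$ for every $m \ge 1$; in particular $(E^m)_{ii} = E_{ii}$, so the maximum weight of a closed walk of length $m$ through $i$ equals $E_{ii}$ for all $m$. Hence no cycle of $\cG_E$ has positive weight; since $E$ is a nonzero idempotent its maximum cycle weight is $0$, and the cycles attaining it form the \emph{critical subgraph}, all of whose nodes have diagonal entry $0$. I would also record the two structural consequences of idempotency used below: $E$ fixes each of its columns (the $i$th column of $E^2 = E$ is $E \otimes E_i$, so $E \otimes E_i = E_i$), and $E^{T}$ is again idempotent, so the entire discussion is symmetric under transposition.

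The easy inequality is that the tropical rank is at most the column rank: if $C(E)$ is generated by $r$ columns, then it is the image of the continuous piecewise-linear map $\T^{r} \to \T^{n}$ sending a coefficient vector to the corresponding tropical combination of those columns, so its topological dimension is at most $r$. Dually, the topological dimension of $R(E)$ is at most the row rank.

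The core of the argument is a reduction to a critical submatrix via max-plus spectral theory, which is where the real work lies (cf.\ \cite[Chapter~4]{BMaxLinear} and \cite[Proposition~3.8]{GIMMUltimate}). Because $\cG_E$ has no positive cycles and $E$ is idempotent, $E$ acts as the spectral projection onto its $0$-eigenspace $C(E)$: columns indexed by nodes on a common critical cycle are tropical scalar multiples of one another, every non-critical column is a tropical combination of critical columns, and selecting one representative index per critical strongly connected component yields a minimal generating set of $C(E)$ (so that the generating set can indeed be taken from the columns, as in \cite[Corollary~20]{BSSGenExtemeBases}). Thus the column rank equals the number $r$ of critical components, and the essential index set $I$ ($|I| = r$) determines a core submatrix $F = E|_{I \times I}$ which is itself idempotent, has zero diagonal, and has every off-diagonal cycle of strictly negative weight (a weight-$0$ off-diagonal cycle would be critical and would merge two representatives). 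Since reversing every edge preserves cycle weights and strongly connected components, the critical subgraph of $\cG_{E^{T}}$ has the same number of components, so the row rank also equals $r$.

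It remains to prove that the tropical rank is at least $r$, which is the main obstacle and the only place the full strength of the core structure is needed. Here I would exhibit an $r$-dimensional region inside $C(E)$ by drawing coefficients for the essential columns $\{E_j : j \in I\}$ from the open set
\[
U \;=\; \{\,\lambda \in \T^{r} : \lambda_j > \lambda_k + F_{jk}\ \text{for all}\ j \neq k\,\}.
\]
The set $U$ is nonempty precisely because every off-diagonal cycle of $F$ has strictly negative weight (summing the strict inequalities around such a cycle would otherwise be contradictory), and it is open, being cut out by strict inequalities. For $\lambda \in U$ and $v = \bigoplus_{j \in I} \lambda_j \otimes E_j$, the coordinate of $v$ at each index $j \in I$ is $\max_{k \in I}(\lambda_k + F_{jk}) = \lambda_j$, since $F_{jj} = 0$ forces the diagonal term to dominate. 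Hence $\lambda \mapsto v$ is injective on $U$ with explicit continuous inverse $v \mapsto (v_j)_{j \in I}$, so it embeds an open subset of $\T^{r}$ into $C(E)$ and forces the topological dimension to be at least $r$. Combined with the easy inequality, this gives tropical rank $= r =$ column rank $=$ row rank. The delicate point throughout is the spectral input of the previous paragraph; once the critical-component description of a minimal generating set is in hand, both the matching of the three ranks and the dimension count are routine.
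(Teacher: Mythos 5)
The paper does not actually prove this lemma --- it is imported verbatim from \cite[Proposition~3.8 and Lemma~5.6]{GIMMUltimate} --- so any argument you give is by definition a different route, and yours is a reasonable self-contained one: reduce everything to the critical graph of $\cG_E$, show that all three ranks equal the number of critical strongly connected components, and obtain the lower bound on the topological dimension by exhibiting an explicit chart inside $C(E)$. The structural facts you invoke (an idempotent has no positive cycles, critical diagonal entries are $0$, columns in one critical class are proportional, every column lies in the span of the critical columns, one column per class is a minimal generating set, and the whole picture is transpose-invariant) are all correct and can be derived directly from $E^2=E$ by decomposing long optimal paths, and your injectivity computation $v_j=\lambda_j$ for $\lambda\in U$ is right. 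Three points would need tightening before this could stand in for the citation. First, your justification that $U$ is nonempty argues the wrong direction: you explain why a nonnegative off-diagonal cycle would make the system contradictory, whereas what is needed is that strict negativity of every off-diagonal cycle of $F$ \emph{implies} feasibility of the strict difference constraints (standard via shortest-path potentials, but it is the converse of what you wrote). Second, the paper's notion of topological dimension is defined through affine subspaces, not homeomorphism type, so ``embeds an open set, hence dimension at least $r$'' is not literally sufficient; you should restrict to an open cell on which $\lambda\mapsto\bigoplus_{j}\lambda_j\otimes E_j$ is affine of rank $r$ (your identity $v_j=\lambda_j$ essentially supplies this), and dually the upper bound via ``image of a piecewise-linear map from $\T^{r}$'' needs the polyhedral structure rather than mere continuity. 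Third, citing \cite[Proposition~3.8]{GIMMUltimate} for the spectral-projection step is circular here, since that is precisely one of the results being quoted; for a genuinely independent proof that step should be carried out (long optimal paths in $\cG_E$ must pass through critical vertices, which yields the expansion of every column over the critical ones) or sourced from \cite[Chapter~4]{BMaxLinear} alone.
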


Given the above lemma, we say an idempotent $E \in M_n(\T)$ has \emph{rank} $k$ if $E$ has row rank $k$, and say $E$ has \emph{full rank} if $E$ has rank $n$.

To classify the maximal subgroups of $M_n(\T)$, we require a number of results from the literature that have only been proven for idempotents in $M_n(\F\T)$. The following lemma will be useful to pass these results from $M_n(\F\T)$ to $M_n(\T)$.

\begin{lemma} \label{lem:infinitefiniteidempotent}
    Let $E \in M_n(\T)$ be a full rank idempotent, $N < -\sum_{i,j}|E_{i,j}|$, and for $m \in \N$, let $F_m \in M_n(\F\T)$ be the idempotent power of the matrix obtained by replacing each $-\infty$ in $E$ with $m\cdot N$. Then, for each $m \in \N$,
    \begin{enumerate}[(i)]
        \item $F_m$ has full rank,
        \item $C(F_m) \subseteq C(F_{m+1})$, and
        \item $C(E) \cap \F\T^n = \bigcup_{m\in \N} C(F_m) \setminus (-\infty)^n$
        \end{enumerate}
    where $(-\infty)^n$ is the length $n$ vector only containing $-\infty$.
\end{lemma}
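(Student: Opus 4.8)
The plan is to identify $F_m$ with a Kleene-star matrix and to exploit the structure of the full-rank idempotent $E$. Throughout write $E^{(m)}$ for the finitary matrix obtained from $E$ by replacing each $-\infty$ entry with $mN$, so that $F_m$ is the idempotent power of $E^{(m)}$ and $E \le E^{(m+1)} \le E^{(m)}$ entrywise. Everything rests on two structural facts about the full-rank idempotent $E$, and establishing them is the main obstacle: (a) every diagonal entry $E_{ii}$ equals $0$; and (b) every cycle of length $\ge 2$ in $\cG_E$ has strictly negative weight. For (a), if $E_{ii}<0$ then, since column $i$ is $E$-fixed and the $l=i$ term $E_{ki}+E_{ii}$ is strictly dominated, one gets $E_{\cdot i} = \bigoplus_{l \ne i} E_{li}\otimes E_{\cdot l}$, so column $i$ is redundant, contradicting full column rank. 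For (b), a weight-$0$ cycle forces the columns along it to be tropical scalar multiples of one another by the same idempotency inequalities, again contradicting full rank. (These are standard for full-rank idempotents, compatible with the rank theory behind Lemma~\ref{lem:IdemRank}, and I would either cite them or record the short arguments above.)

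Combining (a) and (b) with the hypothesis $N < -\sum_{i,j}|E_{i,j}|$, I would next show that every non-loop cycle of $\cG_{E^{(m)}}$ is strictly negative: a cycle using an edge of weight $mN$ has weight at most $mN + \sum_{i,j}|E_{i,j}| \le N + \sum_{i,j}|E_{i,j}| < 0$, while a non-loop cycle avoiding such edges lies in $\cG_E$ and is negative by (b). Since $E^{(m)}_{ii}=0$ we have $E^{(m)} \ge I_n$, so the powers of $E^{(m)}$ increase; as maximal-weight walks are then attained by paths (inserting a cycle never helps and loops are free), they stabilise by length $n-1$. Hence $F_m = (E^{(m)})^{n-1} = (E^{(m)})^{\ast}$ is the Kleene star, $(F_m)_{ij}$ is the maximal weight of a path $i\to j$ in $\cG_{E^{(m)}}$, and $(F_m)_{ii}=0$. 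This identification is the technical heart of the proof and all three claims follow from it fairly directly.

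For (i): $F_m$ is a finitary idempotent, so by Lemma~\ref{lem:IdemRank} it suffices to check that the tropical permanent $\max_{\sigma}\sum_i (F_m)_{i,\sigma(i)}$ is attained only by the identity. The identity gives $0$ by the zero diagonal, while any $\sigma \ne \mathrm{id}$ has a cycle of length $\ge 2$, so $\sum_i (F_m)_{i,\sigma(i)}$ is the weight of a closed walk meeting at least two vertices; decomposing it into cycles yields a non-loop cycle (weight $<0$) together with loops (weight $0$), hence a strictly negative total. Thus $F_m$ has full rank. For (ii): monotonicity of the Kleene star gives $F_{m+1} = (E^{(m+1)})^\ast \le (E^{(m)})^\ast = F_m$; if $x \in C(F_m)\cap\F\T^n$ then $F_m\otimes x = x$, so $F_{m+1}\otimes x \le F_m\otimes x = x$, while $(F_{m+1}\otimes x)_i \ge (F_{m+1})_{ii}+x_i = x_i$ forces $F_{m+1}\otimes x = x$, i.e. $x \in C(F_{m+1})$. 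Since $(-\infty)^n$ lies in both, this gives $C(F_m) \subseteq C(F_{m+1})$.

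For (iii) I would use the fixed-point description $C(A)=\{x : A\otimes x = x\}$, valid for any idempotent $A$. If $x \in C(F_m)\setminus(-\infty)^n$, then $x$ is finite (as $F_m$ is finitary) and $x=F_m\otimes z$ for some $z$; from $E \le E^{(m)} \le F_m$ and idempotency one checks $E F_m = F_m$, where $EF_m \ge F_m$ uses $E_{ii}=0$ and $EF_m \le E^{(m)}F_m = (E^{(m)})^{n} = F_m$. Hence $E\otimes x = EF_m\otimes z = F_m\otimes z = x$, so $x \in C(E)\cap\F\T^n$. Conversely, if $x \in C(E)\cap\F\T^n$ then $E\otimes x=x$, and for $m$ large enough that $mN + x_j < x_i$ whenever $E_{ij}=-\infty$ the perturbed entries are strictly dominated, giving $E^{(m)}\otimes x = x$ and thus $F_m\otimes x = (E^{(m)})^{n-1}\otimes x = x$, so $x \in C(F_m)\setminus(-\infty)^n$. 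Together these give $C(E)\cap\F\T^n = \bigcup_{m\in\N} C(F_m)\setminus(-\infty)^n$. I expect the genuine difficulty to be entirely in the structural input (a) and (b) on full-rank idempotents and the ensuing Kleene-star identification of $F_m$; once these are secured, the three statements reduce to the monotonicity and fixed-point manipulations above.
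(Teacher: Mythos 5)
Your argument is correct, but it takes a genuinely different route from the paper's. The paper does not go through the Kleene star at all: it obtains existence, idempotency and full rank of $F_m$ (your part (i)) directly by citation to \cite[Proposition 1.6.12]{BMaxLinear} and \cite[Lemma~5.6]{GIMMUltimate}, and then computes the entries of $F_m$ explicitly, showing $(F_m)_{i,j}=E_{i,j}$ when $E_{i,j}\neq-\infty$ and $(F_m)_{i,j}=\max_t(E_{i,t})+mN+\max_s(E_{s,j})$ otherwise; parts (ii) and (iii) are then proved by exhibiting each column of $F_m$ as an explicit tropical combination $G_j^X = X_j \oplus \bigoplus_{i\in P_j}(F_m)_{i,j}\otimes X_i$ of the columns of $X\in\{E,F_{m+1}\}$. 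Your replacement of this by monotonicity $F_{m+1}\le F_m$, the fixed-point description $C(A)=\{x: A\otimes x=x\}$ of an idempotent's column space, and the zero-diagonal squeeze is cleaner and essentially self-contained, and your converse direction of (iii) matches the paper's. Two caveats: first, your facts (a) and (b) and the tropical-permanent argument for (i) implicitly invoke the Develin--Santos--Sturmfels identification of ``permanent attained by a unique permutation'' with the topological-dimension notion of tropical rank used in Lemma~\ref{lem:IdemRank}, so that step needs a citation (the zero-diagonal fact itself is exactly \cite[Lemma~5.6]{GIMMUltimate}, which the paper cites inside this proof). Second, the paper's explicit formula for $(F_m)_{i,j}$ is not wasted effort: it is reused in the proof of Lemma~\ref{lem:HClassMapping} to show $F_m x = Ex$ for all sufficiently large $m$, so if you adopt your version you would still want to record that formula (or re-derive it from your path description) for later use.
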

\begin{proof}
    The matrix $E$ has rank $n$ so, by \cite[Proposition 1.6.12]{BMaxLinear} and \cite[Lemma~5.6]{GIMMUltimate}, $F_m$ exists and is an idempotent of rank $n$.
    Recall that $\cG_E$ is the weighted directed graph formed from $E$ and, as $E$ is an idempotent, $E_{i,j}$ is the maximum weight of a path from $i$ to $j$ in $\cG_E$.
    Since $N < -\sum_{i,j}|E_{i,j}|$, if $E_{i,j} \neq -\infty$, then any path in $\cG_{F_m}$ containing an edge weighted by $mN$ has a lower weight than $E_{i,j}$. Similarly, if $E_{i,j} = -\infty$, then any path from $i$ to $j$ in $\cG_{F_m}$ containing at least two edges weighted by $mN$ has a lower weight than the maximum weighted path from $i$ to $j$ only using one such edge. Therefore, 
    \[ (F_m)_{i,j} = \begin{cases}
        E_{i,j} &\text{if } E_{i,j} \neq -\infty, \\
        \max_t(E_{i,t}) + mN + \max_s(E_{s,j}) &\text{otherwise.}
    \end{cases}\]
    
    \par For $m \in \N$, let $E_j$ and $(F_m)_j$ denote the $j$-th column of $E$ and $F_m$ respectively. We show that $C(F_m) \subseteq C(F_{m+1})$ and $C(F_m) \subseteq C(E)$. For all $1 \leq j \leq n$, define $P_j = \{i \colon E_{i,j} = -\infty\}$ and, for $X \in \{E,F_{m+1}\}$, let
    \[ G_j^X = X_j \oplus \left( \bigoplus_{i \in P_j} (F_m)_{i,j} \otimes X_i \right).\]
    Remark that $G_j^X \in C(X)$. We aim to show $(F_m)_j = G_j^X$ for $X \in \{E,F_{m+1}\}$ and all $1 \leq j \leq n$.
    Let $X \in \{E,F_{m+1}\}$. By \cite[Lemma~5.6]{GIMMUltimate}, $X_{i,i} = 0$ for all $i$, as $X$ is a full rank idempotent. Thus, $(F_m)_j \leq G_j^X$, so suppose $G_j^X \not\leq (F_m)_j$. 
    
    \par Then, there exists $1 \leq p \leq n$ and $q \in P_j$ such that 
    \[ (F_m)_{p,j} < (G^X_j)_p = X_{p,j} \oplus ((F_m)_{q,j} \otimes X_{p,q}) = (F_m)_{q,j} \otimes X_{p,q} \]
    where the second equality holds as $X_{p,j} \leq (F_m)_{p,j} < (G^X_j)_p$.
    Then,
    \[ (F_m)_{p,j} < X_{p,q} \otimes (F_m)_{q,j} \leq (F_m)_{p,q} \otimes (F_m)_{q,j} \leq (F_m)_{p,j}\]
    giving a contradiction, where the final inequality holds as $F_m$ is an idempotent.    
    Thus, $G_j^X = (F_m)_j$ for all $j$, and therefore, $C(F_m) \subseteq C(F_{m+1})$ and $C(F_m) \subseteq C(E)$.
    
    \par Finally, suppose $x \in C(E) \cap \F\T^n$. Then,
    \[ x = \lambda_1 E_1 \oplus \cdots \oplus \lambda_n E_n \]
    for some $\lambda_j \in \T$. Recall, we showed that either $(F_m)_{i,j} = E_{i,j}$ or $(F_m)_{i,j} < (m-1)N$. Thus, as $x \in \F\T^n$, if we take $m$ large enough, then
    \[ x = \lambda_1 (F_m)_1 \oplus \cdots \oplus \lambda_n (F_m)_n. \]
    Therefore, $C(E) \cap \F\T^n = \bigcup_{m\in \N} C(F_m) \setminus (-\infty)^n$.
\end{proof}

Let $X \subseteq \T^n$. We say $X$ has \emph{pure dimension} $k$ if every open, within $X$ with the induced topology, subset has topological dimension $k$. Note that, tropically convex sets do not necessarily have pure dimension.

\begin{lemma} \label{lem:puredim}
    Let $E \in M_n(\T)$ be a full rank idempotent. Then, $C(E) \cap \F\T^n$ has pure dimension $n$. 
\end{lemma}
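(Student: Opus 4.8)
The plan is to recognise $C(E) \cap \F\T^n$ as an \emph{ordinary} convex polyhedron and then invoke the elementary fact that convex sets automatically have pure dimension, reducing everything to a single full-dimensionality check.

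First I would use idempotency to identify the column space with a fixed-point set. Since $E^2 = E$, every $x \in C(E)$ satisfies $E \otimes x = x$, and conversely $E \otimes x = x$ gives $x = E \otimes x \in C(E)$; hence $C(E) = \{x \in \T^n : E \otimes x = x\}$. Now restrict to finite vectors $x \in \F\T^n = \R^n$. Using that $E_{i,i} = 0$ for all $i$ (a full rank idempotent, by \cite[Lemma~5.6]{GIMMUltimate}), the $i$-th coordinate of $E \otimes x$ is $\max_j(E_{i,j} + x_j) \geq E_{i,i} + x_i = x_i$, so the equation $E \otimes x = x$ is equivalent to the system $x_i - x_j \geq E_{i,j}$ for all $i,j$ (the constraints with $E_{i,j} = -\infty$ being vacuous). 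Therefore
\[ C(E) \cap \F\T^n = \{\, x \in \R^n : x_i - x_j \geq E_{i,j} \text{ for all } i,j \,\} =: Q, \]
an intersection of finitely many closed half-spaces, i.e.\ an ordinary convex polyhedron.

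Next I would record the purely topological fact that any nonempty convex $Q \subseteq \R^n$ has pure dimension equal to $\dim Q$: its relative interior is dense in $Q$ and is a manifold of dimension $\dim Q$, so any nonempty relatively open $U \subseteq Q$ meets $\operatorname{relint}(Q)$ in a set containing a $(\dim Q)$-ball, while trivially $\dim U \leq \dim Q$. Thus it remains only to prove $\dim Q = n$, equivalently that $Q$ has nonempty interior, equivalently that the \emph{strict} system $x_i - x_j > E_{i,j}$ (over $i \neq j$ with $E_{i,j}$ finite) is feasible.

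This last feasibility is where full rank must be used, and it is the main obstacle. Reading the inequalities as difference constraints, strict feasibility fails exactly when the weighted digraph $\cG_E$ has a simple cycle of nonnegative total weight through at least two vertices. Idempotency gives $E_{i_1,i_1} \geq \sum_\ell E_{i_\ell,i_{\ell+1}}$ for any such cycle, and $E_{i_1,i_1}=0$, so every cycle has weight $\leq 0$; the only obstruction is a critical (weight-$0$) cycle. Along a critical cycle one checks, again via idempotency, that some pair $i \neq j$ satisfies $E_{i,j} + E_{j,i} = 0$, and then $E_{k,j} = E_{k,i} + E_{i,j}$ for all $k$, so columns $i$ and $j$ of $E$ are tropical scalings of one another, contradicting full column rank. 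Hence no critical cycle exists, the strict system is feasible, $\dim Q = n$, and $Q = C(E) \cap \F\T^n$ has pure dimension $n$. The delicate points are solely the standard reduction of strict difference-constraint feasibility to the absence of nonnegative cycles and the rank argument excluding critical cycles; the rest is routine.
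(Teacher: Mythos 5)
Your proof is correct, but it takes a genuinely different route from the paper. The paper proves this lemma by approximating $E$ with the finitary full-rank idempotents $F_m$ of Lemma~\ref{lem:infinitefiniteidempotent}, importing pure $n$-dimensionality of each $C(F_m)\setminus(-\infty)^n$ from the external dimension theory of tropical polytopes (\cite[Theorems~1.1 and 4.5]{IJKDimPolytopes}), and passing to the increasing union. You instead work directly: the identification $C(E)=\{x: E\otimes x=x\}$ and the observation $E_{i,i}=0$ turn $C(E)\cap\F\T^n$ into the classical polyhedron $\{x\in\R^n : x_i-x_j\ge E_{i,j}\}$ (the polytrope of $E$), for which pure dimensionality is automatic from ordinary convexity, leaving only the full-dimensionality check. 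Each step of that check is sound: strict feasibility of the difference constraints reduces to the absence of a nonnegative cycle on at least two vertices; idempotency forces all cycle weights $\le 0$ and, around a weight-$0$ cycle, forces $E_{i,j}+E_{j,i}=0$ for consecutive vertices, whence $E_{\ell,j}=E_{\ell,i}+E_{i,j}$ for all $\ell$ and column $j$ is a tropical scaling of column $i$, contradicting full column rank. What your approach buys is self-containedness and elementarity: it needs neither the approximation lemma nor the cited results on dimensions of finitary tropical polytopes, and it makes transparent exactly where full rank enters (no critical cycle through two vertices). What the paper's approach buys is economy within its own architecture: Lemma~\ref{lem:infinitefiniteidempotent} is needed anyway for Lemma~\ref{lem:HClassMapping}, so the pure-dimension statement comes essentially for free once that machinery is in place.
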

\begin{proof}
By Lemma~\ref{lem:infinitefiniteidempotent}, for each $m \in \N$, there exists a full rank idempotent $F_m \in M_n(\F\T)$ such that $C(F_m) \subseteq C(F_{m+1})$ and $C(E) \cap \F\T^n = \bigcup_{m\in \N} C(F_m) \setminus (-\infty)^n$. By \cite[Theorems~1.1 and 4.5]{IJKDimPolytopes}, $C(F_m) \setminus (-\infty)^n$ has pure dimension $n$ for each $m \in \N$. Thus, $C(E) \cap \F\T^n$ has pure dimension $n$. 
\end{proof}

\begin{lemma}\label{lem:HClassMapping}
    Let $E \in M_n(\T)$ be a full rank idempotent and $A \in H_E$. Consider $C(E) \cap \F\T^n$ as a subset of $\R^n$ with the usual topology. Let $\varphi_A \colon \F\T^n \to \F\T^n$ be left multiplication by $A$. Then,
    \begin{enumerate}[(i)]
        \item $\varphi_A$ maps interior points of $C(E) \cap \F\T^n$ to interior points,
        \item $\varphi_A$ maps boundary points of $C(E) \cap \F\T^n$ to boundary points, and
        \item $\varphi_A$ maps exterior points of $C(E) \cap \F\T^n$ to boundary points.
    \end{enumerate}
\end{lemma}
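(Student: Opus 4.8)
The plan is to exploit that $H_E$ is a group with identity $E$, so that left multiplication by $A$ restricts to an invertible max-plus-linear self-map of $S := C(E) \cap \F\T^n$, and then to combine Brouwer's invariance of domain with the order-theoretic description of the projection $\varphi_E$. Throughout, $S$ is viewed inside $\R^n = \F\T^n$; note $S$ is closed in $\R^n$ (finitely generated tropical convex sets are closed) and, by Lemma~\ref{lem:puredim}, genuinely $n$-dimensional, so $\partial S = S \setminus \mathrm{int}(S)$ and the exterior of $S$ is $\R^n \setminus S$.

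First I would lay the groundwork. Since $E$ is idempotent, $\varphi_E$ restricts to the identity on $C(E)$; and since $H_E$ is a group with identity $E$ containing $A$ and its inverse $A^{-1}$, we have $AE = EA = A$ and $A^{-1}A = AA^{-1} = E$. For any $x \in \F\T^n$ the vector $Ax$ is a tropical combination of the columns of $A$, so $Ax \in C(A) = C(E)$ by Proposition~\ref{prop:Greens}; as $A$ has full row rank it has no $-\infty$ row, whence $Ax \in \F\T^n$. Thus $\varphi_A(\F\T^n) \subseteq S$, and in particular $\varphi_A(S) \subseteq S$. Using associativity of tropical multiplication, $\varphi_{A^{-1}}\circ\varphi_A = \varphi_E$ and $\varphi_A\circ\varphi_{A^{-1}} = \varphi_E$, which restrict to the identity on $C(E)$; hence $\varphi_A$ restricts to a bijection of $S$ with inverse $\varphi_{A^{-1}}$, and in particular is injective on $S$. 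Finally $\varphi_A$ is continuous on $\R^n$.

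For \textit{(i)} I would invoke invariance of domain: $\mathrm{int}(S)$ is open in $\R^n$ and $\varphi_A$ is continuous and injective on it, so $\varphi_A(\mathrm{int}(S))$ is open in $\R^n$; being contained in $S$, it lies in $\mathrm{int}(S)$. For \textit{(ii)}, observe that $A^{-1} \in H_E$ also satisfies the hypotheses, so \textit{(i)} applies equally to $\varphi_{A^{-1}}$; if some $x \in \partial S$ had $\varphi_A(x) \in \mathrm{int}(S)$, then $x = \varphi_{A^{-1}}(\varphi_A(x)) \in \mathrm{int}(S)$ (using $\varphi_E x = x$ for $x \in S \subseteq C(E)$), a contradiction, so $\varphi_A(x) \in \partial S$.

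The substance is \textit{(iii)}, and the key is to analyse the projection $\varphi_E$. Since $E_{i,i} = 0$ for all $i$ by \cite[Lemma~5.6]{GIMMUltimate}, we get $Ex \geq x$ componentwise; moreover $E$ is monotone, so if $z \in C(E)$ with $z \geq x$ then $z = Ez \geq Ex$, i.e. $Ex$ is the least element of $C(E)$ dominating $x$. For an exterior point $x$ we have $x \notin C(E)$, hence $Ex \neq x$ and there is a coordinate $k$ with $(Ex)_k > x_k$. I would then show $Ex \in \partial S$: if $Ex$ were interior, decreasing its $k$-th coordinate by a sufficiently small amount would produce a point $p \in C(E)$ with $p \geq x$ but $p_k < (Ex)_k$, so $p \not\geq Ex$, contradicting the minimality of $Ex$. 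Finally $AE = A$ gives $\varphi_A(x) = \varphi_A(Ex)$, and since $Ex \in \partial S \subseteq S$, part \textit{(ii)} yields $\varphi_A(x) \in \partial S$. The main obstacle is precisely this projection-to-the-boundary step; the order-theoretic characterisation of $\varphi_E$ as the least dominating element and the single-coordinate perturbation are what make it go through, while everything else is bookkeeping around the group structure of $H_E$ and invariance of domain.
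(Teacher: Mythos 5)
Your proof is correct, but it takes a genuinely different route from the paper's. The paper proves this lemma by approximation: it uses Lemma~\ref{lem:infinitefiniteidempotent} to realise $C(E)\cap\F\T^n$ as the increasing union of the column spaces of finitary full-rank idempotents $F_m$, checks that $F_mx=Ex$ (hence $PF_mx=Ax$ with $A=PE$) for all sufficiently large $m$, and then transfers the corresponding statement for finitary idempotents, quoted from \cite[Lemma~5.2]{IJKTropicalGroups}, through an ``eventually interior/boundary/exterior'' classification. You instead argue directly: parts \textit{(i)} and \textit{(ii)} follow from the group structure of $H_E$ (so that $\varphi_{A^{-1}}\circ\varphi_A=\varphi_E$ restricts to the identity on $C(E)$, making $\varphi_A$ a continuous injection on the closed set $S=C(E)\cap\F\T^n$) combined with Brouwer's invariance of domain, and part \textit{(iii)} follows from the residuation identity $Ex=\min\{z\in C(E):z\geq x\}$ (using $E_{i,i}=0$ and monotonicity of $\varphi_E$) plus a single-coordinate perturbation showing $Ex\in\partial S$, after which $AE=A$ reduces \textit{(iii)} to \textit{(ii)}. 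Both arguments are sound; yours has the advantage of being self-contained, avoiding both the $F_m$ machinery and the external finitary lemma (which it in effect reproves in the general setting), at the cost of importing invariance of domain as a topological black box. If you write it up, it is worth noting that closedness of $S$ is immediate from $S=\{x\in\R^n: Ex=x\}$, and that injectivity of $\varphi_A$ on $S$ alone (not on all of $\F\T^n$) is exactly what invariance of domain requires, since $\mathrm{int}(S)\subseteq S$.
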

\begin{proof}
For each $m \in \N$, define $F_m$ as in Lemma~\ref{lem:infinitefiniteidempotent}. Then, each $F_m \in M_n(\F\T)$ is a full rank idempotent such that $C(F_m) \subseteq C(F_{m+1})$ and $C(E) \cap \F\T^n = \bigcup_{m\in \N} C(F_m) \setminus (-\infty)^n$. 

\par As $C(F_m) \subseteq C(F_{m+1})$ for all $m \in \N$, every $x \in \F\T^n$, is either an interior, exterior, or boundary point of $C(F_m)\setminus (-\infty)^n$ for all sufficiently large $m$, we call such a point an \emph{eventually} interior, exterior, or boundary point, respectively.

\par Note that, if $x$ is an eventually interior (resp. boundary) point, then $x$ is an interior (resp. boundary) point of $C(E) \cap \F\T^n$, but if $x$ is an eventually exterior point, then $x$ is either an exterior or boundary point of $C(E) \cap \F\T^n$.

Now, for all $x \in \F\T^n$, $N < 0$ and $1 \leq i,j \leq n$, there exists $m \in \N$ such that 
\[ (Ex)_i \geq x_i = (F_m)_{i,i} + x_i > \max_t(E_{i,t}) + mN + \max_s(E_{s,j}) + x_j. \]
Thus, by the definition of $F_m$ and the proof of Lemma~\ref{lem:infinitefiniteidempotent}, $F_mx = Ex$ for all sufficiently large $m$. Moreover, by Theorem~\ref{thm:units}, $A = PE$ for some unit $P \in M_n(\T)$, so $PF_mx = Ax$ for all sufficiently large $m$. 

\par By \cite[Lemma~5.2]{IJKTropicalGroups}, $\varphi_{PF_m}$ maps interior points of $C(F_m)$ to interior points and maps exterior and boundary points of $C(F_m)$ to boundary points. Thus, $\varphi_A$ maps eventually interior points to eventually interior points, and hence to an interior point of $C(E) \cap \F\T^n$. Similarly, $\varphi_A$ maps eventually exterior and boundary points to eventually boundary points, and hence to boundary points of $C(E) \cap \F\T^n$.
\end{proof}

\begin{proposition} \label{prop:Idemreduced}
    Let $E \in M_n(\T)$ be an idempotent of rank $m$. Then, for every $k \geq m$ there exists an idempotent $F \in M_k(\T)$ of rank $m$ such that $H_E \cong H_F$.
\end{proposition}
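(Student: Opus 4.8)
The plan is to reduce the statement to a classical fact about regular $\cD$-classes, combined with the column-space description of Green's relations. Since $E$ is an idempotent, $H_E$ is a maximal subgroup, so by Proposition~\ref{SchutzProps}\textit{(v)} it is isomorphic (as an abstract group) to its Sch\"utzenberger group $\Gamma_{H_E}$, and the same holds for any idempotent I manage to construct. Consequently it suffices to exhibit, for each $k \geq m$, an idempotent $F \in M_k(\T)$ of rank $m$ lying in the same $\cD$-class as $E$: Proposition~\ref{SchutzProps}\textit{(iv)} then gives $\Gamma_{H_F} \cong \Gamma_{H_E}$, and \textit{(v)} upgrades this to $H_F \cong \Gamma_{H_F} \cong \Gamma_{H_E} \cong H_E$. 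Thus no direct computation of the two $\cH$-classes is needed; everything is routed through the Sch\"utzenberger group.

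First I would record that, by Lemma~\ref{lem:IdemRank}, $E$ has row rank and column rank both equal to $m$. Fixing $k \geq m$ and applying Lemma~\ref{lem:reduced} with target dimensions $k \geq m$ and $l = k \geq m$, I obtain $Z \in M_{k \times k}(\T)$ with $C(Z) \cong C(E)$; by Proposition~\ref{prop:Greens}\textit{(iv)} this yields $Z \cD E$. Because $E$ is idempotent, the $\cD$-class $D$ containing $Z$ and $E$ is regular, so every $\cR$-class inside $D$ contains an idempotent. Let $F$ be an idempotent in the $\cR$-class of $Z$. Then $F \cR Z$, so by Proposition~\ref{prop:Greens}\textit{(i)} we have $C(F) = C(Z)$; since a column space determines the number of rows of a matrix, $F$ has $k$ rows, and as an idempotent it is square, giving $F \in M_k(\T)$. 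Finally $C(F) = C(Z) \cong C(E)$ has a minimal generating set of size $m$, so $F$ has column rank $m$, and a second use of Lemma~\ref{lem:IdemRank} shows $F$ has rank $m$. Since $F \cR Z \cD E$ forces $F \cD E$, the isomorphism chain above finishes the proof.

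The step requiring the most care is the appeal to regular-$\cD$-class theory in the categorical setting, since the facts that a $\cD$-class meeting an idempotent is regular, and that each $\cR$-class of a regular $\cD$-class contains an idempotent, are classical statements about semigroups. I would handle this exactly as in the proof of Proposition~\ref{SchutzProps}: pass to the semigroup $(T,\cdot)$ with underlying set $M(\T) \cup \{0\}$ in which undefined products equal $0$. Green's relations on $M(\T)$ coincide with those on $T$ away from the adjoined zero $\cJ$-class, so $E$ and $Z$ remain $\cD$-related elements of $T$ with $E$ idempotent; the classical theory then produces an idempotent $F$ in the $\cR$-class of $Z$ inside $T$, and since $R_Z$ does not contain $0$ we have $F \neq 0$, that is $F \in M(\T)$, with $F \cR Z$ holding back in $M(\T)$. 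Beyond this bookkeeping and the transfer of Green's relations between $M(\T)$ and $T$, the only tropical inputs are Lemmas~\ref{lem:reduced} and \ref{lem:IdemRank}, and the heavier machinery on pure dimension and interior points is not needed here.
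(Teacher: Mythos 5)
Your proposal is correct and follows essentially the same route as the paper: apply Lemma~\ref{lem:reduced} to produce a square matrix $\cD$-related to $E$, use regularity of the $\cD$-class of an idempotent to find an idempotent $F$ in the relevant $\cR$-class, and conclude via isomorphism of Sch\"utzenberger groups. The only cosmetic difference is that the paper closes by citing Theorem~\ref{thm:auto} (automorphism groups of isomorphic column spaces) where you cite Proposition~\ref{SchutzProps}\textit{(iv)} and \textit{(v)}; your extra care in transferring the regular-$\cD$-class facts to the categorical setting via the adjoined-zero semigroup is a welcome, if implicit in the paper, clarification.
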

\begin{proof}
    By Lemma~\ref{lem:reduced}, there exists $A \in M_k(\T)$ such that $C(A) \cong C(E)$. Thus, by Proposition~\ref{prop:Greens}\textit{(iv)}, $A \cD E$ and hence, $A$ is regular. Therefore, $A \cR F$ for some idempotent $F \in M_k(\T)$. Note that $C(F) = C(A)$ by Proposition~\ref{prop:Greens}. 
    Thus, $C(E) \cong C(F)$, and $H_E \cong H_F$ by Theorem~\ref{thm:auto}.
\end{proof}

Given a full rank idempotent $E \in M_n(\F\T)$, the authors in \cite{IJKTropicalGroups} define $G_E$ to be the set of unit matrices that commute with $E$. Extending this definition to full rank idempotents in $M_n(\T)$, the following theorem shows that, in this case, this definition coincides with our definition of $G_E$ given immediately before Theorem~\ref{thm:units}. This theorem can be seen to be a generalisation of \cite[Theorem~5.3]{IJKTropicalGroups} to full rank idempotents in $M_n(\T)$.

\begin{theorem} \label{thm:GEisCommute}
    Let $E \in M_n(\T)$ be a full rank idempotent. Then, $G_E$ is exactly the set of unit matrices which commute with $E$.
\end{theorem}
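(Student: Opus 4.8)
The plan is to prove both inclusions, with the containment of the commuting units in $G_E$ being immediate and the reverse containment forming the substance of the argument. For the easy direction, suppose $P$ is a unit with $PE = EP$. Right multiplication by a unit only permutes and scales the columns, so $C(EP) = C(E)$; hence $C(PE) = C(EP) = C(E)$, and since $R(PE) = R(E)$ for every unit $P$, Proposition~\ref{prop:Greens} gives $PE \in H_E$, that is, $P \in G_E$.

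For the main direction I would exploit the fact that a full rank idempotent acts as a closure operator onto its column space. By \cite[Lemma~5.6]{GIMMUltimate} we have $E_{i,i} = 0$ for all $i$, so $(E \otimes x)_i = \max_j(E_{i,j} + x_j) \geq x_i$ and therefore $E \otimes x \geq x$ entrywise for every $x \in \T^n$; combined with $E^2 = E$ this shows that $E$ is a monotone retraction of $\T^n$ onto $C(E)$ whose image is exactly its set of fixed points. The only order-theoretic fact I will need is the following least-element property: if $z \in C(E)$ and $z \geq y$ entrywise, then $z = E\otimes z \geq E \otimes y$ by monotonicity of $E$, so $E \otimes y$ is the least element of $C(E)$ lying above $y$.

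Now let $P \in G_E$. Since $E$ has full rank, Theorems~\ref{thm:auto} and~\ref{thm:units} identify the automorphism of $C(E)$ induced by $P$ with the map $x \mapsto P \otimes x$; in particular $P$ maps $C(E)$ onto itself, so $P \otimes (E \otimes x) \in C(E)$ for every $x$. As unit matrices are monotone and $E \otimes x \geq x$, applying $P$ gives $P \otimes (E \otimes x) \geq P \otimes x$. Thus $P \otimes (E \otimes x)$ is an element of $C(E)$ lying above $P \otimes x$, and the least-element property forces $P \otimes (E \otimes x) \geq E \otimes (P \otimes x)$. Taking $x$ to be the vector with $0$ in coordinate $j$ and $-\infty$ elsewhere, so that this inequality compares the $j$-th columns, and letting $j$ vary, we obtain the entrywise matrix inequality $PE \geq EP$.

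Finally, $G_E$ is a group, so $P^{-1} \in G_E$ and the same argument yields $P^{-1}E \geq EP^{-1}$. Multiplying on the left and on the right by $P$ and using that tropical matrix multiplication is monotone, this rearranges to $EP \geq PE$; combined with the previous inequality it gives $PE = EP$, as required. I expect the main point to check carefully is that $E$ genuinely behaves as a closure operator with $C(E)$ as its fixed-point set, and that the induced action of $P$ maps $C(E)$ into itself; once these are in place, monotonicity of the tropical operations does the rest, and the passage to $P^{-1}$ is exactly what promotes the one-sided inequality to the desired equality.
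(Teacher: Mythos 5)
Your proof is correct, and it takes a genuinely different --- and substantially more elementary --- route than the paper. The paper first invokes Lemma~\ref{dual} to produce the unique unit $Q$ with $PE = EQ$, and then runs a topological argument: it approximates $E$ by finitary idempotents $F_m$ (Lemma~\ref{lem:infinitefiniteidempotent}), shows $C(E) \cap \F\T^n$ has pure dimension $n$ (Lemma~\ref{lem:puredim}), and uses the interior/boundary/exterior behaviour of left multiplication (Lemma~\ref{lem:HClassMapping}, resting on \cite[Lemma~5.2]{IJKTropicalGroups}) to conclude that $EQx = Qx$ on $C(E)$, whence $EQE = QE$ and $P = Q$ by full rank. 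You instead observe that a full rank idempotent is a tropical closure operator --- monotone, inflationary because $E_{i,i} = 0$ by \cite[Lemma~5.6]{GIMMUltimate}, idempotent, with image $C(E)$ equal to its fixed-point set --- and extract the one-sided inequality $PE \geq EP$ from the least-element property of $E \otimes y$ in $C(E)$ together with $P \cdot C(E) = C(E)$; the group structure of $G_E$ (running the same argument for $P^{-1}$ and conjugating by $P$) then upgrades this to equality. Every step checks out: $E \otimes x \geq x$ from the zero diagonal, $P \cdot C(E) = C(E)$ from the definition of $G_E$, monotonicity of tropical multiplication, and evaluation at the tropical standard basis vectors to pass from the vector inequality to the matrix inequality. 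Your argument bypasses the three topological lemmas entirely (which are used nowhere else in the paper) at the cost of no additional machinery; the paper's route is heavier but yields, along the way, geometric information about how elements of $H_E$ act on interior and boundary points of $C(E) \cap \F\T^n$, which may be of independent interest.
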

\begin{proof}
Let $G_E'$ be the set of unit matrices which commute with $E$. Suppose $P \in G_E'$, then $PE = EP$ and hence, $P \in G_E$. 

So suppose $P \in G_E$, then, by Lemma~\ref{dual}, there exists a unique $Q \in {}_EG$ such that $PE = EQ$. Let $x$ be an interior point of $C(E) \cap \F\T^n$. Then, by Lemma~\ref{lem:HClassMapping}\textit{(i)}, $EQ$ maps $x$ to an interior point of $C(E) \cap \F\T^n$. Moreover, by Lemma~\ref{lem:HClassMapping}\textit{(ii-iii)}, $E$ maps the boundary and exterior points of $C(E) \cap \F\T^n$ to boundary points. Thus, $Q$ maps $x$ to interior points of $C(E) \cap \F\T^n$. In particular, as $E$ fixes every point in $C(E)$, we get that $EQx = Qx$ for all interior points in $C(E) \cap \F\T^n$.

\par By Lemma~\ref{lem:puredim}, $C(E) \cap \F\T^n$ has pure dimension $n$ so every boundary point of $C(E)$ is a limit of interior points. Moreover, as $E$ is a full rank idempotent, every $x \in C(E) \setminus \F\T^n$ is a limit of points in $C(E) \cap \F\T^n$.

Then, as $EQ$ and $Q$ are continuous maps, it follows that $EQx = Qx$ for all $x \in C(E)$. Thus, by Theorem~\ref{thm:auto}, $Q$ is $\T$-semimodule automorphism of $C(E)$. Therefore, $EQE = QE$, and hence
\[ PE = PEE =  EQE = QE,\]
and, as $E$ has full rank, $P = Q$.
\end{proof}

\begin{lemma} \label{lem:idemrestrict}
    Let $E \in M_n(\T)$ be a full rank idempotent. Then, $E|_X$ is a full rank idempotent for all $X \in \cC_E$.
\end{lemma}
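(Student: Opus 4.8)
The plan is to exploit the fact that a full rank idempotent has all diagonal entries equal to $0$: this pins down the shape of each connected component, makes $E|_X$ square, and reduces idempotency to a one-line tropical computation.

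First I would record the structural consequence of the diagonal. By \cite[Lemma~5.6]{GIMMUltimate} every full rank idempotent satisfies $E_{i,i}=0$ for all $i$ (this is precisely the fact already invoked in the proof of Lemma~\ref{lem:infinitefiniteidempotent}). In particular $E_{i,i}\neq -\infty$, so $\cB_E$ contains an edge from $\omega_i$ to $\theta_i$ for every $i$, forcing $\omega_i$ and $\theta_i$ into the same connected component. Hence each $X\in\cC_E$ is determined by a set $I_X\subseteq\{1,\dots,n\}$ with $\Omega\cap X=\{\omega_i\colon i\in I_X\}$ and $\Theta\cap X=\{\theta_i\colon i\in I_X\}$, so that $E|_X$ is exactly the principal submatrix of $E$ on the index set $I_X$. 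In particular $E|_X$ is square, so it is meaningful to ask whether it is idempotent, and full rank of $E|_X$ comes for free: since $E$ has both full row and full column rank, Remark~\ref{rem:reducedrank} gives that $E|_X$ has full row rank $|I_X|$ and full column rank $|I_X|$, i.e.\ full rank.

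It then remains to verify idempotency, which is the short computational step. For $i,j\in I_X$ I would sandwich $(E|_X)^2_{i,j}=\bigoplus_{k\in I_X} E_{i,k}\otimes E_{k,j}$ between two copies of $E_{i,j}$. Restricting the index set only decreases a tropical sum, so $(E|_X)^2_{i,j}\leq\bigoplus_{k=1}^n E_{i,k}\otimes E_{k,j}=(E^2)_{i,j}=E_{i,j}$ since $E^2=E$; and the term $k=i\in I_X$ contributes $E_{i,i}\otimes E_{i,j}=0\otimes E_{i,j}=E_{i,j}$, giving $(E|_X)^2_{i,j}\geq E_{i,j}$. Thus $(E|_X)^2_{i,j}=E_{i,j}=(E|_X)_{i,j}$, so $E|_X$ is idempotent. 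The only genuinely delicate point is the first step, namely that $E|_X$ is square; once the diagonal-entry fact places $\omega_i$ and $\theta_i$ in a common component this is immediate, and I expect no further obstacle beyond making the pairing argument precise.
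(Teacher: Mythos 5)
Your proof is correct and follows essentially the same route as the paper: both hinge on the fact that $E_{i,i}=0$ forces $\omega_i$ and $\theta_i$ into the same component (so $E|_X$ is a square principal submatrix), and both get full rank from Remark~\ref{rem:reducedrank}. The only difference is cosmetic: for idempotency the paper shows the omitted terms $E_{i,t}\otimes E_{t,j}$ with $\omega_t\notin X$ are all $-\infty$, so the restricted sum equals $(E^2)_{i,j}$ exactly, whereas you sandwich the restricted sum between $E_{i,j}$ (via the diagonal term $k=i$) and $(E^2)_{i,j}=E_{i,j}$; both computations are valid.
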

\begin{proof}
    Let $\cC_E = \{C_1,\dots,C_k\}$. By \cite[Lemma~5.6]{GIMMUltimate}, $E_{i,i} = 0$ for all $1 \leq i \leq n$, as $E$ is full rank idempotent. Hence, $\omega_i \in C_k$ if and only if $\theta_i \in C_k$. Thus, letting $F = E|_{C_m}$ and $\omega_i,\omega_j \in C_m$, we get that
    \[ F^2_{i,j} = \bigoplus_{\omega_t \in C_m} F_{i,t}F_{t,j} = \bigoplus_{1 \leq t \leq n} E_{i,t}E_{t,j} = (E^2)_{i,j} = E_{i,j} = F_{i,j}\]
    as $E_{i,t} = F_{i,t}$ for any $\omega_t \in C_m$ and $E_{i,t},E_{t,j} = -\infty$ for any $\omega_t \notin C_m$. Thus, $E|_X$ is an idempotent for all $X \in \cC_E$ and, by Remark~\ref{rem:reducedrank}, $E|_X$ has full rank for all $X \in \cC_E$.
\end{proof}

\par We say that a permutation group $G \leq \cS_n$ is called 2-closed if $G$ contains every element of $\cS_n$ which preserves the orbits of ordered pairs. Equivalently, $G$ is the automorphism group of a coloured complete directed graph of $n$ points without loops \cite{CGJKKMNTransPerm}.

\par We are now able to completely classify the groups which, up to isomorphism, arise as a maximal subgroup of $M_n(\T)$.

\begin{theorem} \label{thm:MaximalSubgroupsClassification}
The maximal subgroups of $M_n(\T)$ are, up to isomorphism, exactly groups of the form $\prod_{\alpha=1}^z((\R \times G_\alpha) \wr \cS_{h_\alpha})$ where, for each $1 \leq \alpha \leq z$, there exists $n_\alpha \in \N$ such that
\begin{enumerate}[(i)]
    \item $\sum_{\alpha=1}^z n_{\alpha}h_{\alpha} \leq n$,
    \item each $G_{\alpha}$ is 2-closed on $n_{\alpha}$ points,
    \item at most one $\alpha$ has $n_\alpha = 1$, and
    \item if $G_{\alpha_1},G_{\alpha_2},G_{\alpha_3}$ are trivial for distinct $\alpha_i$, then $n_{\alpha_i} > 2$ for some $i$.
\end{enumerate}
\end{theorem}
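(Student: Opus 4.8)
The plan is to exploit the fact that the maximal subgroups of $M_n(\T)$ are exactly the $\cH$-classes of idempotents, and that such an $\cH$-class is isomorphic to its Sch\"utzenberger group by Proposition~\ref{SchutzProps}\textit{(v)}. Thus every maximal subgroup is in particular a Sch\"utzenberger group and so, by Theorem~\ref{thm:SchuzClassification}, already has the form $\prod_{\alpha=1}^z((\R\times G_\alpha)\wr\cS_{h_\alpha})$ with each $G_\alpha$ paired $2$-closed on $n_\alpha$ and $m_\alpha$ points. The whole content of the theorem is therefore (a) to upgrade ``paired $2$-closed on $n_\alpha$ and $m_\alpha$'' to ``$2$-closed on $n_\alpha$'' in the idempotent case, and (b) to show conversely that every admissible tuple is realised by an \emph{idempotent}. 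For both directions I would first reduce, via Proposition~\ref{prop:Idemreduced}, to a full rank idempotent $E$, so that $H_E\cong\Gamma_{H_E}\cong G_E$ by Theorem~\ref{thm:units} and $G_E\cong\prod_{\alpha=1}^z((\R\times G_\alpha)\wr\cS_{h_\alpha})$ by Corollary~\ref{cor:embeddableisomorphism}.

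For the forward direction I would use that a full rank idempotent has $E_{i,i}=0$ (Lemma~\ref{lem:idemrestrict} and its proof), so that $\omega_i$ and $\theta_i$ always lie in the same connected component of $\cB_E$; this gives a canonical bijection between $\Omega\cap C_{\alpha,1}$ and $\Theta\cap C_{\alpha,1}$, forcing $n_\alpha=m_\alpha$. By Lemma~\ref{lem:idemrestrict} each $E|_{C_{\alpha,1}}$ is again a full rank idempotent, and by Theorem~\ref{thm:GEisCommute} the group $G_{E|_{C_{\alpha,1}}}$ consists precisely of units commuting with $E|_{C_{\alpha,1}}$. Hence the row- and column-permutations attached to such a unit coincide, and after normalising the common eigenvector to $\underline{0}$ (Corollary~\ref{cor:eigenvec0}) the finite factor $G_\alpha$ is exactly the group of permutations $\sigma$ with $(E|_{C_{\alpha,1}})_{\sigma(i),\sigma(j)}=(E|_{C_{\alpha,1}})_{i,j}$, i.e.\ the automorphism group of the complete directed graph on $n_\alpha$ vertices coloured by the entries of $E|_{C_{\alpha,1}}$ (the diagonal loops all carry colour $0$ and are discarded). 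Since any automorphism group of such an edge-colouring equals its own $2$-closure, $G_\alpha$ is $2$-closed on $n_\alpha$ points; conditions \textit{(i)}, \textit{(iii)} and \textit{(iv)} then descend directly from Theorem~\ref{thm:SchuzClassification} and Lemma~\ref{lem:trivialautlem} once $n_\alpha=m_\alpha$.

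For the converse I would build the witnessing idempotent by hand. Given $G_\alpha$ $2$-closed on $n_\alpha$ points, write $G_\alpha=\mathrm{Aut}(D_\alpha)$ for a coloured complete directed graph $D_\alpha$ and define $E_\alpha\in M_{n_\alpha}(\T)$ by setting the diagonal to $0$ and the $(i,j)$-entry to a value $e_{\gamma}$ encoding the colour $\gamma$ of the edge $(i,j)$, where the $e_\gamma$ are chosen to be distinct, algebraically independent (a basis of a free abelian group) and sufficiently negative that the subinvariance inequalities $e_{\gamma(i,k)}+e_{\gamma(k,j)}\le e_{\gamma(i,j)}$ hold; together with $E_{i,i}=0$ this makes $E_\alpha$ idempotent. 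The remaining trivial small cases are handled by the idempotents $(0)$ and $\left(\begin{smallmatrix}0&0\\-\infty&0\end{smallmatrix}\right)$, whose admissibility is controlled exactly by conditions \textit{(iii)} and \textit{(iv)}. I would then take the block-diagonal idempotent $E$ with $h_\alpha$ copies of $E_\alpha$, choosing the colour-value bases of distinct blocks to be disjoint so that $C(E_\alpha)\not\cong C(E_\beta)$ for $\alpha\ne\beta$ (as at the end of the proof of Theorem~\ref{thm:SchuzClassification}), and enlarge $E$ to $M_n(\T)$ using Proposition~\ref{prop:Idemreduced}. Theorems~\ref{thm:GEisCommute} and~\ref{thm:embeddablesingle} together with Corollary~\ref{cor:embeddableisomorphism} then give $H_E\cong G_E\cong\prod_{\alpha=1}^z((\R\times G_\alpha)\wr\cS_{h_\alpha})$.

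I expect the main obstacle to be the converse construction, where the constants must be pinned down so that $E_\alpha$ is \emph{simultaneously} idempotent, of full rank, and recovers exactly $G_\alpha$ with no accidental commuting units. The tension is that idempotency pushes the off-diagonal entries far below the $0$ diagonal, while full rank and the correct automorphism group require the entries to remain sufficiently generic to distinguish colours and prevent spurious column dependencies. Reconciling these demands will require redoing, inside the idempotent-constrained regime, the careful choice of separating constants used in Lemma~\ref{lem:SchuzOneDirection}; this is the only genuinely technical step, the rest being bookkeeping that transfers the paired $2$-closed classification to the $2$-closed one.
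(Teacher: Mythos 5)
Your proposal follows essentially the same route as the paper: reduce to a full-rank idempotent via Proposition~\ref{prop:Idemreduced}, use Theorem~\ref{thm:GEisCommute} to force the row- and column-permutations to coincide and so collapse the paired $2$-closed representation of Theorem~\ref{thm:SchuzClassification} to a $2$-closed one, and realise the converse by a block-diagonal idempotent with zero diagonal and off-diagonal entries drawn from a narrow band (the paper uses $[-1.1,-0.9]$) forming a free abelian basis and encoding a coloured complete digraph, with the same $(0)$ and $\bigl(\begin{smallmatrix}0&0\\-\infty&0\end{smallmatrix}\bigr)$ special cases. The one step you defer --- verifying that each constructed block admits no units beyond scalings of the permutation matrices coming from $G_\alpha$ --- is precisely where the paper spends the bulk of its proof (a lengthy free-abelian-basis computation built on the commutation relation $PA=AP$ from Theorem~\ref{thm:GEisCommute}), and your plan correctly identifies both this obstacle and the method for resolving it.
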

\begin{proof}
Let $H_F$ be a maximal subgroup of $M_n(\T)$ with idempotent $F$ of rank $m$. By Proposition~\ref{prop:Idemreduced}, there exists $E \in M_m(\T)$ an idempotent of rank $m$ such that $H_E \cong H_F$. 
Thus, by Theorem~\ref{thm:units}, $H_E \cong G_E$.

\par Consider the graph $\cB_E$, and let $\cup_{\alpha =1}^z \cC_\alpha$ be a partition of $\cC_E$ such that $X,Y \in \cC_\alpha$ if and only if $C(E|_X) \cong C(E|_Y)$. For each $\alpha$, define $\cC_\alpha = \{C_{\alpha,1}, C_{\alpha,2},\dots,C_{\alpha,h_\alpha}\}$ and 
$G_\alpha = \{P \in G_{E|_{C_{\alpha,1}}} \colon P \text{ has eigenvalue } 0 \}$. Then, by Corollary~\ref{cor:embeddableisomorphism}, 
\[G_E \cong \prod_{\alpha=1}^z((\R \times G_\alpha) \wr \cS_{h_\alpha}).\] 

\par By Lemma~\ref{lem:idemrestrict}, $E|_{C_{\alpha,1}}$ is a full rank idempotent for all $\alpha$. So, in particular, $E|_{C_{\alpha,1}} \in M_{n_\alpha}(\T)$ for some $n_\alpha$. So, by Corollary~\ref{cor:embeddableisomorphism} and Theorem~\ref{thm:SchuzClassification} applied to $G_\alpha$, we obtain that $G_\alpha$ has paired 2-closed representation on $n_\alpha$ and $n_\alpha$ points.

By Theorem~\ref{thm:GEisCommute}, $G_E = {}_EG$, and hence, by the definition of 2-closed and paired 2-closed, $G_{\alpha}$ has a 2-closed permutation representation on $n_{\alpha}$ points.
Then, $\sum_{\alpha=1}^z n_{\alpha}h_{\alpha} \leq n$, as $E|_{C} \in M_{n_\alpha}(\T)$ for all $C \in \cC_\alpha$, 
Moreover, by Lemma~\ref{lem:trivialautlem}, at most one $\alpha$ has $n_\alpha = 1$, and if $G_{\alpha_1},G_{\alpha_2},G_{\alpha_3}$ are trivial for distinct $\alpha_i$, then $n_{\alpha_i} > 2$ for some $i$.
Thus, we have shown that all maximal subgroups are of the given form.

\par Now, let $z \in \N$, and for each $1 \leq \alpha \leq z$, let $n_\alpha,h_\alpha \in \N$ be such that $\sum_{{\alpha}=1}^z n_{\alpha}h_{\alpha} \leq n$. Then, let each $G_{\alpha}$ be a 2-closed permutation group on $n_{\alpha}$ points such that at most one $\alpha$ has $n_\alpha = 1$, and if $G_{\alpha_1},G_{\alpha_2},G_{\alpha_3}$ are trivial for distinct $\alpha_i$, then $n_{\alpha_i} > 2$ for some $i$.
We may assume each $G_{\alpha}$ is the automorphism group of a coloured complete directed graph without loops, $D_{\alpha}$, with vertices $\Omega_{\alpha} = \{\omega_{\alpha,1},\dots,\omega_{\alpha,n_\alpha}\}$.
    
    \par For each $\alpha$, let $\Delta_{\alpha,1},\dots,\Delta_{\alpha,k_\alpha}$ be the orbits of $\Omega_\alpha$ under the action of $G_\alpha$ on $D_\alpha$. Then, further colour each edge in $D_\alpha$ to indicate the orbits of the nodes the edge started and ended at, that is, for $x \in \Delta_{\alpha,i}$ and $y \in \Delta_{\alpha,j}$, if the edge from $x$ to $y$ is coloured $\gamma$ instead colour the edge by $\gamma_{\alpha,i,j}$. Note that this does not change the automorphism group of $D_\alpha$ as automorphisms preserve the orbits of the nodes.

\par For each colour used in any $D_\alpha$, pick a distinct real number in the interval $[-1.1, -0.9]$ such that the numbers form a basis for a free abelian group under addition. Then, for each $\alpha$, define $A_\alpha \in M_{n_\alpha}(\T)$ such that the diagonal entries are 0 and, for $s \neq t$, $(A_{\alpha})_{s,t}$ is the number corresponding to the colour of the edge between $\omega_{{\alpha},s}$ and $\omega_{{\alpha},t}$.
However, if there exists $i$ such that $G_{i}$ is trivial with $n_{i} = 2$, let $A_{i} = \left(\begin{smallmatrix}
    0 & 0 \\
    -\infty & 0 
\end{smallmatrix}\right) \in M_2(\T)$, and if there exists $j \neq i$ (or no such $i$ existed) such that $G_{j}$ is trivial with $n_{j} \leq 2$, let $A_{j} = (0) \in M_1(\T)$. Note that $A_i$ and $A_j$ (if they exist) are full rank idempotents. In fact, every other $A_\alpha$ is an idempotent as for any $1 \leq s,t \leq n_\alpha$,
\[ (A_\alpha^2)_{s,t} = \max_k((A_\alpha)_{s,k} + (A_\alpha)_{k,t})\]
where $(A_\alpha)_{s,s} + (A_\alpha)_{s,t} = (A_\alpha)_{s,t} + (A_\alpha)_{t,t} = (A_\alpha)_{s,t} + 0 =(A_\alpha)_{s,t}$ and $(A_\alpha)_{s,k} + (A_\alpha)_{k,s} \leq -0.9 -0.9 < -1.1 \leq (A_\alpha)_{s,t}$ for $k \neq s,t$. Moreover, each $A_\alpha$ can be seen to be full rank as all the diagonal entries are 0 and all the off-diagonal entries are from $[-1.1,-0.9]$.

    Let $E \in M_m(\T)$ be a block diagonal matrix containing $h_\alpha$ copies of $A_\alpha$, for each $\alpha$. It is easy to see that  $m \leq n$ and $E$ is an idempotent of full rank.

    \par We claim that $H_E$ is isomorphic to $\prod_{{\alpha}=1}^z (G_{A_\alpha} \wr \cS_{h_{\alpha}})$. 
    Note that, if $G_{\alpha}$ is trivial and $n_\alpha \leq 2$, then $C(A_{\alpha}) \not\cong C(A_{\beta})$ for any $\beta \neq \alpha$. So suppose otherwise for both $\alpha$ and $\beta$ with $\alpha \neq \beta$, then $A_{\alpha}$ and $A_{\beta}$ have at least two rows and columns. 
    For a contradiction, suppose $C(A_\alpha) \cong C(A_\beta)$ then, by \cite[Theorem 102]{GLecturesMaxPlus}, there exist units $P,Q$ and permutations $\sigma$ and $\tau$ such that $P_{i,\sigma(i)}, Q_{i,\tau(i)} \neq -\infty$ for all $i$ and $PA_\alpha = A_\beta Q$.
    Then, letting $(A_\alpha)_{i,j} = \alpha_{i,j}$ and $(A_\beta)_{i,j} = \beta_{i,j}$ note that
    \[ P_{i,\sigma(i)} + \alpha_{\sigma(i),\tau(j)} = (PA_\alpha)_{i,\tau(j)} = (A_\beta Q)_{i,\tau(j)} = \beta_{i,j} + Q_{j,\tau(j)}.\]
    for all $i,j$. Thus, from the above equation, we obtain that,
    \begin{align*}
        \alpha_{\sigma(1),\tau(2)} + \alpha_{\sigma(2),\tau(1)} - \alpha_{\sigma(1),\tau(1)} - \alpha_{\sigma(2),\tau(2)} &= \beta_{1,2} + \beta_{2,1} -\beta_{1,1} - \beta_{2,2} \\
        &= \beta_{1,2} + \beta_{2,1}.
    \end{align*} 
    However, this gives a contradiction, as the non-zero entries of $(A_\alpha)$ and $(A_\beta)$ form a basis for a free abelian group, they share no non-zero entries, and $\beta_{1,2},\beta_{2,1} \neq 0$. Therefore, $C(A_\alpha) \not\cong C(A_\beta)$ for all $\alpha \neq \beta$ and hence, by Theorem~\ref{thm:embeddablesingle}, Corollary~\ref{cor:embeddableisomorphism}, and the construction of $E$, $H_E \cong \prod_{{\alpha}=1}^z (G_{A_\alpha} \wr \cS_{h_{\alpha}})$.
    
    \par We now show that $G_{A_\alpha} \cong G_\alpha \times \R$, for each $\alpha$. To increase readability, for the remainder of the proof we suppress the dependence on $\alpha$ in all our notation. In particular, we will abuse notation by writing $n$ for $n_\alpha$.

    \par Now suppose $n \leq 2$. If $G$ is the trivial group, then by the definition of $A$ and \cite[Theorem 4.4]{JK2by2}, $G_A \cong \R$. Similarly, if $G \cong \cS_2$, then, $A = \left(\begin{smallmatrix}
        0 & a \\
        a & 0
    \end{smallmatrix}\right)$ for some $a \in [-1.1,-0.9]$, and $G_A \cong G \times \R$ by \cite[Theorem 4.4]{JK2by2}. So we may suppose $n > 2$.

    We now show that $G_A$ consists of exactly the scalings of the permutation matrices corresponding to the permutations of $\Omega$ under the action of $G$ on $D$.
    
    \par For each $\sigma \in \cS_n$, define $\ol{\sigma} \in \cS_{\Omega}$ to be the permutation that maps $\omega_i$ to $\omega_{\sigma(i)}$. Now, let $P$ be the permutation matrix such that $P_{i,\sigma(i)} = 0$ for all $i$.
    By the definition of $A$ and $D$, if $\ol{\sigma} \in G$, then $PAP^{-1} = A$, or equivalently, $PA = AP$. Thus, if $\ol{\sigma} \in G$, then $(\lambda \otimes P)A \in H_A$, and hence $\lambda \otimes P \in G_A$ for all $\lambda \in \F\T$.
    
    Suppose $P \in G_A$ with $P_{i,\sigma(i)} = \lambda_i$ for $1 \leq i \leq n$ and some $\sigma \in \cS_n$. Then, as $P$ commutes with $A$ by Theorem~\ref{thm:GEisCommute}, we get that for all $i,j$,
    \begin{equation} \label{eq:GroupEquation}
        \lambda_i +  A_{\sigma(i),\sigma(j)} = (PA)_{i,\sigma(j)} = (AP)_{i,\sigma(j)} = A_{i,j} + \lambda_j .
    \end{equation}
    
    \par If $\ol{\sigma} \in G$, then, in $D$, the edges $(\omega_i,\omega_j)$ and $(\omega_{\sigma(i)}, \omega_{\sigma(j)})$ have the same colour, and hence $A_{i,j} = A_{\sigma(i),\sigma(j)}$ for all $i$ and $j$. 
    Therefore, equation (\ref{eq:GroupEquation}) gives that $\lambda_i = \lambda_j$ for all $i$ and $j$, so $P$ is a scaling of the permutation matrix corresponding to $\sigma$.
    If $\ol{\sigma} \notin G$, then there exists $\omega_r, \omega_s \in \Omega$ such that the edges $(\omega_r,\omega_s)$ and $(\omega_{\sigma(r)},\omega_{\sigma(s)})$ are different colours in $D$. 
    So, by the definition of $A$, $A_{r,s} \neq A_{\sigma(r),\sigma(s)},$ so let $A_{r,s} = a$ and $A_{\sigma(r),\sigma(s)} = b$.

    \par Remark that, by definition of $A$, if $\{\omega_i,\omega_j\} \subseteq \Delta_l$ for some $l$, then the $i$-th and $j$-th row (resp. column) contain the same entries, but if $\{\omega_i,\omega_j\} \not\subseteq \Delta_l$ for any $l$, then the rows (resp. columns) have no non-zero entries in common.

    \par Suppose $\{\omega_s,\omega_{\sigma(s)}\} \subseteq \Delta_l$ for some $1 \leq l \leq k$. Then, the columns $s$ and $\sigma(s)$ contain the same entries.
    Thus, there exists $1 \leq p \leq n$ such that $A_{p,s} = b$ and $A_{\sigma(p),\sigma(s)} \neq b$, as $A_{r,s} \neq b$ and $A_{\sigma(r),\sigma(s)} = b$. 
    By equation (\ref{eq:GroupEquation}), $\lambda_j = A_{\sigma(r),\sigma(j)} - A_{r,j} + \lambda_r$ for all $j$. So, again by equation (\ref{eq:GroupEquation}), when $i = p$,
    \begin{align*}
        \lambda_p +  A_{\sigma(p),\sigma(j)} &= A_{p,j} + (A_{\sigma(r),\sigma(j)} - A_{r,j} + \lambda_r) &&\text{for all } j \text{, and} \\
        \lambda_p + A_{\sigma(p),\sigma(s)} &= b + (b - a + \lambda_r),
    \end{align*}
    where the second equation is obtained by setting $j = s$.
    Putting these equations together, by equating $\lambda_p$, we obtain that
    \[ a + A_{\sigma(p),\sigma(s)} + A_{\sigma(r),\sigma(j)} + A_{p,j} = b + b + A_{r,j} + A_{\sigma(p),\sigma(j)} \text{ for all } j. \]
    Therefore, as $A_{\sigma(p),\sigma(s)}\neq b \neq a$ and the non-zero entries of $A$ form a basis for a free abelian group, we get that $A_{p,j} = b$ for all $j$. Giving a contradiction as $A_{p,p} = 0 \neq b$.
    
   Thus, $\{\omega_s,\omega_{\sigma(s)}\} \not\subseteq \Delta_l$ for any $1 \leq l \leq k$. Suppose the column $s$ contains $A_{p,s} \neq a,0$ for some $1 \leq p \leq n$, and let $c = A_{p,s}$. Then, by equation (\ref{eq:GroupEquation}), when $i \in \{p,r\}$ and $j = \sigma(s)$,
    \begin{align*}
        (c - A_{\sigma(p),\sigma(s)} + \lambda_s) +  A_{\sigma(p),\sigma^2(s)} &= A_{p,\sigma(s)} + \lambda_{\sigma(s)} \text{, and} \\
        (a - b +\lambda_s) + A_{\sigma(r),\sigma^2(s)} &= A_{r, \sigma(s)} + \lambda_{\sigma(s)}
    \end{align*}
    as $\lambda_i = A_{i,s} - A_{\sigma(i),\sigma(s)} + \lambda_s$ for all $i$, by (\ref{eq:GroupEquation}). Putting these equations together, by equating $\lambda_{\sigma(s)}$, we obtain that
    \[ a + A_{\sigma(r),\sigma^2(s)} + A_{\sigma(p),\sigma(s)} + A_{p,\sigma(s)} = b + c + A_{\sigma(p),\sigma^2(s)} + A_{r, \sigma(s)}.\]
    As $\omega_s$ and $\omega_{\sigma(s)}$ are in different $\Delta$, we have that $c \neq A_{i,\sigma(s)}$ for any $i$. Thus, $c = A_{\sigma(r),\sigma^2(s)}$ as the non-zero entries of $A$ form a basis for a free abelian group.
    Therefore, $\lambda_{\sigma(s)} = a - b + \lambda_s + c - A_{r, \sigma(s)}$ where $A_{r, \sigma(s)} \neq a,c$. 
    Then, by equation (\ref{eq:GroupEquation}) with $j = \sigma(s)$, we get that, for all $i$,
    \[ (A_{i,s} - A_{\sigma(i),\sigma(s)} + \lambda_s) + A_{\sigma(i),\sigma^2(s)} = A_{i,\sigma(s)} + (a - b + \lambda_s + c - A_{r, \sigma(s)}).\]
    Rearranging, we obtain that, for all $i$,
    \[ A_{i,s} + A_{\sigma(i),\sigma^2(s)} + b + A_{r,\sigma(s)} = a + c + A_{\sigma(i),\sigma(s)} + A_{i,\sigma(s)}.\]
    Thus, as the non-zero entries of $A$ form a basis for a free abelian group, $A_{i,s} \in \{a,c\}$ for all $i$, as $A_{r,\sigma(s)} \neq a,c$, giving a contradiction as each column contains a 0 entry. Thus, the $s$ column only contains $a$ and the one 0 entry. Similarly, it can be shown that the $\sigma(s)$ column only contains $b$ and one 0 entry. Moreover, as $\{\omega_s,\omega_{\sigma(s)}\} \not\subseteq \Delta_l$ for any $1 \leq l \leq k$, each row cannot contain both $a$ and $b$. Thus, we obtain a contradiction when $n > 2$, as there must exist a row containing both.
    
    \par Therefore, $P \in G_A$ if and only if $P$ is a scaling of a tropical permutation matrix corresponding to an element of $G$, and hence $G_A \cong G \times \R$.
    \par Finally, as the above process works for all $1 \leq \alpha \leq z$, we have that $H_E$ is isomorphic to $\prod_{{\alpha}=1}^z ((\R \times G_{\alpha}) \wr \cS_{h_{\alpha}})$.
\end{proof}

Let $E \in M_n(\F\T)$ then, by \cite[Theorem 4.6]{HKDuality}, the $\cH$-class of $E$ in $M_n(\T)$ and $M_n(\F\T)$ are equal.
Thus, the classification of the maximal subgroups of $M_n(\F\T)$ now follows easily as a consequence of our main result.

\begin{corollary}
The maximal subgroups of $M_n(\F\T)$ are, up to isomorphism, exactly groups of the form $\R \times G$ where $G$ is a finite group with a 2-closed permutation representation on $n$ points.
\end{corollary}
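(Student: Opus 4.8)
The plan is to read both inclusions off the classification of maximal subgroups of $M_n(\T)$ in Theorem~\ref{thm:MaximalSubgroupsClassification}, exploiting that a finitary matrix has a connected associated bipartite graph. Since the maximal subgroups are precisely the $\cH$-classes of idempotents, and the cited adaptation of \cite[Theorem~4.6]{HKDuality} identifies the $\cH$-class of an idempotent $E\in M_n(\F\T)$ in $M_n(\F\T)$ with its $\cH$-class in $M_n(\T)$, it suffices to determine the groups $H_E$ for idempotents $E\in M_n(\F\T)$.

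For the forward inclusion, let $E\in M_n(\F\T)$ be an idempotent of rank $m$. By Lemma~\ref{lem:reduced} there is a finitary full-rank $Z\in M_{m\times m}(\F\T)$, obtained by selecting rows and columns of $E$, with $\Gamma_{H_Z}\cong\Gamma_{H_E}$; as $Z$ has no $-\infty$ entries, $\cB_Z$ is the complete bipartite graph and hence connected, so Theorem~\ref{thm:units} and Corollary~\ref{cor:embeddableisomorphism} give $H_E\cong G_Z\cong\R\times G$ for a finite group $G$, with a single factor and no wreathing. To see that $G$ is $2$-closed rather than merely paired $2$-closed, I would return to an actual idempotent: direct indecomposability of the column space is an isomorphism invariant and holds for $C(Z)$, so the full-rank idempotent $E'\in M_m(\T)$ with $C(E')\cong C(E)$ furnished by Proposition~\ref{prop:Idemreduced} also has $\cB_{E'}$ connected. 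Now $E'_{i,i}=0$ by \cite[Lemma~5.6]{GIMMUltimate}, and by Theorem~\ref{thm:GEisCommute} the elements of $G_{E'}$ are exactly the units commuting with $E'$; as in the proof of Theorem~\ref{thm:MaximalSubgroupsClassification}, the relation $PE'=E'P$ exhibits the same unit as realising both the row and the column symmetry, placing the associated automorphism of $\cB_{E'}$ on the diagonal of $\cS_m\times\cS_m$, so that $G$ is an ordinary $2$-closed group on $m$ points. Finally, since $2$-closure is preserved under adjoining fixed points, $G$ is $2$-closed on $n\ge m$ points.

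For the converse, let $G$ be $2$-closed on $n$ points. When $G$ is non-trivial or $n>2$, the explicit construction in the second half of the proof of Theorem~\ref{thm:MaximalSubgroupsClassification} already yields a finitary full-rank idempotent $A\in M_n(\F\T)$, with $0$ on the diagonal and entries in $[-1.1,-0.9]$ off it, satisfying $H_A\cong\R\times G$. The only cases in which that construction used a $-\infty$ entry are $G$ trivial with $n\le 2$; these I would instead realise by the finitary idempotents $(0)\in M_1(\F\T)$ and $\left(\begin{smallmatrix}0&a\\-a&0\end{smallmatrix}\right)\in M_2(\F\T)$ for any $a\neq 0$, each of which has $\cH$-class isomorphic to $\R$.

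The main obstacle is the passage from a paired $2$-closed group to an ordinary $2$-closed group. The single-factor form $\R\times G$ is visible already from the finitary reduct $Z$, but the symmetry that forces the row and column actions to agree is only available once one returns to a genuine full-rank idempotent $E'$; the argument must therefore keep $E'$ idempotent while simultaneously guaranteeing that $\cB_{E'}$ stays connected. The secondary bookkeeping of the rank $m$ against the ambient size $n$, resolved by fixed-point padding, is the other place where care is needed.
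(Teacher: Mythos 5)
Your argument follows the same route as the paper: both directions are read off Theorem~\ref{thm:MaximalSubgroupsClassification} and its proof, with the forward inclusion resting on the fact that a finitary matrix has connected associated bipartite graph (hence a single factor, no wreathing) and the converse on the explicit idempotent construction inside that theorem's proof. You unpack more of the internals than the paper does (the paper's proof is two sentences), and two of your additions are genuinely worthwhile: the fixed-point padding from $m$ to $n$ points, which the paper leaves implicit, and the observation that the theorem's construction uses a $-\infty$ entry exactly when $G$ is trivial with $n\le 2$, so that those cases need separate finitary idempotents (your $2\times 2$ example works, since its second column is a tropical scaling of the first, giving a rank-one idempotent with $\cH$-class $\R$; the all-zeros matrix would do as well).

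The one step you should not leave as stated is the claim that ``direct indecomposability of the column space is an isomorphism invariant,'' invoked to conclude that $\cB_{E'}$ is connected for the full-rank idempotent $E'$ supplied by Proposition~\ref{prop:Idemreduced}. Nothing in the paper establishes that connectedness of $\cB_A$ is determined by the isomorphism class of $C(A)$, and your proof genuinely needs $\cB_{E'}$ connected in order to conclude that the row and column actions coincide on a single orbit structure and hence that $G$ is $2$-closed rather than merely paired $2$-closed. The gap is easily repaired: the idempotent $E'$ produced from a finitary $E$ is itself finitary --- its columns lie in $C(Z)$ for the finitary full-rank reduction $Z$, so each column is a tropical combination of finite vectors and has no $-\infty$ entry unless it is identically $-\infty$, which full column rank forbids --- and a finitary square matrix automatically has connected $\cB_{E'}$. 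Alternatively you can simply cite the first half of the proof of Theorem~\ref{thm:MaximalSubgroupsClassification}, as the paper does, since that proof already carries out the $PE'=E'P$ diagonal argument component by component.
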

\begin{proof} 
    For any idempotent $E \in M_{n}(\F\T)$, $\cB_E$ contains exactly one connected component. Therefore, by Corollary~\ref{cor:embeddableisomorphism}, $H_E \cong \R \times G$ for some finite group $G$. Thus, by Theorem~\ref{thm:MaximalSubgroupsClassification}, $G$ has a 2-closed permutation representation on $n$ points.

    \par By the proof of Theorem~\ref{thm:MaximalSubgroupsClassification}, every group of the form $G \times \R$ where $G \leq \cS_n$ is a finite group with a 2-closed permutation representation on $n$ points appears as a maximal subgroup of $M_n(\F\T)$.
\end{proof}

As mentioned in the introduction, the previous result appeared in \cite{IJKTropicalGroups}, however the proof given there contains an error on page 193 where the authors introduce a unit matrix $M$ as part of their argument. There is an unfortunate typo in the definition of this matrix, but even when interpreting this as intended, the equation (5.1) is incorrect. The proof cannot be simply corrected by fixing this equation, since this part of the argument attempts to prove an incorrect claim namely, given an idempotent $E \in M_n(\F\T)$ with entries from $[-0.9,-1.1]$ chosen to generate a free abelian group, it is claimed that $H_E \cong \mathrm{Aut}(\cG_E) \times \R$: but this is false as the example below demonstrates. Nevertheless, the statement of \cite[Theorem~5.10]{IJKTropicalGroups} does turn out to be correct, and the proof of Theorem~\ref{thm:MaximalSubgroupsClassification} in the present article makes use of a similar construction to prove this.
\begin{example}
    Consider the following matrices
    \[ E = \begin{pmatrix}
        0 & a \\
        b & 0
    \end{pmatrix}, \text{ and }
    F = \begin{pmatrix}
        0 & a & c & a \\
        b & 0 & b & c \\
        c & a & 0 & a \\
        b & c & b & 0
    \end{pmatrix}\]
    where $a,b,c \in [-0.9,-1.1]$ are chosen to be generators of a free abelian group of rank 3. Then, $\mathrm{Aut}(\cG_E) \cong \{e\}$ and $\mathrm{Aut}(\cG_F) \cong \cS_2 \times \cS_2$.
    However, it can be easily seen that $PE = EP$ and $QF = FQ$ where
    \[ P = \begin{pmatrix}
        -\infty & a \\
        b & -\infty        
    \end{pmatrix} \text{ and } Q = \begin{pmatrix}
        -\infty & a & -\infty & -\infty \\
        -\infty & -\infty & b & -\infty \\
        -\infty & -\infty & -\infty & a \\
        b & -\infty & -\infty & -\infty
    \end{pmatrix}.\]
    In fact, it can be shown that $H_E \cong \cS_2 \times \R$ and $H_F \cong D_4 \times \R$, where $D_4$ is the dihedral group of order 8.
\end{example}
By correcting the proof of the above theorem, we regain the following corollary.
\begin{corollary}
    For any finite group $G$, $G \times \R$ arises as a maximal subgroup of $M_n(\F\T)$ for all $n \geq |G|$.
\end{corollary}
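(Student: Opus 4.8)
The plan is to read this off directly from the preceding corollary, which identifies the maximal subgroups of $M_n(\F\T)$ as precisely the groups $\R \times G$ for which $G$ admits a 2-closed permutation representation on $n$ points. It therefore suffices to prove that every finite group $G$ has a faithful 2-closed permutation representation on $n$ points for every $n \geq |G|$.

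First I would establish the base case $n = |G|$ using the left regular representation of $G$ on itself, which is faithful with image isomorphic to $G$. To verify 2-closedness, I would compute its orbits on ordered pairs: under left multiplication the orbit of $(x,y)$ is $\{(gx,gy) \colon g \in G\}$, and $(x,y)$ and $(x',y')$ lie in the same orbit exactly when $x^{-1}y = x'^{-1}y'$, so these orbits are indexed by the elements of $G$. If $\pi \in \cS_G$ preserves every such orbit, then $\pi(x)^{-1}\pi(y) = x^{-1}y$ for all $x,y$; taking $x$ to be the identity gives $\pi(y) = \pi(e)\,y$, so $\pi$ is left multiplication by $\pi(e)$ and already lies in the regular representation. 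Equivalently, in the graph language of this section, the colour-preserving automorphism group of the Cayley colour graph of $G$ (with the edge from $x$ to $y$ coloured by $x^{-1}y$) is exactly the regular representation.

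Next I would pass to arbitrary $n \geq |G|$ by adjoining $n - |G|$ fixed points. The key observation is that adjoining a single fixed point $*$ to any 2-closed group $H$ acting on a set $\Omega$ yields a 2-closed group on $\Omega \cup \{*\}$: since $(*,*)$ forms a singleton orbit, every orbit-preserving permutation fixes $*$, and its restriction to $\Omega$ preserves the orbits of $H$ on ordered pairs of $\Omega$, hence lies in $H$ by hypothesis; the mixed orbits are then automatically respected. Iterating, the regular representation together with $n - |G|$ fixed points is a faithful 2-closed representation of $G$ on $n$ points, and the preceding corollary yields that $G \times \R$ is a maximal subgroup of $M_n(\F\T)$.

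The only real content lies in the two short, classical closure computations; I expect no genuine obstacle, the substance being simply to record that regular representations are 2-closed and that this property persists under adjoining fixed points.
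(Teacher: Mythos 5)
Your proof is correct and is essentially the argument the paper intends: the paper gives no explicit proof here, deducing the statement from the preceding corollary (maximal subgroups of $M_n(\F\T)$ are exactly $\R\times G$ with $G$ 2-closed on $n$ points), and the missing classical ingredient is precisely what you supply, namely that the left regular representation is 2-closed (it is the automorphism group of the Cayley colour graph) and that 2-closedness is preserved when adjoining fixed points. Both of your closure computations are sound, so nothing further is needed.
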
 
This can be seen to be sharp by considering cyclic groups of prime degrees as their permutation degree is equal to their order. The authors of \cite{IJKTropicalGroups} state that the above bound is sharp for the alternating group of degree 4, $\mathrm{Alt}(4)$. However, $\mathrm{Alt}(4) \times \R$ arises as a maximal subgroup of $M_{10}(\F\T)$ as $\mathrm{Alt}(4)$ has a 2-closed permutation representation on 10 points, generated by the following permutations,
\[(1, 3, 2)(5, 10, 7)(6, 8, 9), \ (1, 4)(2, 3)(6, 10)(7, 8), \ (1, 3)(2, 4)(5, 9)(6, 10)\]
found using \textsc{Magma} \cite{Magma}.

We have now classified the maximal subgroups and the Sch\"utzenberger groups of $M_n(\T)$, thus it poses the question of whether there exist groups that appear as a Sch\"utzenberger groups of $M_n(\T)$ but do not appear as a maximal subgroup. 

\begin{corollary}
    There exists a finite group $G$ such that $G \times \R$ appears as Sch\"utzenberger group of $M_{16}(\F\T)$, but does not appear as a maximal subgroup of $M_{16}(\F\T)$.
\end{corollary}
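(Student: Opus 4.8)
The plan is to translate the statement into a purely group-theoretic separation and then exhibit an explicit group. By the classification of Sch\"utzenberger groups (Theorem~\ref{thm:SchuzClassification}), together with the padding afforded by Lemma~\ref{lem:reduced}, a group of the form $G \times \R$ occurs as a Sch\"utzenberger group of $M_{16}(\F\T)$ precisely when $G$ admits a paired $2$-closed representation on $n$ and $m$ points with $n,m \le 16$. On the other hand, by Theorem~\ref{thm:MaximalSubgroupsClassification}, and bearing in mind that every maximal subgroup of $M_{16}(\F\T)$ is also a maximal subgroup of $M_{16}(\T)$, the group $G \times \R$ occurs as a maximal subgroup only when $G$ admits an ordinary $2$-closed representation on at most $16$ points. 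So it suffices to produce a finite group $G$ that is paired $2$-closed on two sets of size at most $16$ yet has no $2$-closed representation on $\le 16$ points. The governing heuristic is that combining a paired representation on $(n,m)$ into a single coloured digraph on $\Omega \sqcup \Gamma$, colouring the two sides differently so that the bipartition is rigid, yields a $2$-closed representation on $n+m$ points; hence I must take $n,m \le 16$ but $n+m > 16$, and, crucially, arrange that the minimal $2$-closed degree of $G$ exceeds $16$.

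My candidate is $G = \mathrm{L}_3(3) = \mathrm{PSL}(3,3) = \mathrm{PGL}(3,3)$ (the last equality holding as $\gcd(3,3-1)=1$), realised through the projective plane $\mathrm{PG}(2,\F_3)$, which has $13$ points and $13$ lines. Let $\Omega$ be the points and $\Gamma$ the lines, and colour each pair $(\omega,\theta) \in \Omega \times \Gamma$ according to whether $\omega$ lies on $\theta$. Since $G$ is flag- and antiflag-transitive, it has exactly two orbits on $\Omega \times \Gamma$, and a pair $(\sigma,\tau) \in \cS_\Omega \times \cS_\Gamma$ preserves these orbits if and only if it preserves incidence, i.e.\ is a collineation. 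By the fundamental theorem of projective geometry the collineation group of $\mathrm{PG}(2,\F_3)$ is $\mathrm{PGL}(3,3) = G$ (there are no nontrivial field automorphisms of $\F_3$), while working with the two distinguished sides $\Omega$ and $\Gamma$ excludes the point--line dualities. Thus $G$ is paired $2$-closed on $(13,13)$. As $13 \le 16$, Theorem~\ref{thm:SchuzClassification} with a single factor, realised by the finitary matrix produced in Lemma~\ref{lem:SchuzOneDirection} and padded to size $16$ via Lemma~\ref{lem:reduced}, shows that $\R \times G$ is a Sch\"utzenberger group of $M_{16}(\F\T)$.

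It remains to show $\R \times G$ is not a maximal subgroup. Because $G$ is a non-abelian simple group, any identification of $\R \times G$ with a group $\prod_\alpha ((\R \times G_\alpha)\wr \cS_{h_\alpha})$ as in Theorem~\ref{thm:MaximalSubgroupsClassification} forces a single factor with $h=1$ and $G_1 \cong G$; so I need only verify that $G$ has no faithful $2$-closed representation on $\le 16$ points. The minimal faithful degree of $\mathrm{L}_3(3)$ is $13$, and among indices at most $16$ the only proper subgroups are the two classes of index-$13$ parabolics (point and line stabilisers): there is no subgroup of index $14$ or $15$, as neither divides $|G| = 5616 = 2^4\cdot 3^3\cdot 13$, and none of index $16$, since that needs a subgroup of order $351 = 3^3\cdot 13$, which does not occur because no maximal subgroup of $\mathrm{L}_3(3)$ has order divisible by $351$. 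Hence every faithful action of degree $\le 16$ consists of one degree-$13$ action together with at most three fixed points; each degree-$13$ action is $2$-transitive, so its $2$-closure is $\cS_{13}$, and adjoining fixed points cannot shrink this. Therefore no such action is $2$-closed, the minimal $2$-closed degree of $G$ exceeds $16$, and $\R \times G$ is not a maximal subgroup of $M_{16}(\F\T)$.

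The only substantial point is the last one, namely ruling out a $2$-closed representation of $G$ on at most $16$ points; this rests entirely on the low-degree subgroup structure of $\mathrm{L}_3(3)$ (minimal degree $13$, $2$-transitivity of the two degree-$13$ actions, and the absence of subgroups of index $14$, $15$, $16$), which are standard facts requiring no computer search. It is worth remarking that the same construction with the Fano plane makes $\mathrm{L}_3(2)$ paired $2$-closed on $(7,7)$ and exhibits the identical Sch\"utzenberger-versus-maximal separation in $M_n(\F\T)$ for $7 \le n \le 13$; however $\mathrm{L}_3(2)$ is itself $2$-closed on $7+7=14$ points and hence a genuine maximal subgroup once $n \ge 14$, which is precisely why passing to the order-$3$ plane, whose combined degree is $26$, is needed to force the separation at $n=16$.
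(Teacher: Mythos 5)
Your argument is correct, but it is a genuinely different construction from the one in the paper. The paper takes $G=\mathrm{Alt}(4)\times\mathrm{Alt}(4)$: it builds a $4\times 12$ matrix $A$ whose columns are the $\mathrm{Alt}(4)$-translates of a free-abelian vector, so that $\Gamma_{H_A}\cong\mathrm{Alt}(4)\times\R$, and then forms the $16\times 16$ block matrix $\bigl(\begin{smallmatrix}A&0\\0&A^T\end{smallmatrix}\bigr)$ to obtain $(\mathrm{Alt}(4)\times\mathrm{Alt}(4))\times\R$ as a Sch\"utzenberger group; the failure of $\mathrm{Alt}(4)\times\mathrm{Alt}(4)$ to be $2$-closed on $16$ points is then asserted (and is where the paper leans on case analysis/computation, in the same spirit as its \textsc{Magma}-assisted remark about $\mathrm{Alt}(4)$). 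You instead take the simple group $\mathrm{PSL}(3,3)$ with its paired $2$-closed representation on the $13$ points and $13$ lines of $\mathrm{PG}(2,\F_3)$ (via antiflag-transitivity and the Fundamental Theorem of Projective Geometry, noting that dualities are excluded because they do not lie in $\cS_\Omega\times\cS_\Gamma$), and then rule out any $2$-closed representation on at most $16$ points by pure subgroup-index arithmetic: the only faithful actions of degree at most $16$ are a $2$-transitive degree-$13$ action plus fixed points, whose $2$-closure is $\cS_{13}$. What your route buys is a completely computation-free and self-contained verification of the "not $2$-closed" half, and the rectangular/duality asymmetry of the paper's example is replaced by a single simple group, which also makes the reduction to a single wreath factor in Theorem~\ref{thm:MaximalSubgroupsClassification} immediate; what the paper's route buys is that the same block trick $\bigl(\begin{smallmatrix}A&0\\0&A^T\end{smallmatrix}\bigr)$ illustrates directly how the paired $2$-closed condition of Theorem~\ref{thm:SchuzClassification} can strictly exceed the $2$-closed condition of Theorem~\ref{thm:MaximalSubgroupsClassification} even for a fixed group acting on two sides of one matrix. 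Both proofs are valid; yours could even be recorded as an alternative, as it additionally shows the separation already occurs in $M_n(\F\T)$ for $13\le n\le 16$ with a single connected component.
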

\begin{proof}
Let $g_1\dots,g_{12}$ be the 12 distinct elements of $\mathrm{Aut}(4)$ and let $a,b,c,d \in \F\T$ generate a free abelian group of rank 4. Then, let $\mathrm{Alt}(4)$ act on the vector $V = (a,b,c,d)^T$ in the obvious way.
Define $A \in M_{4 \times 12}(\T)$ such that the $i$th column of $A$ is given by $g_i \cdot V$. Then, by the proof of Theorem~\ref{thm:SchuzClassification}, it can be seen that $\Gamma_A$ is isomorphic to $\mathrm{Alt}(4) \times \R$ as the action of any $g_i$ on $A$ only permutes the columns by definition. Now, consider the matrix
\[ B = \left(\begin{array}{c|c}
    A & 0 \\
    \hline
    0 & A^T
\end{array}\right)\]
where $0$ denotes that the entire block is equal to 0. Then, it can be shown that $\Gamma_B \cong (\mathrm{Alt}(4) \times \mathrm{Alt}(4)) \times \R$. However, $\mathrm{Alt}(4) \times \mathrm{Alt}(4)$ does not have a 2-closed permutation representation of 16 points. Therefore, by Theorem~\ref{thm:MaximalSubgroupsClassification}, $(\mathrm{Alt}(4) \times \mathrm{Alt}(4)) \times \R$ does not appear as a maximal subgroup of $M_{16}(\T)$.
\end{proof}

\begin{question}
    What is the minimum $n \in \N$ such that the above corollary holds for $M_n(\F\T)$ and $M_n(\T)$?
\end{question}

\bibliographystyle{plain}
\bibliography{Bib.bib}

\end{document}